\documentclass[10pt]{article}

\usepackage{amsmath}
\usepackage{amsthm}
\usepackage{amssymb}
\usepackage{enumitem}
\usepackage{multicol}
\usepackage{booktabs} 
\usepackage[active]{srcltx}
\usepackage{algorithm}
\usepackage{algpseudocode}
\usepackage{array}
\usepackage[dvipsnames,svgnames]{xcolor}
\usepackage{tikz}
\usetikzlibrary{positioning,arrows.meta}
\usepackage{graphicx}
\usepackage{subcaption}
\usepackage{pgfplots}
\usepackage{xcolor}
\usepackage{cancel}
\usepackage{hyperref}
\usepackage[top=35mm, bottom=25mm, left=20mm, right=20mm]{geometry}
\usepackage{comment}

\providecommand{\U}[1]{\protect \rule{.1in}{.1in}}
\newtheorem{theorem}{Theorem}

\newtheorem{corollary}[theorem]{Corollary}
\newtheorem{fact}[theorem]{Fact}

\newtheorem{definition}[theorem]{Definition}
\newtheorem{example}[theorem]{Example}

\newtheorem{lemma}[theorem]{Lemma}

\newtheorem{proposition}[theorem]{Proposition}
\newtheorem{remark}[theorem]{Remark}

\newtheorem{assumption}[theorem]{Assumption}

\newcommand\R{\mathbb{R}}
\newcommand\Rinf{\overline{\mathbb{R}}}
\newcommand\dom[1]{ \bs{{\rm dom}}(#1)} 
\newcommand\dist{ \bs{{\rm dist}}} 

\newcommand\ov[1]{\overline{#1}}
\newcommand{\xb}{\overline x}
\newcommand{\ip}[2]{\left\langle #1,#2\right\rangle}
\newcommand{\norm}[1]{\left\lVert #1 \right\rVert}
\newcommand\mb{\mathbf{B}}
\newcommand\bs[1]{\boldsymbol{#1}}
\newcommand{\eps}{\varepsilon}
\newcommand{\Jp}{J_p}
\newcommand\argmin[1]{\bs{\arg\min}_{#1}}
\newcommand\argmint[1]{\mathop{\bs{\arg\min}}\limits_{#1}}
\newcommand\Nz{\mathbb{N}_0}

\newcommand\inft[1]{\mathop{\bs{\inf}}\limits_{#1}}

\newcommand\maxt[1]{\mathop{\bs{\max}}\limits_{#1}}
\let\oldliminf\liminf
\renewcommand{\liminf}{\bs{\oldliminf}}
\let\oldlimsup\limsup
\renewcommand{\limsup}{\bs{\oldlimsup}}
\let\oldmax\max
\renewcommand{\max}{\bs{\oldmax}}
\let\oldmin\min
\renewcommand{\min}{\bs{\oldmin}}
\let\oldsup\sup
\renewcommand{\sup}{\bs{\oldsup}}
\let\oldinf\inf
\renewcommand{\inf}{\bs{\oldinf}}
\let\oldlim\lim
\renewcommand{\lim}{\bs{\oldlim}}

\title{{\bf Robust Learning Meets Quasar-Convex Optimization: Inexact High-Order Proximal-Point Methods}}
\author{Alireza Kabgani\thanks{Department of Mathematics, University of Antwerp, Antwerp, Belgium. Email: alireza.kabgani@uantwerp.be} 
\and
Felipe Lara\thanks{Instituto de Alta Investigación (IAI), Universidad de Tarapacá, Arica, Chile. Email: flarao@academicos.uta.cl}
\and
Masoud Ahookhosh\thanks{Department of Mathematics, University of Antwerp, Antwerp, Belgium. Email: masoud.ahookhosh@uantwerp.be},
}

\begin{document}


\maketitle

\begin{abstract}
Robust learning aims to maintain model performance under noise, corruption, and distributional shifts, which are prevalent in modern machine learning applications. This work shows that examples of robust learning problems can be formulated as (strongly) quasar-convex optimization problems, which admit a benign landscape with no saddle points. We then propose HiPPA, an inexact high-order proximal-point method that employs a model-value gap to control the inexactness of subproblem solutions. Notably, we prove global convergence of HiPPA to global minima and establish that it attains a (local) linear or superlinear convergence rate, depending on the regularization order and inexactness control. Our numerical experiments on robust feature-alignment distillation indicate strong empirical performance of HiPPA and results consistent with our theoretical findings.
\noindent 

{\small}

\medskip

\noindent{\small \emph{Keywords}: Robust learning, Nonsmooth and nonconvex optimization; Quasar-convex functions; High-order proximal-point method; (Super)linear convergence rate; Complexity}

\medskip

\noindent {\it Mathematics Subject Classification:} 49J52; 65K05; 90C26.
\end{abstract}

\section{Introduction}\label{sec:intro}
Robust learning is central to modern machine learning, aiming to train models that remain reliable under label noise, feature corruption, outliers, distribution shifts, and adversarial perturbations \cite{song2022learning}. Such challenges are pervasive, for example, in representation learning, where supervision and intermediate features may be imperfect, and standard empirical risk minimization, particularly with squared losses, often overfits corrupted observations. Robust methods address this through resilient loss functions (e.g., Huber, capped, ramp, trimmed, redescending losses \cite{Barron2019AdaptiveRobustLoss,Brooks2011Ramp,Huber1964,Wang2019CappedL1}), regularization, and distributionally robust optimization \cite{duchi2019variance}. These approaches are critical in high-stakes domains such as healthcare and in areas like vision and natural language processing (NLP), where noise and adversarial effects can undermine reliability. 
However, these models often give rise to nonsmooth and nonconvex objectives, making optimization more challenging and weakening both theoretical guarantees and empirical quality.

Identifying nonconvex function classes with favorable optimization landscapes is a key challenge in modern machine learning and data science. Among such classes, the class of {\it (strongly) quasar-convex functions} arises in the modeling of a wide range of problems across several application domains, including learning linear dynamical systems \cite{Hardt2018Gradient}, phase retrieval \cite{wang2023continuized}, empirical risk minimization \cite{lee2016optimizing}, machine learning \cite{farzin2025minimization,wang2023continuized}, and beyond. Although several studies have been carried out in the smooth setting (e.g., \cite{guminov2023accelerated,Hardt2018Gradient,Hermant2024Study,Hinder2020Near,wang2023continuized}), relatively little attention has been paid to the nonsmooth setting (e.g., \cite{Ahookhosh2026Quasar,brito2025extending}). This motivates a more detailed study of this class of functions. Notably, such functions often exhibit a benign optimization landscape, characterized by the absence of spurious local minima and saddle points, which enables efficient global optimization despite nonconvexity, cf. \cite{Ahookhosh2026Quasar}, which is favorable in machine learning applications. Accordingly, a natural question arises in the optimization and machine learning communities:

``{\it How can we design optimization algorithms that are both theoretically strong and computationally efficient for (strongly) quasar-convex objectives arising in robust learning?}''

To address this question, we first identify instances of robust learning in which the objective functions are (strongly) quasar-convex, which is defined as follows:
\begin{definition}[\textbf{(Strong) quasar-convexity}]\label{def:qcx}
Let $\kappa\in(0,1]$, $\gamma\ge 0$, and let $h:\R^n\to \Rinf$ be proper with convex domain and $X^\star:=\argmin{x\in\R^n}h(x)\neq\emptyset$. We say that $h$ is $(\kappa,\gamma)$-\textit{strongly quasar-convex} with respect to $\xb\in X^\star$ if, for every $x\in\dom h$ and every $\lambda\in[0,1]$,
\begin{equation}\label{eq:qcx-def}
    h\left(\lambda \xb +(1-\lambda)x\right) \le \kappa\lambda h(\xb)+(1-\kappa\lambda)h(x)
 -\lambda\left(1-\tfrac{\lambda}{2-\kappa}\right)\tfrac{\kappa\gamma}{2}\norm{x-\xb}^2.
\end{equation}
When $\gamma=0$, we say that $h$ is $\kappa$-\textit{quasar-convex} with respect to $\xb$.
\end{definition}
We then develop an inexact proximal-point method with a more flexible regularization term, given by
\begin{equation}\label{eq:ppa}
    x^{k+1}\in\argmint{y\in\R^n}\left\{h(y)+\tfrac{1}{p\beta}\norm{y-x}^{p}\right\},
\end{equation}
with $\beta>0$ and $p>1$, which is referred to as HiPPA. The key components of these two steps are summarized below, and they constitute the main contributions of this paper:
\begin{itemize}[leftmargin=*]
    \item {\bf (Robust learning meets strong quasar-convexity)} We show that instances of robust learning objectives satisfy strong quasar-convexity, including robust feature-alignment distillation and anisotropic robust model stitching (see Examples~\ref{ex:capped-feature-distillation} and \ref{ex:anisotropic-model-stitching}). Consequently, these problems exhibit a benign landscape (all local minima are global) with no saddle points (i.e., the objective accepts only an isolated global solution), along with an error bound, making this class of large-scale problems well-suited for scalable first-order optimization (See Section~\ref{sec:Motivationa_examples}).
    \item {\bf (Inexact high-order proximal-point methods)} We design an inexact high-order proximal-point method (HiPPA, Algorithm~\ref{alg:ihippa}) for minimizing (strongly) quasar-convex functions. In particular, we tailor a model-value gap criterion that controls the level of inexactness, which is highly flexible and user-friendly for practical use, i.e., resulting in an inexact HiPPA with a simple structure and low-memory requirements.    
    We first show that the sequence generated by HiPPA converges to a global minimizer of the objective. Notably, if $\gamma>0$ (i.e., strongly quasar-convex case), then, for $p\in (1,2]$, HiPPA exhibits local linear convergence with complexity $\mathcal{O}(\log(\varepsilon^{-1}))$, and for $p>2$, it achieves superlinear convergence with complexity $\mathcal{O}(\log(\log(\varepsilon^{-1})))$. Finally, our numerical experiments demonstrate the strong empirical performance of HiPPA, supporting the theoretical results (see Section~\ref {sec:inexatHiPPA}). To our knowledge, it is the first inexact proximal-point method tailored to quasar-convex problems, combining strong convergence with efficient performance.
\end{itemize}

\subsection{Related work}
{\bf Quasar-convex optimization.} 
Quasar-convex functions arising as objectives in many practical applications are inherently nonsmooth, as nonsmoothness naturally emerges from regularization (e.g., sparsity-inducing penalties) and robust loss formulations designed to mitigate the effect of noise and outliers; see, e.g., \cite{Ahookhosh2026Quasar,brito2025extending}. 
They generalize convexity while remaining a broad subclass of nonconvex functions that includes star-convex functions \cite{Ahookhosh2026Quasar,guminov2023accelerated,Hermant2024Study,Hinder2020Near,nesterov2006cubic}. They enjoy useful properties, such as first-order characterizations, calculus rules, error bounds, and the Polyak-{\L}ojasiewicz inequality, in both smooth \cite{guminov2023accelerated,Hardt2018Gradient,Hermant2024Study,Hinder2020Near,Pun2024,wang2023continuized} and nonsmooth settings \cite{Ahookhosh2026Quasar,brito2025extending}. Notably, they have a benign landscape, every local minimum is global, and no saddle points \cite[Section~3.3]{Ahookhosh2026Quasar}. Moreover, $(\kappa,\gamma)$-strong quasar-convexity with $\gamma>0$ ensures isolated global minimizers, making this class well-suited for first-order methods.
Subgradient methods \cite{davis2018subgradient,rahimi2024projected} and inexact proximal-point schemes \cite{brito2025extending,sra2012scalable} are standard optimization methods for such nonsmooth and nonconvex problems. Although subgradient methods are simple and general, they often converge slowly and are sensitive to conditioning; proximal-point methods are typically more stable and faster via implicit regularization.

{\bf Proximal-point methods and their inexact and high-order variants.}
The classical proximal-point method (PPA, \eqref{eq:ppa} with $p=2$) forms a powerful framework for nonsmooth optimization, based on iteratively solving regularized subproblems that stabilize the original objective. Originating in monotone operator theory, they have strong convergence guarantees even in nonsmooth and nonconvex settings, and can be interpreted as implicit gradient steps that improve robustness to conditioning; see, e.g., \cite{martinet1970regularisation,martinet1972determination,rockafellar1976monotone}. 
In practice, however, the resulting proximal subproblems are often too expensive to solve exactly. Inexact proximal-point methods address this issue by allowing approximate solutions under controlled accuracy requirements, thereby reducing the per-iteration computational burden while maintaining convergence guarantees; cf. \cite{barre2023principled,Dvurechenskii2022,liu2012implementable,rockafellar1976monotone,Salzo12,Solodov01}. 
This flexibility makes them particularly suitable for large-scale settings, where exact proximal evaluations are impractical, and forms the basis for several scalable first-order and operator-splitting schemes. 
High-order proximal-point methods and their inexact variants have recently attracted significant attention for their ability to capture detailed local geometry of the objective, cf. \cite{Ahookhosh2026Quasar,ahookhosh2025asymptotic,ahookhosh2024high,ahookhosh2025high,kabgani2025moreau,kabgani2024itsopt,kabgani2025itsdeal,kabgani2025fundamental,kabgani2025first,nesterov2023inexact}. This added flexibility allows for more effective steps, improved complexity, and a better balance between global regularization and local model fidelity, making high-order methods a natural and powerful proximal framework for modern large-scale optimization.


\section{Preliminaries and notation}\label{sec:preliminaries}
Throughout this paper, $\R^n$ denotes the $n$-dimensional \textit{Euclidean space} equipped with the \textit{inner product} $\langle \cdot,\cdot \rangle$ and the associated norm $\|\cdot\| := \sqrt{\langle \cdot,\cdot \rangle}$. 
The \textit{open ball} centered at $\ov{x}\in \R^n$ with radius $r>0$ 
is denoted by $\mb(\ov{x}, r)$.
The set of \textit{nonnegative integers} is denoted by $\Nz := \mathbb{N}\cup\{0\}$. 

Let $h: \mathbb{R}^{n} \rightarrow \overline{\mathbb{R}} := \mathbb{R}\cup\{+\infty\}$ be an extended-valued function. 
The \textit{effective domain} of $h$ is defined by
$\dom{h} := \{x \in \mathbb{R}^{n} : h(x) < +\infty\}$.
The function $h$ is said to be \textit{proper} if $\dom{h} \neq \emptyset$. 
We denote by $X^\star:=\argmin{\mathbb{R}^{n}} h$ the set of minimizers of $h$ and the minimal value by $h^\star$.
A function $h$ is \textit{lower semicontinuous} (lsc henceforth) at $\ov{x} \in \mathbb{R}^{n}$ if for every sequence $\{x^k\}_{k\in\Nz} \subseteq \mathbb{R}^{n}$ with $x^k \to \ov{x}$, one has
$h(\ov{x}) \leq \liminf_{k \to +\infty} h(x^k)$. 
A point $\widehat{x}\in\R^n$ is called a \textit{cluster point} of a sequence $\{x^k\}_{k\in\Nz}$ if there exists an infinite subset $J\subseteq\Nz$ such that $x^j\to \widehat{x}$ as $j\to \infty$ with $j\in J$.

Let $h:\R^n \to \overline{\R}$ be proper and locally Lipschitz around $x\in\dom h$.
The Clarke directional derivative of $h$ at $x$ in direction $d \in \mathbb{R}^n$ is defined by
\[
h^\circ (x; d) :=\mathop{\limsup}\limits_{\substack{y \to x \\ t\downarrow 0}}\frac{h(y+td) - h(y)}{t},
\]
 and the Clarke subdifferential of $h$ at $x$ is given by
 \begin{equation*}
  \partial^{C} h(x) := \{v \in \mathbb R^n: \, \langle v, d \rangle \le h^\circ(x; d), ~ \forall ~ d \in \mathbb R^n\},
 \end{equation*} 
By convention, $\partial^C h(x):=\emptyset$ for $x\notin\dom h$. Moreover,
\begin{equation}\label{prop:direc}
 h^{\circ} (x; d) = \maxt{\xi \in \partial^{C} h(x)} \langle \xi, d \rangle, \qquad \forall ~ d \in \mathbb R^{n}.
\end{equation} 
Moreover, we have
\[
\partial^C h(x)=\operatorname{cl}\operatorname{conv}\left\{v\in\mathbb R^n:\exists\, x^j\to x,\ h 
\text{ is differentiable at } x^j,\ \nabla h(x^j)\to v\right\},
\]
where $\operatorname{cl}\operatorname{conv}$ denotes the \textit{closed convex hull}. If, in addition, $h$ is convex, then the Clarke subdifferential coincides with the standard convex subdifferential.
 A point $x \in \dom h$ is called a Clarke critical point of $h$ if $0 \in \partial^{C} h(x)$.
We use the standard fact that, for locally Lipschitz functions, the Clarke subdifferential is locally bounded and outer semicontinuous; see \cite{Clarke1990,Rockafellar09}.

A function $h: \R^{n} \rightarrow \Rinf$ with convex domain is said to be:
\begin{enumerate}[label=(\textbf{\alph*}), font=\normalfont\bfseries, leftmargin=0.7cm]
\item\label{def:st:conv} \textit{strongly convex} on $\dom h$ if there exists $\gamma > 0$ such that, for all $x,y \in \dom h$ and all $\lambda \in [0,1]$,
 \begin{equation}\label{strong:convex}
  h(\lambda y + (1-\lambda)x) \leq \lambda h(y) + (1-\lambda) h(x) - \lambda (1 - \lambda) \frac{\gamma}{2} \lVert x - y \rVert^{2},
 \end{equation}

\item\label{def:st:star} \textit{strongly star-convex} with respect to $\ov{x}\in\argmin{\mathbb{R}^n} h$ if there exists $\gamma>0$ such that, for all $x\in\dom h$ and all $\lambda\in[0,1]$,
 \begin{equation}\label{strong:star-convex}
  h(\lambda \ov{x}+(1-\lambda)x) \leq \lambda h(\ov{x})+(1-\lambda)h(x)-\lambda(1-\lambda)\frac{\gamma}{2}\lVert x-\ov{x}\rVert^2.
 \end{equation}
\end{enumerate}
Setting $\gamma=0$ in \eqref{strong:convex} (resp. \eqref{strong:star-convex}) yields the notions of \textit{convexity} (resp. \textit{star-convexity}).
In particular, every (strongly) convex function is (strongly) star-convex with respect to any point $\ov{x}\in\dom h$, whereas the converse implication does not hold in general (see \cite{hadjisavvas2006handbook,Lara2022strongly,nesterov2006cubic}).

\subsection{Further properties of quasar-convexity}

We first recall a first-order characterization of strong quasar-convexity in terms of the Clarke subdifferential. This form is useful because it converts the geometric definition into inequalities involving generalized gradients.

\begin{fact}[{\bf Clarke first-order characterization}]\label{fact:char:clarke}\cite[Theorem~31]{Ahookhosh2026Quasar}
 Let $h: \mathbb{R}^{n} \to \overline{\mathbb{R}}$ be a proper function with convex domain, and let $\overline{x} \in \argmin{\mathbb R^n} h$. Assume that $h$ is locally Lipschitz on $\dom{h}$. Then $h$ is $(\kappa, \gamma)$-strongly quasar-convex with respect to $\overline{x}$, where $\gamma \ge 0$, if and only if
\begin{equation}\label{eq:foC}
 h(\overline x) \ge h(x) + \frac{1}{\kappa} \langle v, \overline x-x\rangle + \frac{\gamma}{2} \|x-\overline x\|^2, \qquad \forall ~ x \in \dom{h}, ~  \forall ~ v \in \partial^C h(x).
\end{equation}
\end{fact}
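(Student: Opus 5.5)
We prove the two implications separately, working along the segment $x_\lambda:=\lambda\xb+(1-\lambda)x=x+\lambda(\xb-x)$, $\lambda\in[0,1]$. Since $\dom h$ is convex, this segment lies in $\dom h$, so $h$ is locally Lipschitz along it. Write $d:=\xb-x$.

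\emph{Necessity.} Assume \eqref{eq:qcx-def} holds, and fix $x\in\dom h$ and $v\in\partial^C h(x)$. Subtract $h(x)$ from both sides of \eqref{eq:qcx-def} and divide by $\lambda>0$:
\[
\frac{h(x+\lambda d)-h(x)}{\lambda}\le \kappa\big(h(\xb)-h(x)\big)-\Big(1-\tfrac{\lambda}{2-\kappa}\Big)\tfrac{\kappa\gamma}{2}\|d\|^2 .
\]
The naive next step would be to let $\lambda\downarrow0$ and bound the left side below by $\langle v,d\rangle$. This fails: the Clarke derivative is an upper object, $\langle v,d\rangle\le h^\circ(x;d)$, so the lower Dini derivative of $h$ at $x$ in direction $d$ need not dominate $\langle v,d\rangle$. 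This is the main obstacle.

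To get around it, use the fact that \eqref{eq:qcx-def} holds at every point of $\dom h$, not only at $x$. The Clarke subdifferential is the closed convex hull of limits of gradients $\nabla h(x^j)$ taken at differentiability points $x^j\to x$. At such a point $y=x^j$, set $d_y:=\xb-y$ and apply the displayed inequality with $y$ in place of $x$. Letting $\lambda\downarrow0$ now gives
\[
\langle\nabla h(y),\xb-y\rangle\le\kappa\big(h(\xb)-h(y)\big)-\tfrac{\kappa\gamma}{2}\|y-\xb\|^2 ,
\]
because at a differentiability point the directional derivative equals $\langle \nabla h(y),d_y\rangle$. Letting $y=x^j\to x$, continuity of $h$ transfers this inequality to every limiting gradient $v$. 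The inequality is affine in $v$, so it also passes to the closed convex hull, i.e.\ to all of $\partial^C h(x)$. Dividing by $\kappa$ gives \eqref{eq:foC}.

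Two technical points need care here. First, the differentiability points $x^j$ must lie in $\dom h$. If $\dom h$ is open, or $x$ is interior, this is automatic. At boundary points I would rely on the gradient-limit representation in the form the paper uses for locally Lipschitz $h$ on $\dom h$.

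\emph{Sufficiency.} Assume \eqref{eq:foC}, and define $\varphi(\lambda):=h(x_\lambda)-h(\xb)\ge0$. Then $\varphi$ is Lipschitz on $[0,1]$, hence absolutely continuous and differentiable almost everywhere. By the Clarke chain rule, at any $\lambda$ where $\varphi$ is differentiable there is $v\in\partial^C h(x_\lambda)$ with
\[
\varphi'(\lambda)=-\langle v,x-\xb\rangle .
\]
Since $\xb-x_\lambda=(1-\lambda)(\xb-x)$, applying \eqref{eq:foC} at $x_\lambda$ and dividing by $1-\lambda$ gives, for $\lambda<1$,
\[
\varphi'(\lambda)\le-\frac{\kappa}{1-\lambda}\,\varphi(\lambda)-\frac{\kappa\gamma}{2}(1-\lambda)\|x-\xb\|^2 .
\]
This is a linear differential inequality for $\varphi$. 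Multiply by the integrating factor $(1-\lambda)^{-\kappa}$ and integrate from $0$ to $\lambda$. This gives an explicit upper bound on $\varphi(\lambda)$ in terms of $\varphi(0)=h(x)-h(\xb)$ and $\|x-\xb\|^2$.

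It then remains to show that this bound implies \eqref{eq:qcx-def}. That reduces to two elementary inequalities, expected to hold with the constant $\tfrac{1}{2-\kappa}$:
\[
(1-\lambda)^{\kappa}\le 1-\kappa\lambda ,
\]
and a companion inequality comparing $(1-\lambda)^{\kappa}-(1-\lambda)^{2}$ with $(2-\kappa)\lambda\big(1-\tfrac{\lambda}{2-\kappa}\big)$. I expect the second to need a short convexity or Bernoulli-type argument.
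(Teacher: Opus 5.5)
You have the necessity direction right. Applying the definition at nearby differentiability points $y$ with the moving direction $\xb-y$, passing to limiting gradients, and then to the closed convex hull (the constraint is affine in $v$) is the correct way around the fact that $\partial^C h$ is an upper object. Your boundary worry is moot: $\partial^C h(x)$ as defined in the paper requires $h$ to be Lipschitz around $x$, which forces $\dom{h}$ to be open. The differential inequality and the integrating-factor step are also correct. With $c:=\frac{\kappa\gamma}{2(2-\kappa)}\norm{x-\xb}^2$ they give
\[
\varphi(\lambda)\le(1-\lambda)^{\kappa}\varphi(0)-c\bigl[(1-\lambda)^{\kappa}-(1-\lambda)^{2}\bigr],\qquad \lambda\in[0,1).
\]
The gap is in the final reduction. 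The target \eqref{eq:qcx-def} reads $\varphi(\lambda)\le(1-\kappa\lambda)\varphi(0)-c\,\lambda(2-\kappa-\lambda)$, so your companion inequality would have to be $(1-\lambda)^{\kappa}-(1-\lambda)^{2}\ge\lambda(2-\kappa-\lambda)$. This is false for every $\kappa\in(0,1)$. Using $\lambda(2-\kappa-\lambda)+(1-\lambda)^2=1-\kappa\lambda$,
\[
\lambda(2-\kappa-\lambda)-\bigl[(1-\lambda)^{\kappa}-(1-\lambda)^{2}\bigr]=1-\kappa\lambda-(1-\lambda)^{\kappa}=:g(\lambda)>0\quad\text{for }\lambda\in(0,1].
\]
So the companion is your first Bernoulli inequality with the sign reversed; at $\lambda=1$ it would assert $0\ge1-\kappa$. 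No termwise comparison can work. At $\lambda=1$ the definition is exactly the growth bound $h(x)-h(\xb)\ge c$, while your integrated estimate degenerates there to $0\le0$.

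The missing idea is to let the slack in the first term pay for the deficit in the second. The target right-hand side $(1-\kappa\lambda)\varphi(0)-c\,\lambda(2-\kappa-\lambda)$ minus your bound equals $g(\lambda)\bigl(\varphi(0)-c\bigr)$. It therefore suffices to show $h(x)-h(\xb)\ge\frac{\kappa\gamma}{2(2-\kappa)}\norm{x-\xb}^2$. You cannot quote \eqref{eq:quad-growth} for this, because Fact~\ref{fact:error-bounds} presupposes the strong quasar-convexity you are proving. It does follow from your own bound together with $\varphi\ge0$, which you recorded but never used. Dividing by $(1-\lambda)^{\kappa}$ gives $0\le\varphi(0)-c\bigl[1-(1-\lambda)^{2-\kappa}\bigr]$, and letting $\lambda\uparrow1$ yields $\varphi(0)\ge c$. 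With this, \eqref{eq:qcx-def} holds on $[0,1)$. At $\lambda=1$ it reduces to $0\le(1-\kappa)\bigl(\varphi(0)-c\bigr)$, which also holds. For $\gamma=0$ or $\kappa=1$ your termwise argument was already complete.
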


The next consequence explains why the quasar-convexity guarantees a benign nonsmooth landscape: Clarke criticality cannot occur away from the set of global minimizers.
\begin{fact}[\textbf{Clarke critical points are global minimizers}]\label{crit:are:glob}\cite[Proposition~33]{Ahookhosh2026Quasar}
 Let $h:\mathbb{R}^n \to \overline{\mathbb{R}}$ be a proper function with convex domain, and let $\overline{x} \in \argmin{\mathbb{R}^n} h$. Assume that $h$ is $(\kappa,\gamma)$-strongly quasar-convex with respect to $\overline{x}$, where $\gamma \ge 0$, and that $h$ is locally Lipschitz on $\dom h$. If $0 \in \partial^C h(x)$, then $x \in \argmin{\mathbb{R}^{n}} h$.
\end{fact}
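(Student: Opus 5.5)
We would derive this directly from the Clarke first-order characterization in Fact~\ref{fact:char:clarke}. Since $h$ is proper, has convex domain, is locally Lipschitz on $\dom h$, and is $(\kappa,\gamma)$-strongly quasar-convex with respect to $\xb\in\argmin{\mathbb R^n} h$, inequality \eqref{eq:foC} holds for every $x\in\dom h$ and every $v\in\partial^C h(x)$. So we fix a point $x$ with $0\in\partial^C h(x)$. By the convention $\partial^C h(x)=\emptyset$ outside the domain, this point automatically lies in $\dom h$, so the characterization applies at $x$.

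Next we substitute $v=0$ into \eqref{eq:foC}. The inner-product term vanishes, which leaves
\[
h(\xb)\ \ge\ h(x)+\tfrac{\gamma}{2}\norm{x-\xb}^2\ \ge\ h(x),
\]
where the last inequality uses $\gamma\ge 0$. Since $\xb$ is a global minimizer, we also have $h(x)\ge h(\xb)$. Hence $h(x)=h(\xb)=h^\star$, and therefore $x\in\argmin{\mathbb R^n} h$.

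As a by-product, when $\gamma>0$ the same inequality gives $\tfrac{\gamma}{2}\norm{x-\xb}^2\le 0$, so $x=\xb$. This is consistent with the remark that strong quasar-convexity forces an isolated minimizer, although the statement does not require it.

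There is no real obstacle in this argument. The only point that needs care is confirming that Fact~\ref{fact:char:clarke} is valid at a point $x\in\dom h$, which the domain convention guarantees. If one wanted to avoid relying on the characterization, an alternative route would go through the definition itself. One would use \eqref{prop:direc} with $d=\xb-x$ to get $h^\circ(x;\xb-x)\ge 0$. One would then compare this with the upper bound on the difference quotient $\bigl(h(x+\lambda(\xb-x))-h(x)\bigr)/\lambda\le -\kappa(h(x)-h(\xb))-\ldots$ that \eqref{eq:qcx-def} provides. The delicate step in that route is passing from the Clarke derivative, which is a limsup over base points $y\to x$, to the one-sided quotient at $x$ itself. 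This is exactly the work already encapsulated in Fact~\ref{fact:char:clarke}, so invoking the Fact is the cleaner choice.
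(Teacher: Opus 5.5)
Your argument is correct. The paper imports this Fact from \cite{Ahookhosh2026Quasar} without proof, and your derivation (placing $x$ in the domain via the empty-subdifferential convention, then substituting $v=0$ into \eqref{eq:foC} from Fact~\ref{fact:char:clarke}) is the immediate route; it is the same mechanism the paper uses in Proposition~\ref{prop:residual-gap} with $r=w=0$, and your side remark that $\gamma>0$ forces $x=\xb$ is also right.
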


When $\gamma>0$, strong quasar-convexity also provides quantitative control near the minimizer. The subsequent growth condition and error-bound estimate will be used later to convert residual information into distance and function-value bounds.
\begin{fact}[{\bf Quadratic growth and error bound}]\label{fact:error-bounds}\cite[Corollary~32]{Ahookhosh2026Quasar}
Let $h:\mathbb{R}^n \to \overline{\mathbb{R}}$ be a proper function and let $\overline x \in \argmin{\mathbb{R}^n} h$. Assume that $h$ is $(\kappa, \gamma)$-strongly quasar-convex  with respect to $\overline{x}$, where $\kappa \in (0,1]$ and $\gamma>0$, and that $h$ is locally Lipschitz on $\dom h$. Then, for every $x \in \dom{h}$, it holds that
\begin{equation}\label{eq:quad-growth}
 h(x)-h(\xb)\ge \frac{\kappa\gamma}{2(2-\kappa)}\norm{x-\xb}^2,
\end{equation}
and
\begin{equation}\label{eq:error_bound}
 \norm{x-\xb}\le \frac{2}{\kappa\gamma}\dist(0,\partial^C h(x)).
\end{equation}
\end{fact}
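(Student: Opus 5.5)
Both inequalities come out of the Clarke first-order characterization in Fact~\ref{fact:char:clarke}, which we apply at the minimizer $\xb$ in two different ways. The quadratic growth \eqref{eq:quad-growth} follows directly from the definition \eqref{eq:qcx-def}. The error bound \eqref{eq:error_bound} follows from \eqref{eq:foC} combined with Cauchy--Schwarz.

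\textbf{Quadratic growth.} Fix $x\in\dom h$ and $\lambda\in(0,1]$. By convexity of the domain, $\lambda\xb+(1-\lambda)x\in\dom h$. Since $\xb$ is a global minimizer, the left-hand side of \eqref{eq:qcx-def} is at least $h(\xb)$. This gives
\[
h(\xb)\le \kappa\lambda h(\xb)+(1-\kappa\lambda)h(x)-\lambda\Big(1-\tfrac{\lambda}{2-\kappa}\Big)\tfrac{\kappa\gamma}{2}\norm{x-\xb}^2 .
\]
Rearranging yields
\[
\kappa\lambda\,(h(x)-h(\xb))\ge \lambda\Big(1-\tfrac{\lambda}{2-\kappa}\Big)\tfrac{\kappa\gamma}{2}\norm{x-\xb}^2 .
\]
Dividing by $\kappa\lambda>0$, we want to make the factor $1-\tfrac{\lambda}{2-\kappa}$ as large as possible over $\lambda\in(0,1]$. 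Letting $\lambda\downarrow0$ would give the stronger constant $\gamma/2$. The stated constant $\frac{\kappa\gamma}{2(2-\kappa)}$ is weaker, since $\frac{\kappa}{2-\kappa}\le 1$, so it is implied by the limiting case. Alternatively, taking $\lambda=1$ gives $\frac{1-\kappa}{2-\kappa}\cdot\frac{\gamma}{2}$, which is the wrong constant; hence I would use $\lambda\downarrow 0$. Once $x$ is fixed no limit issues arise, because the inequality holds for every $\lambda$ and its right-hand side is continuous in $\lambda$. This part needs neither local Lipschitzness nor the subdifferential.

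\textbf{Error bound.} Fix $x\in\dom h$. If $\partial^C h(x)=\emptyset$, the distance is $+\infty$ and there is nothing to prove. Otherwise, take any $v\in\partial^C h(x)$. Inequality \eqref{eq:foC} together with $h(\xb)\le h(x)$ gives
\[
0\ \ge\ h(x)-h(\xb)+\tfrac1\kappa\langle v,\xb-x\rangle+\tfrac\gamma2\norm{x-\xb}^2\ \ge\ -\tfrac1\kappa\norm{v}\norm{x-\xb}+\tfrac\gamma2\norm{x-\xb}^2 .
\]
If $x\ne\xb$, dividing by $\norm{x-\xb}$ yields $\norm{x-\xb}\le\frac{2}{\kappa\gamma}\norm v$. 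If $x=\xb$, the bound is trivial. Taking the infimum over $v\in\partial^C h(x)$ gives \eqref{eq:error_bound}.

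\textbf{Main obstacle.} The only delicate point is the growth constant. We must check that the $\lambda$-dependent factor in \eqref{eq:qcx-def} indeed dominates $\frac{\kappa}{2-\kappa}$ for a suitable admissible $\lambda$. The constant can also be recovered by feeding the error-bound argument into \eqref{eq:foC}, keeping the $h(x)-h(\xb)$ term instead of discarding it. If the $\lambda\downarrow0$ route raised concerns about whether \eqref{eq:qcx-def} applies at small $\lambda$, I would fall back on this second route.
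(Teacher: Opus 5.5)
Your error-bound argument is correct. It is the same Cauchy--Schwarz step the paper uses for \eqref{eq:residual-distbound} in Proposition~\ref{prop:residual-gap}, with $w=0$. The quadratic-growth part, however, rests on an algebra error. Moving $\kappa\lambda h(\xb)$ to the left in $h(\xb)\le \kappa\lambda h(\xb)+(1-\kappa\lambda)h(x)-\lambda\bigl(1-\tfrac{\lambda}{2-\kappa}\bigr)\tfrac{\kappa\gamma}{2}\norm{x-\xb}^2$ gives
\[
(1-\kappa\lambda)\bigl(h(x)-h(\xb)\bigr)\ge \lambda\Bigl(1-\tfrac{\lambda}{2-\kappa}\Bigr)\tfrac{\kappa\gamma}{2}\norm{x-\xb}^2,
\]
not $\kappa\lambda\bigl(h(x)-h(\xb)\bigr)\ge\cdots$. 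With the correct coefficient, letting $\lambda\downarrow0$ only yields the trivial bound $h(x)-h(\xb)\ge 0$. Moreover, your ``stronger constant $\gamma/2$'' is false. By the identity in the proof of Proposition~\ref{prop:master-family}, $h(z)=\mu\norm{z}^2$ is $(\tfrac12,6\mu)$-strongly quasar-convex with respect to $0$, with equality in \eqref{eq:qcx-def}. Yet $h(z)-h(0)=\mu\norm{z}^2<3\mu\norm{z}^2=\tfrac{\gamma}{2}\norm{z}^2$ for $z\neq0$. In this example $\tfrac{\kappa\gamma}{2(2-\kappa)}=\mu$, so the stated constant is sharp and cannot be obtained by weakening a larger one. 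Your remark that $\lambda=1$ gives ``the wrong constant'' is an artifact of the same slip.

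The repair is exactly the choice you discarded. The factor $\lambda\bigl(1-\tfrac{\lambda}{2-\kappa}\bigr)/(1-\kappa\lambda)$ is increasing on $[0,1)$ and tends to $\tfrac{1}{2-\kappa}$ as $\lambda\uparrow1$. For $\kappa<1$, put $\lambda=1$. The display becomes $(1-\kappa)\bigl(h(x)-h(\xb)\bigr)\ge\tfrac{1-\kappa}{2-\kappa}\tfrac{\kappa\gamma}{2}\norm{x-\xb}^2$, and dividing by $1-\kappa>0$ gives \eqref{eq:quad-growth}. For $\kappa=1$, divide by $1-\lambda>0$ to get $h(x)-h(\xb)\ge\lambda\tfrac{\gamma}{2}\norm{x-\xb}^2$, then let $\lambda\uparrow1$.

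Your proposed fallback also does not work as described. At the point $x$, \eqref{eq:foC} bounds $h(x)-h(\xb)$ from above, so keeping that term gives no lower bound. To extract growth from \eqref{eq:foC}, you would have to apply it along the whole segment $\xb+t(x-\xb)$. You would then integrate the resulting differential inequality for $t^{-\kappa}\bigl(h(\xb+t(x-\xb))-h(\xb)\bigr)$ over $t\in(0,1]$.
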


The next estimate is the main tool for converting an approximate stationarity condition into quantitative control of both the distance to the minimizer and the objective gap. In particular, it will be used with $w$ equal to the proximal regularization term and $r$ equal to the residual error of an inexact proximal step.

\begin{proposition}[\textbf{Residual-to-gap inequality}]\label{prop:residual-gap}
Let $h:\mathbb R^n\to\overline{\mathbb R}$ be proper, locally Lipschitz on $\dom h$, and $(\kappa,\gamma)$-strongly quasar-convex with respect to $\xb\in\argmin{\mathbb R^n} h$. 
Let $x\in\dom h$ and suppose that
\begin{equation}\label{eq:residual-inclusion-static}
 r\in \partial^C h(x)+w,
\end{equation}
for some $r,w\in\R^n$. Then,
\begin{equation}\label{eq:residual-gap}
 \kappa\left(h(x)-h(\xb)\right)+\frac{\kappa\gamma}{2}\norm{x-\xb}^2 \le \ip{w}{\xb-x}+\norm{r}\,\norm{x-\xb}.
\end{equation}
If moreover $\gamma>0$, then
\begin{equation}\label{eq:residual-distbound}
 \norm{x-\xb}\le \frac{2}{\kappa\gamma}\left(\norm{w}+\norm{r}\right).
\end{equation}
Furthermore,
\begin{equation}\label{eq:residual-gap-loose}
 h(x)-h(\xb) \le \frac{\norm{w}+\norm{r}}{\kappa}\,\norm{x-\xb} \le \frac{2}{\kappa^2\gamma}\left(\norm{w}+\norm{r}\right)^2.
\end{equation}
Moreover, the sharper bound
\begin{equation}\label{eq:residual-gap-sharp}
 h(x)-h(\xb) \le \frac{1}{2\kappa^2\gamma}\left(\norm{w}+\norm{r}\right)^2
\end{equation}
holds.
\end{proposition}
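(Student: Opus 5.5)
The plan is to reduce everything to the Clarke first-order characterization, Fact~\ref{fact:char:clarke}, applied to the particular subgradient $v:=r-w$. By \eqref{eq:residual-inclusion-static}, $v\in\partial^C h(x)$. Inserting it into \eqref{eq:foC} and multiplying by $\kappa>0$ gives
\[
\kappa\left(h(x)-h(\xb)\right)+\frac{\kappa\gamma}{2}\norm{x-\xb}^2\le \ip{r-w}{x-\xb}=\ip{w}{\xb-x}+\ip{r}{x-\xb}.
\]
Cauchy--Schwarz on the last term then yields \eqref{eq:residual-gap}. This step is essentially immediate; the only care needed is the sign bookkeeping when passing from $\ip{v}{\xb-x}$ to $\ip{r-w}{x-\xb}$.

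For the remaining bounds, set $d:=\norm{x-\xb}$ and $s:=\norm{w}+\norm{r}$. Bounding $\ip{w}{\xb-x}\le\norm{w}\,d$ turns \eqref{eq:residual-gap} into
\[
\kappa\left(h(x)-h(\xb)\right)+\frac{\kappa\gamma}{2}d^2\le s\,d.
\]
We will use this scalar inequality for everything that follows, with the case $d=0$ trivial throughout.

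\textbf{Distance bound.} Assume $\gamma>0$ for \eqref{eq:residual-distbound}. Since $\xb$ is a global minimizer, $h(x)-h(\xb)\ge0$, so the first term can be dropped. This gives $\frac{\kappa\gamma}{2}d^2\le s\,d$, and dividing by $d>0$ gives $d\le \frac{2s}{\kappa\gamma}$.

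\textbf{Loose gap bound.} For \eqref{eq:residual-gap-loose}, drop the nonnegative term $\frac{\kappa\gamma}{2}d^2$ instead. This gives $h(x)-h(\xb)\le \frac{s}{\kappa}d$. Substituting the distance bound $d\le 2s/(\kappa\gamma)$ gives $\frac{2s^2}{\kappa^2\gamma}$. Here I note that the second inequality implicitly requires $\gamma>0$.

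\textbf{Sharp gap bound.} For \eqref{eq:residual-gap-sharp}, keep both terms and write
\[
\kappa\left(h(x)-h(\xb)\right)\le s\,d-\frac{\kappa\gamma}{2}d^2\le \max_{t\ge0}\left\{s\,t-\frac{\kappa\gamma}{2}t^2\right\}=\frac{s^2}{2\kappa\gamma}.
\]
The maximum is attained at $t=s/(\kappa\gamma)$. Dividing by $\kappa$ gives the claim.

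I expect no real obstacle. The main things to check are that \eqref{eq:foC} is used with the correct orientation of the inner product, and that the hypothesis $\gamma>0$ (needed for the last three bounds) and the nonnegativity of the gap are invoked at the right places.
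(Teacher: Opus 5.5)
Your proposal is correct and follows essentially the same route as the paper. The paper also applies \eqref{eq:foC} to the subgradient $v=r-w\in\partial^C h(x)$ and uses Cauchy--Schwarz to get \eqref{eq:residual-gap}, then drops the nonnegative gap term (respectively the quadratic term) to get the distance and loose bounds, and maximizes $s\,d-\frac{\kappa\gamma}{2}d^2$ over $d\ge 0$ for the sharp bound.
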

\begin{proof}
Let us choose $v\in\partial^C h(x)$ such that $r=v+w$. It follows from \eqref{eq:foC} that
\[
 h(\xb)\ge h(x)+\frac{1}{\kappa}\ip{v}{\xb-x}+\frac{\gamma}{2}\norm{x-\xb}^2.
\]
Multiplying by $\kappa$ and rearranging, it can be concluded that
\[
 \kappa\left(h(x)-h(\xb)\right)+\frac{\kappa\gamma}{2}\norm{x-\xb}^2 \le \ip{v}{x-\xb}.
\]
Substituting $v=r-w$ in this inequality results in
\[
 \kappa\left(h(x)-h(\xb)\right)+\frac{\kappa\gamma}{2}\norm{x-\xb}^2 \le \ip{r-w}{x-\xb} = \ip{r}{x-\xb}+\ip{w}{\xb-x}.
\]
Using the Cauchy--Schwarz inequality on the first term yields \eqref{eq:residual-gap}.
Now, let us assume $\gamma>0$. Since $h(x)\ge h(\xb)$, the first term on the left-hand side of
\eqref{eq:residual-gap} is nonnegative. Hence
\[
 \frac{\kappa\gamma}{2}\norm{x-\xb}^2 \le \ip{w}{\xb-x}+\norm{r}\,\norm{x-\xb} \le \left(\norm{w}+\norm{r}\right)\norm{x-\xb},
\]
If $x=\xb$, the claim is trivial. Otherwise, dividing by $\norm{x-\xb}$ gives \eqref{eq:residual-distbound}.
Dropping the quadratic term in \eqref{eq:residual-gap} and applying the Cauchy--Schwarz inequality gives
\[
 \kappa\left(h(x)-h(\xb)\right) \le \left(\norm{w}+\norm{r}\right)\norm{x-\xb},
\]
which implies the first inequality in \eqref{eq:residual-gap-loose}. Combining this estimate with
\eqref{eq:residual-distbound} yields the second inequality in \eqref{eq:residual-gap-loose}.
For the sharper quadratic bound, set $d:=\norm{x-\xb}$ and $M:=\norm{w}+\norm{r}$. From \eqref{eq:residual-gap} and the Cauchy--Schwarz inequality, we obtain
\[
 \kappa\left(h(x)-h(\xb)\right)+\frac{\kappa\gamma}{2}d^2 \le Md,
\]
i.e.,
\[
 \kappa\left(h(x)-h(\xb)\right)\le Md-\frac{\kappa\gamma}{2}d^2.
\]
The right-hand side is maximized over $d\ge 0$ at $d=M/(\kappa\gamma)$, and the maximal value is $M^2/(2\kappa\gamma)$, i.e.,
\[
 h(x)-h(\xb) \le \frac{1}{2\kappa^2\gamma}\left(\norm{w}+\norm{r}\right)^2,
\]
which consequently adjusts the inequality \eqref{eq:residual-gap-sharp}.
\end{proof}

The previous estimate applies to any approximate stationarity condition. We now specialize it to the residual generated by a high-order proximal subproblem. This
gives a one-step stability inequality that links the proximal residual, the regularization displacement $y-x$, and the distance from the new point $y$ to the reference minimizer.
\begin{proposition}[\textbf{Stability of approximate proximal points}]\label{prop:stability-approx-prox}
Let $h:\mathbb R^n\to\overline{\mathbb R}$ be proper, locally Lipschitz on $\dom h$, and $(\kappa,\gamma)$-strongly quasar-convex with respect to $\xb\in\argmin{\mathbb R^n}h$. Fix $x\in\mathbb R^n$, $\beta>0$, and $p>1$. Suppose that $y\in\dom h$ satisfies
\begin{equation}\label{eq:prox-residual-static}
 e\in \partial^C h(y)+\frac{1}{\beta}\norm{y-x}^{p-2}(y-x),
\end{equation}
for some $e\in\R^n$. Then,
\begin{equation}\label{eq:stability-prox-signed}
 \kappa\left(h(y)-h(\xb)\right)+\frac{\kappa\gamma}{2}\norm{y-\xb}^2 \le \frac{1}{\beta}\norm{y-x}^{p-2}\ip{y-x}{\xb-y}+\norm{e}\,\norm{y-\xb}.
\end{equation}
Moreover,
\begin{equation}\label{eq:stability-prox}
 \kappa\left(h(y)-h(\xb)\right)+\frac{\kappa\gamma}{2}\norm{y-\xb}^2 \le \frac{1}{\beta}\norm{x-y}^{p-1}\norm{y-\xb}+\norm{e}\,\norm{y-\xb}.
\end{equation}
\end{proposition}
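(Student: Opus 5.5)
This is a direct specialization of Proposition~\ref{prop:residual-gap}. The plan is to apply that proposition at the point $y$ (in place of $x$) with residual $r:=e$ and with $w$ equal to the proximal regularization gradient, namely
\[
 w:=\frac{1}{\beta}\norm{y-x}^{p-2}(y-x).
\]
With this choice, the hypothesis \eqref{eq:prox-residual-static} is exactly the inclusion \eqref{eq:residual-inclusion-static}, i.e.\ $r\in\partial^C h(y)+w$. Since $y\in\dom h$ and $h$ satisfies all the standing assumptions of Proposition~\ref{prop:residual-gap}, inequality \eqref{eq:residual-gap} yields
\[
 \kappa\left(h(y)-h(\xb)\right)+\frac{\kappa\gamma}{2}\norm{y-\xb}^2\le \ip{w}{\xb-y}+\norm{e}\,\norm{y-\xb}.
\]
Substituting the definition of $w$ and pulling the scalar $\frac{1}{\beta}\norm{y-x}^{p-2}$ out of the inner product gives \eqref{eq:stability-prox-signed} directly.

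For \eqref{eq:stability-prox}, I will bound the signed term with Cauchy--Schwarz:
\[
 \frac{1}{\beta}\norm{y-x}^{p-2}\ip{y-x}{\xb-y}\le \frac{1}{\beta}\norm{y-x}^{p-1}\norm{y-\xb}.
\]
Using $\norm{y-x}=\norm{x-y}$, this is exactly the stated bound.

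The only delicate point is the case $y=x$ when $1<p<2$. There $\norm{y-x}^{p-2}$ is formally undefined, so I will adopt the convention that $\norm{y-x}^{p-2}(y-x):=0$ at $y=x$. This is the continuous extension, since its norm $\norm{y-x}^{p-1}$ tends to $0$, and it coincides with the gradient of $\frac{1}{p}\norm{\cdot-x}^p$ at $x$. With $w=0$ in that case, both sides of each inequality remain consistent: the prox term vanishes on the right-hand side, and the conclusion reduces to \eqref{eq:residual-gap} with $w=0$. Apart from this convention, no step should pose any obstacle.
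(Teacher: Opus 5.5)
Your proposal is correct and follows the paper's argument. The paper re-derives the bound directly from the Clarke characterization \eqref{eq:foC} with $v=e-\frac{1}{\beta}\norm{y-x}^{p-2}(y-x)$, which is exactly the computation behind Proposition~\ref{prop:residual-gap} with your choice of $w$, and then applies the same Cauchy--Schwarz step. Your convention at $y=x$ for $p<2$ is a sensible clarification that the paper leaves implicit.
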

\begin{proof}
Let us choose $v\in\partial^C h(y)$ such that
\[
 e=v+\frac{1}{\beta}\norm{y-x}^{p-2}(y-x).
\]
Applying the Clarke characterization \eqref{eq:foC} at $y$ and rearranging ensures
\[
 \kappa\left(h(y)-h(\xb)\right)+\frac{\kappa\gamma}{2}\norm{y-\xb}^2 \le \ip{v}{y-\xb}.
\]
Substituting $ v=e-\tfrac{1}{\beta}\norm{y-x}^{p-2}(y-x)$ into the last inequality, we come to
\[
 \kappa\left(h(y)-h(\xb)\right)+\frac{\kappa\gamma}{2}\norm{y-\xb}^2 \le \ip{e}{y-\xb}+\frac{1}{\beta}\norm{y-x}^{p-2}\ip{y-x}{\xb-y},
\]
Applying the Cauchy-Schwarz inequality to the first term leads to \eqref{eq:stability-prox-signed}.
Employing the Cauchy-Schwarz inequality again on the proximal term results in \eqref{eq:stability-prox}.
\end{proof}

\section{Motivational examples from robust learning}
\label{sec:Motivationa_examples}
In this section, we study two nonsmooth and nonconvex applications arising in robust learning.
We show that they belong to the class of strongly quasar-convex functions while failing to be star-convex. These two problems are: (i) {\it robust feature-alignment distillation} and (ii) {\it anisotropic robust model stitching}. They are built from two robust alignment templates: a stabilized capped loss, inspired by capped and truncated losses used to reduce the influence of outliers, and a weighted kinked radial loss, which permits direction-dependent penalties in representation space. After composition with linear or affine alignment maps, these templates produce robust alignment objectives motivated by neural representation learning. The resulting functions are locally Lipschitz, nonsmooth, nonconvex, and non-star-convex, yet retain a one-point geometry toward a reference minimizer.

\subsection{Stabilized capped residual loss}
The capped and truncated losses are widely used to reduce the influence of large outliers. For example, the capped $L_1$-type losses have been used in robust SVM-type formulations, where they lead to nonsmooth and nonconvex optimization problems. Similarly, the ramp loss is a standard robust nonconvex classification loss; see \cite{Huang2014RampLossSVM,Yang2023CappedL1PTSVM}. More broadly, adaptive robust losses in vision interpolate among classical losses such as Charbonnier, pseudo-Huber, Cauchy, Geman--McClure, Welsch, and squared losses; see \cite{Barron2019AdaptiveRobustLoss}. Motivated by these robust-loss mechanisms, we introduce the following stabilized master families.

Let us fix $\tau,\mu>0$ and define $\Phi_{\tau,\mu}:\R^d\to\R$ as
\begin{equation}\label{eq:capped_residual}
\Phi_{\tau,\mu}(z):=\min\{\|z\|,\tau\}+\mu\|z\|^2.
\end{equation}
The capped term limits the effect of large residuals, while the quadratic term provides a global stabilizing pull toward the origin. The following result shows that this standard robust-loss mechanism, after stabilization, yields a locally Lipschitz, nonsmooth, nonconvex, and non-star-convex objective that is still strongly quasar-convex.
\begin{proposition}[\textbf{Stabilized capped residual loss}]\label{prop:stabilized-capped-loss}
Let $\tau,\mu>0$ satisfy $0<\mu\tau<1/2$. Then $\Phi_{\tau,\mu}$ defined in \eqref{eq:capped_residual} is locally Lipschitz,  nonsmooth, and non-star-convex with respect to the origin. Moreover, it is $(\kappa,\gamma)$-strongly quasar-convex with respect to $\ov z=0$, where one may take $\gamma=\mu$ and $\kappa=\frac{\mu\tau}{1+2\mu\tau}$.
\end{proposition}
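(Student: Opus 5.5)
We verify the three qualitative properties directly, and obtain strong quasar-convexity from the Clarke first-order characterization (Fact~\ref{fact:char:clarke}) rather than from Definition~\ref{def:qcx}. Throughout we write $\Phi_{\tau,\mu}(z)=g(\|z\|)$ with the radial profile $g(t):=\min\{t,\tau\}+\mu t^2$ for $t\ge 0$.

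\textbf{Local Lipschitzness and nonsmoothness.} Local Lipschitzness is immediate, since $\Phi_{\tau,\mu}$ is a sum and minimum of locally Lipschitz functions. Nonsmoothness follows from the kink of $\|z\|$ at $z=0$, where $g'(0^+)=1\neq 0$, and from the kink on the sphere $\|z\|=\tau$, where the one-sided derivatives of $g$ are $1+2\mu\tau$ and $2\mu\tau$.

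\textbf{Failure of star-convexity at $0$.} We restrict to a ray $z=s u$ with $\|u\|=1$. Star-convexity would require $g((1-\lambda)s)\le (1-\lambda)g(s)$ for all $s\ge 0$ and $\lambda\in[0,1]$. Take $s>\tau$ and choose $\lambda$ with $(1-\lambda)s=\tau$. The inequality then reads
\[
\tau+\mu\tau^2\le \tfrac{\tau}{s}(\tau+\mu s^2).
\]
Set $f(s):=\tau+\mu\tau^2-\tfrac{\tau^2}{s}-\mu\tau s$. Then $f(\tau)=0$ and $f'(\tau)=1-\mu\tau>0$, using $\mu\tau<1/2$. Hence $f(s)>0$ for $s$ slightly larger than $\tau$, so the inequality fails. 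This violates the star-convexity inequality \eqref{strong:star-convex} with $\gamma=0$; it also violates it for every $\gamma>0$, because the quadratic term only strengthens the requirement.

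\textbf{Strong quasar-convexity.} Since $\dom\Phi_{\tau,\mu}=\R^d$ and $0$ is the unique minimizer, Fact~\ref{fact:char:clarke} reduces the claim, with $\xb=0$, to showing
\[
g(\|z\|)+\tfrac{\mu}{2}\|z\|^2\le \tfrac1\kappa\ip{v}{z}\qquad \forall\, z,\ \forall\, v\in\partial^C\Phi_{\tau,\mu}(z).
\]
The case $z=0$ is trivial, since both sides vanish. For $z\neq 0$, set $t=\|z\|$. The Clarke subdifferential of the radial function is
\[
\partial^C\Phi_{\tau,\mu}(z)=\{(a+2\mu t)\,z/t\},
\]
where $a=1$ if $t<\tau$, $a=0$ if $t>\tau$, and $a\in[0,1]$ if $t=\tau$, by the convex-hull-of-limiting-gradients formula. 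Hence $\ip{v}{z}=at+2\mu t^2$, and the right-hand side is increasing in $a$. It therefore suffices to check two cases.
\begin{itemize}
\item[(i)] $a=1$, $t\le\tau$: we need $t+\tfrac32\mu t^2\le (t+2\mu t^2)/\kappa$. This holds because $\kappa\le 1$.
\item[(ii)] $a=0$, $t\ge\tau$: we need $\tau+\tfrac32\mu t^2\le 2\mu t^2/\kappa$, i.e. $\kappa\le 2\mu t^2/(\tau+\tfrac32\mu t^2)$. The right-hand side is increasing in $t$, so the worst case is $t=\tau$, which requires $\kappa\le 2\mu\tau/(1+\tfrac32\mu\tau)$.
\end{itemize}
The choice $\kappa=\mu\tau/(1+2\mu\tau)$ lies in $(0,1]$ and satisfies both conditions, so the characterization yields $(\kappa,\mu)$-strong quasar-convexity.

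The main point requiring care is the correct description of $\partial^C\Phi_{\tau,\mu}$ on the sphere $\|z\|=\tau$. There the capped term contributes the segment $\{\theta z/\tau:\theta\in[0,1]\}$, and one must observe that the worst element is $\theta=0$, which is what gives the binding constraint on $\kappa$ in case (ii).
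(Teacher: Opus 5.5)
Your proof is correct and takes essentially the same route as the paper: it uses the Clarke first-order characterization (Fact~\ref{fact:char:clarke}), splits into cases $\|z\|<\tau$ and $\|z\|\ge\tau$, and handles the sphere $\|z\|=\tau$ by taking the smallest radial slope, which gives $\langle v,z\rangle=2\mu\tau^2$. The only variation is the non-star-convexity step: the paper tests the explicit pair $\tau u$, $2\tau u$, which needs $\mu\tau<1/2$, whereas your local argument with $s\downarrow\tau$ needs only $\mu\tau<1$; your case (ii) also shows that any $\kappa\le 2\mu\tau/(1+\tfrac32\mu\tau)$ would work, a larger value than the paper's $\kappa$.
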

\begin{proof}
Local Lipschitz continuity follows from the Lipschitz continuity of $z\mapsto \|z\|$, the stability of locally Lipschitz functions under a pointwise minimum, and the smoothness of $z\mapsto \mu\|z\|^2$. The function is nonsmooth at $\ov z=0$ and along the sphere $\{z:\|z\|=\tau\}$. 
We next show that $\Phi_{\tau,\mu}$ is not star-convex with respect to $\ov z=0$. Fix a unit vector $u$ and set $z=2\tau u$. Then
\[
\Phi_{\tau,\mu}(\tau u)=\tau+\mu\tau^2,
\]
whereas
\[
\frac{1}{2}\Phi_{\tau,\mu}(2\tau u)=\frac{1}{2}(\tau+4\mu\tau^2)=\frac{\tau}{2}+2\mu\tau^2.
\]
Since $\mu\tau<1/2$, we have $\tau+\mu\tau^2>\frac{\tau}{2}+2\mu\tau^2$.
Thus
\[
\Phi_{\tau,\mu}\left(\frac{1}{2} z\right)>\frac{1}{2}\Phi_{\tau,\mu}(z),
\]
which violates the star-convexity with respect to $\ov z = 0$. 
It remains to prove strong quasar-convexity. We use Clarke's first-order characterization \eqref{eq:foC}. 
We claim that, with $\gamma=\mu$ and $\kappa=\frac{\mu\tau}{1+2\mu\tau}$, for every $z\in\R^d$ and every $v\in\partial^C\Phi_{\tau,\mu}(z)$,
\begin{equation}\label{eq1:prop:stabilized-capped-loss}
0\ge\Phi_{\tau,\mu}(z)+\frac1\kappa\langle v,-z\rangle+\frac{\gamma}{2}\|z\|^2.
\end{equation}
For $z=0$, this inequality is immediate. 
Let $z\neq0$ be arbitrary. Set $r:=\|z\|$, and write $u=z/r$.
If $0<r<\tau$, then $\Phi_{\tau,\mu}(z)=\|z\|+\mu\|z\|^2$ is continuously differentiable at $z$, and $\partial^C\Phi_{\tau,\mu}(z)=\{v_1:=(1+2\mu r)u\}$.
Since $0<\kappa\le1$,
\[
\langle v_1,z\rangle = r+2\mu r^2  \ge \kappa(r+\mu r^2)+\frac{\kappa\mu}{2}r^2=\kappa \left(\Phi_{\tau,\mu}(z)+\frac{\gamma}{2}\|z\|^2\right).
\]
Hence, the inequality \eqref{eq1:prop:stabilized-capped-loss} holds in this case.
If $r>\tau$, then $\Phi_{\tau,\mu}(z)=\tau+\mu\|z\|^2$ is continuously differentiable at $z$, and
$\partial^C\Phi_{\tau,\mu}(z)=\{v_2:=2\mu r u\}$, and consequently we come to
\[
\langle v_2,z\rangle=2\mu r^2.
\]
It remains to consider the kink $r=\tau$. In this case,
the Clarke subdifferential is the convex hull of the two radial limits, i.e., $\partial^C\Phi_{\tau,\mu}(z)=[2\mu r^2, r+2\mu r^2]$, i.e., the smallest possible value of $\langle v,z\rangle$ is
$2\mu\tau^2$. Consequently, it is sufficient to verify the estimate for $r\ge\tau$ using
the lower radial slope. 
We have
\[
\mu\tau\left(2-\frac{3\kappa}{2}\right)\ge \frac{\mu\tau}{1+2\mu\tau}=\kappa.
\]
Since $r\ge\tau$, this implies
\[
\mu\left(2-\frac{3\kappa}{2}\right)r^2\ge\kappa\tau.
\]
Consequently,  for every $v\in\partial^C\Phi_{\tau,\mu}(z)$ with $r\ge\tau$,
\[
\langle v,z\rangle \ge 2\mu r^2\ge \kappa(\tau+\mu r^2)+\frac{\kappa\mu}{2}r^2=\kappa \left(\Phi_{\tau,\mu}(z)+\frac{\gamma}{2}\|z\|^2\right).
\]
Equivalently, \eqref{eq1:prop:stabilized-capped-loss} hold. By Fact~\ref{fact:char:clarke}, $\Phi_{\tau,\mu}$ is $(\kappa,\gamma)$-strongly quasar-convex with respect to $\ov z =0$.
\end{proof}

\begin{example}[\textbf{Robust feature-alignment distillation}]
\label{ex:capped-feature-distillation}
Feature-based knowledge distillation trains a student model using intermediate
teacher representations as supervision; FitNets is a classical example in which
teacher hidden representations are used as hints for a thinner student
network~\cite{Romero2015FitNets}. Motivated by this setting, let
$Z_s=[z_{s,1},\ldots,z_{s,N}]\in\mathbb R^{d_s\times N}$ and
$Z_t=[z_{t,1},\ldots,z_{t,N}]\in\mathbb R^{d_t\times N}$ denote student and
teacher features collected from the same batch, respectively. For a trainable alignment map
$W\in\mathbb R^{d_t\times d_s}$, we define
\begin{equation}\label{eq:kd-loss}
h_{\rm KD}(W):=\frac1N\sum_{i=1}^N\Phi_{\tau,\mu}(Wz_{s,i}-z_{t,i}).
\end{equation}
The capped term reduces the influence of corrupted or mismatched teacher
features, while the quadratic term preserves the strong quasar-convex geometry. 
Assume that $Z_s$ has full row rank, that there exists $W^\star\in\mathbb R^{d_t\times d_s}$ such that $W^\star Z_s=Z_t$, and that $0<\mu\tau<1/2$. 
Let $\sigma_{\min}(Z_s)$ denote the smallest singular value of $Z_s$.
Then $h_{\rm KD}$ is locally Lipschitz, nonsmooth, and $(\kappa, \gamma)$-strongly quasar-convex with respect to $W^\star$, where
$\kappa=\frac{\mu\tau}{1+2\mu\tau}$ and $\gamma=\frac{\mu}{N}\sigma_{\min}(Z_s)^2$. 
\end{example}
\begin{proof}
Let us define
\[
R_i(W):=Wz_{s,i}-z_{t,i}, \qquad i=1,\ldots,N.
\]
By the realizability assumption $W^\star Z_s=Z_t$, we have $R_i(W^\star)=0$.
Moreover, 
\[
R_i(W)= (W-W^\star)z_{s,i}.
\]
The local Lipschitz continuity of $h_{\rm KD}$ follows from Proposition~\ref{prop:stabilized-capped-loss}. The nonsmoothness comes from the fact that $R_i(W^\star)=0$ for all $i$ and the residual loss $\Phi_{\tau,\mu}$ is nonsmooth at the origin. In particular, along any direction $\Delta W$ with $\Delta WZ_s\neq 0$, the restriction $t\mapsto h_{\rm KD}(W^\star+t\Delta W)$ has a kink at $t=0$.

It remains to verify the strong quasar-convexity. 
Let $W\in\mathbb R^{d_t\times d_s}$ and $\lambda\in[0,1]$, and set 
\[
W_\lambda:=\lambda W^\star+(1-\lambda)W.
\]
For every $i$, using $W^\star z_{s,i}=z_{t,i}$, it holds that
\[
R_i(W_\lambda)=W_\lambda z_{s,i}-z_{t,i}=(1-\lambda)(Wz_{s,i}-z_{t,i})=(1-\lambda)R_i(W).
\]
By Proposition~\ref{prop:stabilized-capped-loss}, $\Phi_{\tau,\mu}$ is $(\kappa,\mu)$-strongly quasar-convex with respect to
$\ov z=0$, where $\kappa=\frac{\mu\tau}{1+2\mu\tau}$. Therefore, for each $i$,
\[
\Phi_{\tau,\mu}(R_i(W_\lambda))\le(1-\kappa\lambda)\Phi_{\tau,\mu}(R_i(W))-\lambda\left(1-\frac{\lambda}{2-\kappa}\right)\frac{\kappa\mu}{2}\|R_i(W)\|^2.
\]
Averaging over $i=1,\ldots,N$ ensures
\[
h_{\rm KD}(W_\lambda)\le(1-\kappa\lambda)h_{\rm KD}(W)-\lambda\left(1-\frac{\lambda}{2-\kappa}\right)\frac{\kappa\mu}{2N}\sum_{i=1}^N\|R_i(W)\|^2.
\]
Since $R_i(W)=(W-W^\star)z_{s,i}$, it can be concluded that
\[
\sum_{i=1}^N\|R_i(W)\|^2=\|(W-W^\star)Z_s\|_F^2.
\]
In light of the fact that $Z_s$ has full row rank, we get
\[
\|(W-W^\star)Z_s\|_F^2\ge \sigma_{\min}(Z_s)^2\|W-W^\star\|_F^2,
\]
which consequently implies
\[
h_{\rm KD}(W_\lambda)\le(1-\kappa\lambda)h_{\rm KD}(W)-\lambda\left(1-\frac{\lambda}{2-\kappa}\right)\frac{\kappa}{2}\left(\frac{\mu}{N}\sigma_{\min}(Z_s)^2
\right)\|W-W^\star\|_F^2.
\]
Since $h_{\rm KD}(W^\star)=0$, this is precisely $\left(\kappa,\frac{\mu}{N}\sigma_{\min}(Z_s)^2\right)$-strong quasar-convexity with respect to $W^\star$.
\end{proof}

\begin{remark}[\textbf{Robust feature-alignment distillation is not star-convex}]
\label{rem:kd-nonstar}
Proposition~\ref{prop:stabilized-capped-loss} shows that the residual loss
$\Phi_{\tau,\mu}$ is not star-convex with respect to the origin. This property
does not automatically transfer to the averaged composed objective
$h_{\rm KD}$, because different residual terms may lie in different regions of the capped loss and the one-dimensional violation can be averaged out.

A simple sufficient condition is the existence of a direction
$\Delta W\in\mathbb R^{d_t\times d_s}$ such that
\begin{equation}\label{eq:kd-nonstar-condition}
\sum_{i=1}^N\left[\Phi_{\tau,\mu}\!\left(\frac12\Delta W z_{s,i}\right)-\frac12\Phi_{\tau,\mu}(\Delta W z_{s,i})\right]>0.
\end{equation}
Under this condition, $h_{\rm KD}$ is not star-convex with respect to
$W^\star$. Indeed, assume that there exists $\Delta W\in\mathbb R^{d_t\times d_s}$ satisfying \eqref{eq:kd-nonstar-condition}. Set
\[
W:=W^\star+\Delta W.
\]
Since $W^\star Z_s=Z_t$, we have
\[
Wz_{s,i}-z_{t,i} = \Delta W z_{s,i}, \qquad i=1,\ldots,N.
\]
Moreover,
\[
 \left(\frac12 W^\star+\frac12 W\right)z_{s,i}-z_{t,i}=\frac12\Delta W z_{s,i}.
\]
Therefore, from \eqref{eq:kd-nonstar-condition}
\[
h_{\rm KD}\left(\frac12 W^\star+\frac12 W\right)=\frac1N\sum_{i=1}^N\Phi_{\tau,\mu}\left(\frac12\Delta W z_{s,i}\right)>\frac1{2N}\sum_{i=1}^N\Phi_{\tau,\mu}(\Delta W z_{s,i})=\frac12 h_{\rm KD}(W).
\]
Since $h_{\rm KD}(W^\star)=0$, the star-convexity inequality with center
$W^\star$ would require
\[
h_{\rm KD}\!\left(\frac12 W^\star+\frac12 W\right)\le\frac12 h_{\rm KD}(W^\star)+\frac12 h_{\rm KD}(W)=\frac12 h_{\rm KD}(W),
\]
which contradicts the strict inequality above. Hence, $h_{\rm KD}$ is not star-convex with respect to $W^\star$.

\end{remark}

\subsection{Anisotropic robust residuals}
The capped loss \eqref{eq:capped_residual} is isotropic: it depends only on the residual norm and therefore treats all residual directions equally. However, in neural representation learning, residual directions may have different semantic roles, reliability levels, or alignment importance. This is especially natural in modern adaptation pipelines, where one often keeps large backbones or embedding models fixed and trains only lightweight task-specific components, such as low-rank adaptation matrices for frozen Transformer weights \cite{Hu2021LoRA}, feature-alignment maps between teacher and student representations \cite{Romero2015FitNets}, retrieval adapters that align query and document embedding spaces \cite{Maekawa2026Align}, or affine connectors used for model stitching across neural representations
\cite{Csiszarik2021RepresentationMatching}. Motivated by these settings, we next introduce an angularly weighted robust radial loss. Its radial profile controls nonsmoothness and nonconvex robustness, while the angular factor
$q(z/\|z\|)$ allows direction-dependent penalties in the representation residual.

\begin{proposition}[\textbf{A locally Lipschitz nonsmooth master family}]\label{prop:master-family}
Fix parameters $\tau,\mu>0$ such that $\mu\tau<\frac{1}{8}$ and define
\begin{equation}\label{eq:app-psi-tau}
 \psi_{\tau}(r):= \begin{cases}
 r, & 0\le r\le \tau,\\[0.5ex]
 \sqrt{\tau r}, & r>\tau.
 \end{cases}
\end{equation}
Let $f_{\tau,\mu}(r):=\psi_{\tau}(r)+\mu r^2$ for $r\ge 0$.
Let $d\ge 2$ and let $q:\mathbb S^{d-1}\to[1,\infty)$ be a nonconstant $C^1$ function, where 
$\mathbb S^{d-1}:=\{u\in\mathbb R^d:\|u\|=1\}$. 
Define
\[
H_{\tau,\mu,q}(z):=\begin{cases}
f_{\tau,\mu}(\norm z)\,q\left(\dfrac{z}{\norm z}\right), & z\neq 0,\\[1mm]
0,& z=0,
\end{cases}
\]
Then, $H_{\tau,\mu,q}$ is locally Lipschitz,  nonsmooth, and non-star-convex with respect to the origin. Moreover, it is $(\frac{1}{2},6\mu)$-strongly quasar-convex with respect to $\ov z=0$.
\end{proposition}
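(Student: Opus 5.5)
The plan is to follow the template of Proposition~\ref{prop:stabilized-capped-loss}: first dispose of the qualitative properties, then verify the Clarke first-order characterization of Fact~\ref{fact:char:clarke} at $\ov z=0$ with $\kappa=\tfrac12$ and $\gamma=6\mu$.

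\textbf{Regularity and non-star-convexity.} On $z\neq0$, the maps $z\mapsto\norm z$ and $z\mapsto z/\norm z$ are smooth, $q$ is $C^1$ on the compact sphere, and $f_{\tau,\mu}$ is locally Lipschitz on $(0,\infty)$ (its only kink is at $r=\tau$). Hence $H:=H_{\tau,\mu,q}$ is locally Lipschitz on $z\neq0$. Near the origin we use $|f_{\tau,\mu}(r)|\le 2r$ for small $r$, together with the bound
\[
|H(z)-H(z')|\le |f(\norm z)-f(\norm{z'})|\,\|q\|_\infty + f(\norm{z'})\,L_q\,\Bigl\|\tfrac{z}{\norm z}-\tfrac{z'}{\norm{z'}}\Bigr\|,
\]
where $\norm{z'}\bigl\|z/\norm z-z'/\norm{z'}\bigr\|\le 2\norm{z-z'}$. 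This shows Lipschitz continuity near $0$.

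Nonsmoothness can be read off the radial kink at $\norm z=\tau$: there $\psi_\tau'$ jumps from $1$ to $\tfrac12$. One could also use the cone-type behaviour at $0$.

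For non-star-convexity, fix $u$ with $q(u)=q_u$ and compare $H(\tau u)=(\tau+\mu\tau^2)q_u$ with $\tfrac12 H(4\tau u)=\tfrac12(2\tau+16\mu\tau^2)q_u$. We have $\tau+\mu\tau^2>\tau+8\mu\tau^2$ fails, so this choice does not work directly. Instead take a large $R$ and compare $H(Ru/2)$ with $\tfrac12 H(Ru)$ in the range where the $\sqrt{\tau r}$ term dominates, i.e.\ $\sqrt{\tau R/2}>\tfrac12\sqrt{\tau R}$. The inequality needed is
\[
\sqrt{\tau R/2}+\mu R^2/4>\tfrac12\sqrt{\tau R}+\tfrac12\mu R^2,
\]
which amounts to $(\tfrac1{\sqrt2}-\tfrac12)\sqrt{\tau R}>\tfrac14\mu R^2$. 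With $R=2\tau$ the left side is about $0.29\tau$ and the right side is $\mu\tau^2$. This holds because $\mu\tau<1/8$. The factor $q_u$ cancels, so the violation is uniform over directions.

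\textbf{Strong quasar-convexity.} We must show
\[
\ip{v}{z}\ge \kappa\Bigl(H(z)+\tfrac{\gamma}{2}\norm z^2\Bigr)=\tfrac12 H(z)+\tfrac32\mu\norm z^2
\]
for all $z\neq0$ and all $v\in\partial^C H(z)$. The key observation is Euler-type: for $z\neq0$ off the sphere $\norm z=\tau$, $H$ is differentiable. Moreover, the angular factor is $0$-homogeneous, so its gradient is orthogonal to $z$. Therefore
\[
\ip{\nabla H(z)}{z}=r f'(r)\,q(u),\qquad r=\norm z,\ u=z/r.
\]
On the kink sphere, $\partial^C H(z)$ is the convex hull of the two one-sided limits (the gradient formula above is continuous on each side). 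So $\ip{v}{z}$ lies between the two radial values, and it suffices to check the smaller one, $r\,\min\{f'_-(r),f'_+(r)\}\,q(u)$. This reduces everything to a one-dimensional inequality, using $q\ge1$:
\[
r f'(r)\,q\ge \tfrac12 f(r)\,q+\tfrac32\mu r^2.
\]
Since $q\ge1$, it is enough to prove $r f'(r)-\tfrac12 f(r)\ge \tfrac32\mu r^2$.

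For $r\le\tau$ (right derivative at the kink), the left side equals $r+2\mu r^2-\tfrac12 r-\tfrac12\mu r^2=\tfrac12 r+\tfrac32\mu r^2$, which is enough. For $r\ge\tau$ (lower slope $\tfrac12\sqrt{\tau/r}$), it equals $\tfrac12\sqrt{\tau r}+2\mu r^2-\tfrac12\sqrt{\tau r}-\tfrac12\mu r^2=\tfrac32\mu r^2$. This holds with equality, which explains the constant $\gamma=6\mu$. Fact~\ref{fact:char:clarke} then concludes the argument.

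\textbf{Main obstacle.} The delicate step is the rigorous description of $\partial^C H$ on the kink sphere and the claim that $\ip{v}{z}$ is controlled by the radial one-sided slopes. The angular gradient is continuous across the sphere and stays orthogonal to $z$, so it contributes nothing to $\ip{v}{z}$. I would prove this by using the gradient-limit representation of $\partial^C$ and passing to limits from each side.
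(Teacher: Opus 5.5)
Your proposal is correct. The local Lipschitz and nonsmoothness parts match the paper's: the same splitting with $\|z'\|\,\|u-u'\|\le 2\|z-z'\|$, and the same radial kink at $\|z\|=\tau$. The strong quasar-convexity, however, is proved by a genuinely different route.

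\textbf{Strong quasar-convexity: the two routes.} The paper never uses $\partial^C H$ here. It verifies Definition~\ref{def:qcx} directly:
\begin{itemize}
\item it proves the scalar shrink inequality $\psi_\tau((1-\lambda)r)\le(1-\tfrac{\lambda}{2})\psi_\tau(r)$, treating the mixed case $(1-\lambda)r\le\tau<r$ separately;
\item it uses the identity $\mu(1-\lambda)^2=\mu(1-\tfrac{\lambda}{2})-\lambda(1-\tfrac{2\lambda}{3})\tfrac{3\mu}{2}$ to get $(\tfrac12,6\mu)$-strong quasar-convexity of $f_{\tau,\mu}$ along each ray;
\item it multiplies by $q(u)\ge 1$.
\end{itemize}
You instead go through Fact~\ref{fact:char:clarke}. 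You use the Euler identity $\langle\nabla H(z),z\rangle=rf'(r)q(u)$ off the sphere, and the two-gradient convex hull on $\|z\|=\tau$. This reduces everything to $rf'(r)-\tfrac12 f(r)\ge\tfrac32\mu r^2$ for both one-sided slopes. Your computations are right, and the equality for $r>\tau$ makes transparent where $\gamma=6\mu$ comes from.

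\textbf{Trade-off.} Your route is shorter once $\partial^C H$ on the kink sphere is described. But it depends on local Lipschitz continuity, which is a hypothesis of Fact~\ref{fact:char:clarke}, and on $q\in C^1$ for the differentiability and gradient-limit description. The paper's zeroth-order argument uses only $q\ge 1$, so it works for any angular weight bounded below by $1$.

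\textbf{Minor points.}
\begin{itemize}
\item You should note explicitly that the case $z=0$ is trivial.
\item Your label ``right derivative'' for the case $r\le\tau$ should read ``left''. Since you check both slopes, this changes nothing.
\item Your aside that the witness $4\tau u$ fails is mistaken. Since $\tau u=\tfrac34\cdot 0+\tfrac14(4\tau u)$, the correct comparison is with $\tfrac14 H(4\tau u)$, not $\tfrac12 H(4\tau u)$. Then $H(\tau u)-\tfrac14 H(4\tau u)=q(u)\tau(\tfrac12-3\mu\tau)>0$, which is exactly the paper's witness.
\item Your replacement witness $2\tau u$ with weight $\tfrac12$ is still valid; it needs only $\mu\tau<1-1/\sqrt{2}$. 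So nothing in your argument breaks.
\end{itemize}
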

\begin{proof}
We first analyze the scalar radial core $f_{\tau,\mu}$. Since $\psi_{\tau}$ is continuous and piecewise $C^1$ with slope $1$ on $[0,\tau)$ and slope $\frac12\sqrt{\tau/r}\le \frac12$ on $(\tau,+\infty)$, it is globally Lipschitz on $[0,+\infty)$. Hence, $f_{\tau,\mu}$ is locally Lipschitz and nonsmooth at $r=\tau$. We claim that
\begin{equation}\label{eq:psi-shrink}
 \psi_{\tau}((1-\lambda)r)\le \left(1-\frac{\lambda}{2}\right)\psi_{\tau}(r),
 \qquad \forall r\ge 0,\ \forall \lambda\in[0,1].
\end{equation}
If $r\le \tau$, then $(1-\lambda)r\le \tau$ and \eqref{eq:psi-shrink} become $(1-\lambda)r\le (1-\lambda/2)r$, which is obvious. If $r>\tau$ and $(1-\lambda)r>\tau$, then
\[
 \psi_{\tau}((1-\lambda)r)=\sqrt{1-\lambda}\,\psi_{\tau}(r) \le \left(1-\frac{\lambda}{2}\right)\psi_{\tau}(r).
\]
It remains to consider the mixed case $r>\tau$ and $(1-\lambda)r\le \tau$.
Set $s:=1-\lambda\in[0,1]$ and $a:=\sqrt{r/\tau}>1$. The condition
$s r\le \tau$ is equivalent to $s\le a^{-2}$. We need to prove
\[
 sr\le \frac{1+s}{2}\sqrt{\tau r},
\]
or, equivalently,
\[
 sa\le \frac{1+s}{2}.
\]
The function $\varphi(s):=(1+s)/2-sa$ decreases on $[0,a^{-2}]$, because
$a>1$. Hence, its minimum on this interval is attained at $s=a^{-2}$ and for each $s\in [0, a^{-2}]$,
\[
\frac{1+s}{2}-sa=\varphi(s)\ge \varphi(a^{-2})=\frac{1+a^{-2}}{2}-\frac1a =\frac{(a-1)^2}{2a^2}\ge 0.
\]
Thus, we have $sa\le (1+s)/2$, which proves \eqref{eq:psi-shrink} in the mixed case and
therefore completes the proof of \eqref{eq:psi-shrink}.
Using \eqref{eq:psi-shrink}, for all $r\ge 0$ and $\lambda\in[0,1]$ we get
\[
 f_{\tau,\mu}((1-\lambda)r) \le \left(1-\frac{\lambda}{2}\right)\psi_{\tau}(r)+\mu(1-\lambda)^2r^2.
\]
Since
\[
 \mu(1-\lambda)^2 = \mu\left(1-\frac{\lambda}{2}\right)-\lambda\left(1-\frac{\lambda}{2-\kappa}\right)\frac{\kappa\gamma}{2},
\]
with $\kappa=1/2$ and $\gamma=6\mu$, it follows that
\[
 f_{\tau,\mu}((1-\lambda)r) \le \left(1-\frac{\lambda}{2}\right)f_{\tau,\mu}(r)
 -\lambda\left(1-\frac{\lambda}{2-\kappa}\right)\frac{\kappa\gamma}{2}\,r^2.
\]
Due to $f_{\tau,\mu}(0)=0$, this is exactly the $(\kappa,\gamma)$-strong quasar-convexity of $f_{\tau,\mu}$ with respect to $\ov{z}=0$.

This scalar inequality directly produces the strong quasar-convexity of $H_{\tau,\mu,q}$. Indeed, fix $z\neq0$ and write $z=ru$ with $r=\norm z$ and $u\in\mathbb S^{d-1}$. Since the segment from $0$ to $z$ maintains the same direction $u$, multiplying the scalar inequality by $q(u)$ gives
\[
 H_{\tau,\mu,q}((1-\lambda)z) \le \left(1-\frac{\lambda}{2}\right)H_{\tau,\mu,q}(z)
 -\lambda\left(1-\frac{\lambda}{2-\kappa}\right)\frac{\kappa\gamma}{2}\,q(u)\norm z^2.
\]
Together with $q(u)\ge1$ and $H_{\tau,\mu,q}(0)=0$, this implies
\[
 H_{\tau,\mu,q}((1-\lambda)z) \le \kappa\lambda H_{\tau,\mu,q}(0)+(1-\kappa\lambda)H_{\tau,\mu,q}(z)
 -\lambda\left(1-\frac{\lambda}{2-\kappa}\right)\frac{\kappa\gamma}{2}\norm z^2,
\]
which is exactly $(\kappa,\gamma)$-strong quasar-convexity with respect to $\ov z=0$.

Next, we prove the local Lipschitz continuity of $H_{\tau,\mu,q}$. Since $q$ is $C^1$ on the compact sphere, it is Lipschitz and bounded there. Hence, there exist constants $L_q, M_q>0$ such that
\[
 |q(u_1)-q(u_2)|\le L_q\norm{u_1-u_2}, \qquad
 1\le q(u)\le M_q, \qquad \forall u,u_1,u_2\in\mathbb S^{d-1}.
\]
Let us write
\[
 H_{\tau,\mu,q}(z)=F_1(z)+F_2(z),
\]
where
\[
 F_1(z):= \begin{cases}
  \psi_\tau(\norm z)\,q\!\left(\dfrac{z}{\norm z}\right), & z\neq 0,\\[1mm]
  0,& z=0,
 \end{cases}
 \qquad
 F_2(z):= \begin{cases}
  \mu\norm z^2\,q\!\left(\dfrac{z}{\norm z}\right), & z\neq 0,\\[1mm]
  0,& z=0.
 \end{cases}
\]
Let $z_1,z_2\neq 0$, set $r_i:=\norm{z_i}$ and $u_i:=z_i/r_i$, and assume without loss of generality that $r_1\ge r_2$. Since $\psi_\tau$ is $1$-Lipschitz and $\psi_\tau(r_2)\le r_2$, we have
\begin{align*}
    |F_1(z_1)-F_1(z_2)|&\le |\psi_\tau(r_1)-\psi_\tau(r_2)|\,|q(u_1)|+\psi_\tau(r_2)|q(u_1)-q(u_2)|\\
 &\le M_q|r_1-r_2|+r_2L_q\norm{u_1-u_2}. 
\end{align*}
In addition, it holds that
\begin{align*}
 r_2\norm{u_1-u_2} =\norm{r_2u_1-r_2u_2} &\le \norm{r_2u_1-r_1u_1}+\norm{r_1u_1-r_2u_2}
 \\&= |r_1-r_2|+\norm{z_1-z_2} \le 2\norm{z_1-z_2},
\end{align*}
i.e.,
\[
 |F_1(z_1)-F_1(z_2)|\le (M_q+2L_q)\norm{z_1-z_2}.
\]
If one of the points is $0$, the same bound follows from $\psi_\tau(r)\le r$ and $q\le M_q$, i.e., $F_1$ is globally Lipschitz.

Now, let us fix $R>0$ and let $z_1,z_2\in\mb(0,R)\setminus\{0\}$. Together with $r_1+r_2\le 2R$, the estimate above for $r_2\norm{u_1-u_2}$ leads to
\[
 \begin{aligned}
 |F_2(z_1)-F_2(z_2)| &\le \mu M_q|r_1^2-r_2^2|+\mu r_2^2L_q\norm{u_1-u_2}\\
 &\le 2\mu M_qR|r_1-r_2|+\mu r_2L_q\left(r_2\norm{u_1-u_2}\right)\\
 &\le 2\mu(M_q+L_q)R\norm{z_1-z_2}.
 \end{aligned}
\]
If one point is zero, say $z_2=0$, then for $z_1\in \mb(0,R)$,
\[
 |F_2(z_1)-F_2(0)| =\mu\norm{z_1}^2 q(z_1/\norm{z_1}) \le \mu M_q R\norm{z_1} = \mu M_q R\norm{z_1-z_2}.
\]
As a consequence, $F_2$ is Lipschitz on all bounded balls, i.e., locally Lipschitz on $\R^d$. Since $H_{\tau,\mu,q}=F_1+F_2$, the map $H_{\tau,\mu,q}$ is locally Lipschitz on $\R^d$. Moreover, the function is nonsmooth because along any ray $z=ru$ the derivative jumps at $r=\tau$.

Finally, we show that $H_{\tau,\mu,q}$ is not star-convex with respect to $\ov z =0$. Choose any unit vector $u$ and set $z:=4\tau u$. Then $\norm z>\tau$ and $\norm{z/4}=\tau$, so
\[
 H_{\tau,\mu,q}(z/4)-\frac14 H_{\tau,\mu,q}(z)
 = q(u)\left(\tau+\mu\tau^2-\frac14\left(2\tau+16\mu\tau^2\right)\right)
 = q(u)\tau\left(\frac12-3\mu\tau\right)>0,
\]
thanks to $\mu\tau<1/8<1/6$. Hence, $H_{\tau,\mu,q}$ is not star-convex with the center $\ov z =0$.
\end{proof}

\begin{example}[\textbf{Anisotropic robust model stitching}]
\label{ex:anisotropic-model-stitching}
Model stitching studies whether internal representations of two frozen networks can be connected by a low-capacity map, often a linear or affine
transformation~\cite{Csiszarik2021RepresentationMatching}. Let $X=[x_1,\ldots,x_N]\in\mathbb R^{d_x\times N}$ and
$Y=[y_1,\ldots,y_N]\in\mathbb R^{d_y\times N}$ denote paired hidden representations from two frozen models. Consider the affine connector
$x\mapsto Ax+b$. With 
$\widetilde X :=\begin{bmatrix}X\\\mathbf 1^\top\end{bmatrix}$ and $\widetilde A:=[A\ b]$, define the anisotropic robust stitching loss
\[
h_{\rm stitch}(\widetilde A):=H_{\tau,\mu,q}\left(\operatorname{vec}(\widetilde A\widetilde X-Y)\right),
\]
where now $q:\mathbb S^{d_yN-1}\to[1,\infty)$ is a nonconstant $C^1$ angular weight.
The angular factor $q$ allows direction-dependent penalties in the hidden representation mismatch, while the kinked radial profile gives a nonsmooth
nonconvex discrepancy.

Assume that $\widetilde X$ has full row rank and that there exists $\widetilde A^\star$ such that $\widetilde A^\star \widetilde X=Y$.
Let $\sigma_{\min}(\widetilde X)$ denote the smallest singular value of $\widetilde X$. 
Then $h_{\rm stitch}$ is locally Lipschitz, non-star-convex, and
$\left(\frac12,6\mu\,\sigma_{\min}(\widetilde X)^2\right)$-strongly quasar-convex with respect to $\widetilde A^\star$.
\end{example}
\begin{proof}
Let $\mathcal L:\mathbb R^{d_y\times (d_x+1)}\to \mathbb R^{d_yN}$ be defined as $\mathcal L(U):=\operatorname{vec}(U\widetilde X)$.
It follows from $\widetilde A^\star\widetilde X=Y$ that,
for every $\widetilde A\in\mathbb R^{d_y\times(d_x+1)}$,  
\[
\operatorname{vec}(\widetilde A\widetilde X-Y)=\operatorname{vec}\bigl((\widetilde A-\widetilde A^\star)\widetilde X\bigr)
=\mathcal L(\widetilde A-\widetilde A^\star),
\]
i.e.,
\[
h_{\rm stitch}(\widetilde A)=H_{\tau,\mu,q}\bigl(\mathcal L(\widetilde A-\widetilde A^\star)\bigr).
\]
We first record the rank property of $\mathcal L$. Under the standard
vectorization identity,
\[
\mathcal L(U)=(\widetilde X^\top\otimes I_{d_y})\operatorname{vec}(U).
\]
Since $\widetilde X$ has the full row rank, the matrix
$\widetilde X^\top$ has the full column rank, i.e., $\widetilde X^\top\otimes I_{d_y}$ has the full column rank, and
\[
\sigma_{\min}(\mathcal L)=\sigma_{\min}(\widetilde X^\top\otimes I_{d_y})=\sigma_{\min}(\widetilde X).
\]
In particular, $\mathcal L$ is injective. 

The local Lipschitz continuity of $h_{\rm stitch}$ follows from Proposition~\ref{prop:master-family} because $H_{\tau,\mu,q}$ is locally Lipschitz and $h_{\rm stitch}$ is its composition with an affine map.

Next, we show that $h_{\rm stitch}$ is nonsmooth. Since $\mathcal L$ is injective, for every nonzero $U\in\mathbb R^{d_y\times(d_x+1)}$, we have
$\mathcal L U\neq0$. Fix such a $U$, and set $v:=\mathcal{L} U$ and $u:=\frac{v}{\|v\|}$.
Continuing along the lines $\widetilde A^\star+tU$, we have
\[
h_{\rm stitch}(\widetilde A^\star+tU) = H_{\tau,\mu,q}(tv).
\]
For sufficiently small $t>0$,
\[
H_{\tau,\mu,q}(tv)=\bigl(t\|v\|+\mu t^2\|v\|^2\bigr)q(u),
\]
whereas for sufficiently small $t<0$,
\[
H_{\tau,\mu,q}(tv) =\bigl(|t|\|v\|+\mu t^2\|v\|^2\bigr)q(-u).
\]
Consequently, the one-dimensional restriction $t\mapsto h_{\rm stitch}(\widetilde A^\star+tU)$ has a kink at $t=0$. Hence $h_{\rm stitch}$ is nonsmooth at
$\widetilde A^\star$.

We now prove that $h_{\rm stitch}$ is not star-convex with respect to $\widetilde A^\star$. Since $\mathcal L$ is injective, choose any
$U\neq0$, and set $u:=\frac{\mathcal L U}{\|\mathcal L U\|}$ and $t:=\frac{4\tau}{\|\mathcal L U\|}$.
Then $t\mathcal L U=4\tau u$. By Proposition~\ref{prop:master-family}, the function $H_{\tau,\mu,q}$ is not star-convex with respect to $0$. More explicitly, using the computation in the proof of Proposition~\ref{prop:master-family},
\[
H_{\tau,\mu,q}(\tau u)> \frac14 H_{\tau,\mu,q}(4\tau u),
\]
i.e.,
\[
 H_{\tau,\mu,q}\left(\frac{t}{4}\mathcal L U\right)>\frac14 H_{\tau,\mu,q}(t\mathcal L U).
\]
Using the identity $h_{\rm stitch}(\widetilde A^\star+sU)=H_{\tau,\mu,q}(s\mathcal L U)$, we come to
\[
 h_{\rm stitch}\left(\widetilde A^\star+\frac{t}{4}U\right)>\frac14 h_{\rm stitch}(\widetilde A^\star+tU).
\]
In addition, it can be deduced that
\[
\widetilde A^\star+\frac{t}{4}U =\frac34\widetilde A^\star+\frac14(\widetilde A^\star+tU).
\]
Since $h_{\rm stitch}(\widetilde A^\star)=0$, the star-convexity with respect to $\widetilde A^\star$ implies
\[
 h_{\rm stitch}\left(\frac34\widetilde A^\star+\frac14(\widetilde A^\star+tU)\right)\le\frac34 h_{\rm stitch}(\widetilde A^\star)+
\frac14 h_{\rm stitch}(\widetilde A^\star+tU)=\frac14 h_{\rm stitch}(\widetilde A^\star+tU),
\]
which contradicts the strict inequality above. Therefore, $h_{\rm stitch}$ is not star-convex with respect to $\widetilde A^\star$.

It remains to prove strong quasar-convexity.
Since $H_{\tau,\mu,q}$ is $\left(\frac12,6\mu\right)$-strongly quasar-convex with respect to $0\in\argmin{} H_{\tau,\mu,q}$ and
since $\mathcal L$ has full column rank and $\mathcal L(0)=0\in \argmin{} H_{\tau,\mu,q}$,
the linear-composition rule for strongly quasar-convex functions (see \cite[Proposition~21]{Ahookhosh2026Quasar}) gives that the function $U\mapsto H_{\tau,\mu,q}(\mathcal L U)$ is $\left(\frac12,6\mu\,\sigma_{\min}(\mathcal L)^2\right)=\left(\frac12,6\mu\,\sigma_{\min}(\widetilde X)^2\right)$-strongly quasar-convex with respect to $U=0$. Finally, the translation $U=\widetilde A-\widetilde A^\star$ preserves strong quasar-convexity, and
therefore $h_{\rm stitch}$ is $\left(\frac12,6\mu\,\sigma_{\min}(\widetilde X)^2\right)$-strongly quasar-convex with respect to $\widetilde A^\star$.
\end{proof}

\section{Inexact high-order proximal-point algorithm} \label{sec:inexatHiPPA}
We now turn to the algorithmic framework. The goal is to solve
\begin{equation}\label{eq:mainproblem}
    \mathop{\bs\min}\limits_{x\in \R^n}\ h(x),
\end{equation}
under the following standing assumptions:
\begin{assumption}[\textbf{Basic assumptions}]\label{ass:basic} 
The function $h:\R^n\to\Rinf$ is proper and lower semicontinuous, $\dom h$ is convex, and $X^\star\ne\emptyset$.  Unless otherwise stated, $h$ is $\kappa$-quasar-convex or $(\kappa,\gamma)$-strongly quasar-convex with respect to a fixed minimizer $\xb\in X^\star$.  The proximal parameters satisfy
\begin{equation}\label{eq:beta-bounds}
        0<\beta_{\min}\le \beta_k\le \beta_{\max}<\infty.
\end{equation}
\end{assumption}

For $x\in\R^n$, $\beta>0$, and $p>1$, let us define
\begin{equation}\label{eq:Qxbeta}
Q_{x,\beta}(y):=h(y)+\frac{1}{p\beta}\norm{y-x}^{p},
\qquad T_{\beta,p}(x):=\argmint{y\in\R^n}Q_{x,\beta}(y).
\end{equation}
At the iteration $k$, we write $Q_k:=Q_{x^k,\beta_k}$.
Since $h$ is bounded below by $h^\star$, the function
$Q_{x,\beta}$  is proper, lower semicontinuous, bounded below, and coercive. Hence $\inf_{\R^n} Q_{x,\beta}$ is attained, and $T_{\beta,p}(x)\neq\emptyset$. 
In the current solution $x^k$, for a given $\beta_k>0$ and for $p>1$, the exact proximal oracle in the high-order proximal-point method (HiPPA) computes a minimizer of $Q_k$ and set $x^{k+1}\in T_{\beta_k,p}(x^k)$. 
Since this subproblem is generally difficult to solve exactly for nonsmooth and nonconvex objectives, we next introduce inexact proximal steps.

\subsection{Model-value gap, residual, and metric inexactness}

The exact high-order proximal-point method computes a minimizer of the regularized model $Q_k$ at each iteration.  For robust nonsmooth objectives, this subproblem is rarely solved exactly. 
We therefore introduce three inexactness criteria for approximate proximal steps.
The first measures suboptimality in the proximal model value, the second measures approximate stationarity of the proximal model, and the third measures distance to the exact proximal solution set.

\begin{definition}[\textbf{Inexact proximal step}]\label{def:inexact_defs}
Given $x^k\in\dom h$, $\beta_k>0$, and $p>1$, a point $x^{k+1}\in\dom h$ is called
\begin{enumerate}[label=(\textbf{\alph*}), font=\normalfont\bfseries, leftmargin=0.7cm]
\item \label{def:value-gap} a \textit{Model-value gap inexact proximal step} with accuracy $\eps_k\ge0$ if
\begin{equation}\label{eq:value-gap}
        Q_k(x^{k+1})\le \inft{y\in\R^n}Q_k(y)+\eps_k.
\end{equation}
\item \label{def:residual} a \textit{residual inexact proximal step} with tolerance $\eta_k\ge0$ if 
if $h$ is locally Lipschitz around $x^{k+1}$ and there exists $r^{k+1}\in\R^n$ such that
\begin{equation}\label{eq:residual-inclusion}
r^{k+1}\in \partial^C h(x^{k+1})+\frac1{\beta_k}\Jp(x^{k+1}-x^k),\qquad \text{with}\qquad \norm{r^{k+1}}\le \eta_k.
\end{equation}
\item \label{def:metric} a \textit{metric inexact proximal step} with accuracy $\delta_k\ge0$ if
\begin{equation}\label{eq:metric-inexact}
\dist\big(x^{k+1},T_{\beta_k,p}(x^k)\big)\le \delta_k .
\end{equation}
\end{enumerate}
\end{definition}

These notions serve different purposes.
The model-value gap condition is well-suited for descent estimates because it compares the computed point
directly with the minimum value of the proximal model. However, certifying this condition requires either the exact value $\inf Q_k$ or a valid lower bound on it.
Many first-order and quasi-Newton solvers instead provide residual or progress-based stopping rules.
Residual inexactness is the natural condition for stationarity, since it approximates the optimality condition of the proximal model. 
Metric inexactness is useful for transferring local rates of the exact proximal map to inexact iterations. 
These notions are not equivalent, in general. In particular, for nonconvex proximal subproblems, a small model-value gap does not automatically imply a small residual unless an additional error-bound or growth condition is available.

The following remark clarifies the well-definedness of Definition~\ref{def:inexact_defs}~\ref{def:value-gap}.
\begin{remark}
Since $Q_k$ is proper, lower semicontinuous, bounded below, and coercive, its infimum is attained. 
Therefore, for every $\eps_k\ge0$, the model-value gap condition
\[
Q_k(x^{k+1})\le \inf_{y\in\R^n}Q_k(y)+\eps_k
\]
is well defined. In particular, when $\eps_k=0$, this condition reduces to
the exact high-order proximal step $x^{k+1}\in T_{\beta_k,p}(x^k)$.
Thus, the model-value gap oracle is a direct relaxation of the exact proximal oracle used in HiPPA \cite{Ahookhosh2026Quasar}.
\end{remark}

By Definition~\ref{def:inexact_defs}~\ref{def:value-gap}, for every $z\in\R^n$, it can be concluded that
\begin{equation}\label{eq:upper-model}
 Q_k(x^{k+1})\le Q_k(z)+\varepsilon_k.
\end{equation}
In particular, choosing $z=x^k$ yields the descent estimate
\begin{equation}\label{eq:descent-gap}
 h(x^{k+1})+\frac{1}{p\beta_k}\norm{x^{k+1}-x^k}^p\le h(x^k)+\varepsilon_k.
\end{equation}
Setting $z=\xb$ leads to
\begin{equation}\label{eq:compare-opt}
 h(x^{k+1})-h^\star\le \frac{1}{p\beta_k}\left(\norm{x^k-\xb}^p-\norm{x^{k+1}-x^k}^p\right)+\varepsilon_k.
\end{equation}
\begin{remark}[\textbf{A model-value gap does not automatically imply a residual bound}]\label{rem:value-gap-residual-warning}
For nonsmooth proximal models, a small model-value gap does not generally imply a small Clarke residual.  For example, for $Q(y)=|y|$, the value-based gap $Q(y)-\inf Q$ tends to zero as $y\to0$, but $\dist(0,\partial^C Q(y))=1$ for $y\ne0$. Thus, converting a model-value gap estimate into a residual estimate requires an
additional error-bound or growth condition for the proximal model.
\end{remark}

The model-value gap condition gives descent estimates, but it does not by itself imply approximate stationarity for nonsmooth proximal subproblems. We therefore
use the following local error-bound assumption to convert model suboptimality into a residual bound.
\begin{assumption}[\textbf{Local residual error bound for the proximal model}]\label{ass:residual-eb}
Let $B\subset\R^n$ be compact and let $I := [\beta_{\min},\beta_{\max}]$.  Assume that $h$ is finite and locally Lipschitz on an open neighborhood of $B$. Suppose that there exists $L_{B,I}>0$ such that, for all $x,y\in B$ and all $\beta\in I$,
\begin{equation}\label{eq:residual-eb}
\dist\left(0,\partial^C h(y)+\frac1\beta\Jp(y-x)\right)\le L_{B,I}\sqrt{Q_{x,\beta}(y)-\inft{y\in\R^n} Q_{x,\beta}(y)}.
\end{equation}
\end{assumption}

The next proposition shows that, under this local error bound, every model-value gap inexact step automatically satisfies a residual inexactness condition.
\begin{proposition}[\textbf{Model-value gap implies residual inexactness}]\label{prop:value-to-residual}
Suppose Assumption~\ref{ass:residual-eb} holds on $(B,I)$.  If $x^k,x^{k+1}\in B$, $\beta_k\in I$, and $x^{k+1}$ satisfies \eqref{eq:value-gap}, then $x^{k+1}$ satisfies the residual condition \eqref{eq:residual-inclusion} with
$\eta_k=L_{B,I}\sqrt{\eps_k}$.
\end{proposition}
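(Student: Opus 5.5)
The argument is a direct application of Assumption~\ref{ass:residual-eb}, followed by a compactness argument to produce an exact element of the set. We take $x=x^k$, $y=x^{k+1}$ and $\beta=\beta_k$. Since $x^k,x^{k+1}\in B$ and $\beta_k\in I$, the error bound \eqref{eq:residual-eb} applies. Combined with the model-value gap condition \eqref{eq:value-gap}, which reads $Q_k(x^{k+1})-\inf_{\R^n}Q_k\le\eps_k$, and the monotonicity of the square root, it gives
\[
\dist\left(0,\partial^C h(x^{k+1})+\frac{1}{\beta_k}\Jp(x^{k+1}-x^k)\right)\le L_{B,I}\sqrt{\eps_k}.
\]

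It remains to produce an actual vector $r^{k+1}$ in the set $S:=\partial^C h(x^{k+1})+\frac1{\beta_k}\Jp(x^{k+1}-x^k)$ with norm at most the distance. Since $h$ is locally Lipschitz on an open neighborhood of $B\ni x^{k+1}$, the Clarke subdifferential $\partial^C h(x^{k+1})$ is nonempty, convex and compact. Adding the single vector $\frac1{\beta_k}\Jp(x^{k+1}-x^k)$, with $\Jp(z)=\norm{z}^{p-2}z$ as in Proposition~\ref{prop:stability-approx-prox} and $\Jp(0)=0$, is a translation, so $S$ is also nonempty and compact. The distance from $0$ to $S$ is therefore attained at some $r^{k+1}\in S$, and $\norm{r^{k+1}}\le L_{B,I}\sqrt{\eps_k}=\eta_k$.

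This is exactly the inclusion \eqref{eq:residual-inclusion}. The remaining requirements of Definition~\ref{def:inexact_defs}\ref{def:residual} also hold: $h$ is locally Lipschitz around $x^{k+1}$ and $x^{k+1}\in\dom h$, since $h$ is finite near $B$.

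The only point needing care is this attainment step, i.e., that the infimum defining the distance is a minimum. It rests on the compactness of the Clarke subdifferential, which holds because $h$ is locally Lipschitz near $x^{k+1}$.
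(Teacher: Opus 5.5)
Your proposal is correct and follows essentially the same route as the paper: apply \eqref{eq:residual-eb} with $x=x^k$, $y=x^{k+1}$, $\beta=\beta_k$, bound the right-hand side by $L_{B,I}\sqrt{\eps_k}$ via \eqref{eq:value-gap}, and use compactness of the translated Clarke subdifferential to attain the distance at some $r^{k+1}$. The extra checks you make on the remaining requirements of Definition~\ref{def:inexact_defs}\ref{def:residual} (local Lipschitzness at $x^{k+1}$ and $x^{k+1}\in\dom h$) are harmless additions that the paper leaves implicit.
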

\begin{proof}
Since $x^k,x^{k+1}\in B$ and $\beta_k\in I$, applying \eqref{eq:residual-eb} to $x=x^k$, $\beta=\beta_k$, and $y=x^{k+1}$ gives
\[
 \dist\left(0,\partial^C h(x^{k+1})+\tfrac1{\beta_k}\Jp(x^{k+1}-x^k)\right) \le L_{B,I}\sqrt{Q_k(x^{k+1})-\inf_{z\in\R^n} Q_k(z)}.
\]
By the model-value gap condition \eqref{eq:value-gap}, the right-hand side is at most $L_{B,I}\sqrt{\eps_k}$.
Since $h$ is locally Lipschitz around $x^{k+1}$, the Clarke subdifferential $\partial^C h(x^{k+1})$ is nonempty and compact. Hence, the shifted set
$\partial^C h(x^{k+1}) +\frac1{\beta_k}\Jp(x^{k+1}-x^k)$ is also nonempty and compact, so the distance to this set is attained. Therefore,
there exists $ r^{k+1}\in \partial^C h(x^{k+1})+\frac1{\beta_k}\Jp(x^{k+1}-x^k)$
such that $\norm{r^{k+1}}\le L_{B,I}\sqrt{\eps_k}$. Thus, $x^{k+1}$ satisfies the residual inexactness condition
\eqref{eq:residual-inclusion} with $\eta_k=L_{B,I}\sqrt{\eps_k}$.
\end{proof}

Proposition~\ref{prop:value-to-residual} shows that a model-value gap estimate can be converted to residual inexactness under a local residual error bound. We now give an analogous bridge from model-value gap inexactness to metric inexactness. This will be useful later when transferring local rates of the exact
proximal map to inexact proximal iterations.

\begin{assumption}[\textbf{Local metric growth of the proximal model}]\label{ass:metric-growth}
There exist neighborhoods $U,V$ of $\xb$, constants $c_{\rm mg}>0$ and $s\ge1$, and an interval $I=[\beta_{\min},\beta_{\max}]$, such that, for all
$x\in U$, all $y\in V$, and all $\beta\in I$,
\begin{equation}\label{eq:metric-growth}
Q_{x,\beta}(y)-\inf_{z\in\R^n} Q_{x,\beta}(z)\ge c_{\rm mg}\,\dist\big(y,T_{\beta,p}(x)\big)^s.
\end{equation}
\end{assumption}

\begin{proposition}[\textbf{Model-value gap implies metric inexactness}]\label{prop:value-to-metric}
Suppose Assumption~\ref{ass:metric-growth} holds.  If $x^k\in U$, $x^{k+1}\in V$, $\beta_k\in I$, and $x^{k+1}$ satisfies \eqref{eq:value-gap}, then
\begin{equation}\label{eq:value-to-metric-bound}
        \dist\big(x^{k+1},T_{\beta_k,p}(x^k)\big)\le c_{\rm mg}^{-1/s}\eps_k^{1/s}.
\end{equation}
\end{proposition}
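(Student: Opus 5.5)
The argument is a direct chaining of Assumption~\ref{ass:metric-growth} with the model-value gap condition \eqref{eq:value-gap}, in the same spirit as the proof of Proposition~\ref{prop:value-to-residual}. No structural property of $h$ is used beyond what is packaged into the metric growth inequality. The key steps, in order, are as follows.

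First, since $x^k\in U$, $x^{k+1}\in V$ and $\beta_k\in I$, I will apply \eqref{eq:metric-growth} with $x=x^k$, $y=x^{k+1}$ and $\beta=\beta_k$. Recalling that $Q_k=Q_{x^k,\beta_k}$, this gives
\[
c_{\rm mg}\,\dist\big(x^{k+1},T_{\beta_k,p}(x^k)\big)^s\le Q_k(x^{k+1})-\inf_{z\in\R^n}Q_k(z).
\]

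Second, I will bound the right-hand side using the model-value gap condition \eqref{eq:value-gap}, which says it is at most $\eps_k$. This yields
\[
\dist\big(x^{k+1},T_{\beta_k,p}(x^k)\big)^s\le c_{\rm mg}^{-1}\eps_k .
\]
This step is legitimate because $\inf Q_k$ is finite and attained, as noted in the remark following Definition~\ref{def:inexact_defs}.

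Third, I will take $s$-th roots. Since $s\ge1$, the map $t\mapsto t^{1/s}$ is monotone on $[0,\infty)$, so we obtain \eqref{eq:value-to-metric-bound}. The set $T_{\beta_k,p}(x^k)$ is nonempty, so the distance is finite and the manipulation is valid.

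There is no genuine obstacle here. The only point requiring care is checking that the hypotheses place $(x^k,x^{k+1},\beta_k)$ inside the region $U\times V\times I$ where \eqref{eq:metric-growth} holds. If $\eps_k=0$, the bound reduces to $x^{k+1}\in T_{\beta_k,p}(x^k)$, up to closedness of the proximal set; this is consistent, since a zero distance is exactly what the inequality delivers.
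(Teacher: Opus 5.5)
Your proposal is correct and matches the paper's proof: apply \eqref{eq:metric-growth} at $(x,y,\beta)=(x^k,x^{k+1},\beta_k)$, bound the model gap by $\eps_k$ via \eqref{eq:value-gap}, and take $s$-th roots. Two small remarks: monotonicity of $t\mapsto t^{1/s}$ holds for every $s>0$, not only $s\ge1$; and $T_{\beta_k,p}(x^k)$ is automatically closed as the argmin set of the lower semicontinuous $Q_k$, so your hedge about closedness in the case $\eps_k=0$ is unnecessary.
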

\begin{proof}
By Assumption~\ref{ass:metric-growth}, applied with $x=x^k$, $y=x^{k+1}$, and $\beta=\beta_k$, we have
\[
c_{\rm mg}\dist(x^{k+1},T_{\beta_k,p}(x^k))^s\le Q_k(x^{k+1})-\inf Q_k\le \eps_k.
\]
Taking the $s$-th root gives our desired result.
\end{proof}

\subsection{Inexact high-order proximal-point algorithm}
We now state the inexact high-order proximal-point method (HiPPA) in Algorithm~\ref{alg:ihippa}. The algorithm is written using a model-value gap oracle. In implementations, this oracle can be replaced by a computable certificate, such as a certified dual gap, a valid lower bound on the proximal model, or a residual stopping rule when an error bound for the proximal model is available. The convergence analysis below makes explicit which type of
inexactness is needed for each conclusion.

\begin{algorithm}[h]
\caption{HiPPA (Inexact High-Order Proximal-Point Algorithm)}
\label{alg:ihippa}
\begin{algorithmic}[1]
\State \textbf{Input:}  initial point $x^0\in\dom h$, order $p>1$, bounds $0<\beta_{\min}\le \beta_{\max}<\infty$, tolerances 
$\{\varepsilon_k\}_{k\in\Nz}$.
\While{stopping criteria are not satisfied}
    \State Choose $\beta_k\in[\beta_{\min},\beta_{\max}]$.
    \State Compute $x^{k+1}\in\dom h$ such that
    \[
        Q_{x^k,\beta_k}(x^{k+1})\le \inf_y Q_{x^k,\beta_k}(y)+\eps_k.
    \]
\EndWhile
\end{algorithmic}
\end{algorithm}

Before proving convergence, we collect three elementary estimates that will be used repeatedly. The first one follows directly from the model value-gap condition and gives the basic descent mechanism of the method.
\begin{lemma}[\textbf{Model-value gap descent inequality}]\label{lem:value-gap}
Let $\xb\in X^\star$, and let $\{x^k\}_{k\in\Nz}$ be generated by Algorithm~\ref{alg:ihippa}. Then, for every $k\ge 0$,
\begin{equation}\label{eq:value-gap-main}
 h(x^{k+1})-h(\xb)\le \frac{1}{p\beta_k}\left(\norm{x^k-\xb}^p-\norm{x^{k+1}-x^k}^p\right)+\varepsilon_k.
\end{equation}
Moreover, Moreover, for every $N\ge1$,
\begin{equation}\label{eq:value-gap-sum}
 \sum_{k=0}^{N-1}\frac{1}{p\beta_k}\norm{x^{k+1}-x^k}^p \le h(x^0)-h(\xb)+\sum_{k=0}^{N-1}\varepsilon_k.
\end{equation}
\end{lemma}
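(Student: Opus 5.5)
The plan is to read both inequalities directly off the model-value gap condition \eqref{eq:value-gap}, using its consequence \eqref{eq:upper-model} with two different comparison points. No quasar-convexity is needed; only the definition of $Q_k$ and the fact that $\xb$ is a global minimizer enter.

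For \eqref{eq:value-gap-main}, I will take $z=\xb$ in \eqref{eq:upper-model}. This gives
\[
h(x^{k+1})+\tfrac{1}{p\beta_k}\norm{x^{k+1}-x^k}^p\le h(\xb)+\tfrac{1}{p\beta_k}\norm{\xb-x^k}^p+\eps_k.
\]
Moving $h(\xb)$ and the displacement term to the appropriate sides yields \eqref{eq:value-gap-main}; this is exactly \eqref{eq:compare-opt} with $h^\star=h(\xb)$. The only point to check is that all quantities are finite. This holds because $x^k,x^{k+1}\in\dom h$ by construction of Algorithm~\ref{alg:ihippa}, and $h(\xb)=h^\star$ is finite since $h$ is proper and $X^\star\neq\emptyset$.

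For \eqref{eq:value-gap-sum}, I will take $z=x^k$ instead, which gives the descent estimate \eqref{eq:descent-gap}:
\[
\tfrac{1}{p\beta_k}\norm{x^{k+1}-x^k}^p\le h(x^k)-h(x^{k+1})+\eps_k.
\]
Summing over $k=0,\dots,N-1$ telescopes the function values to $h(x^0)-h(x^N)+\sum_{k=0}^{N-1}\eps_k$. Since $\xb\in X^\star$, we have $h(x^N)\ge h(\xb)$, so $-h(x^N)\le -h(\xb)$, and \eqref{eq:value-gap-sum} follows.

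There is no substantive obstacle here. The only care needed is to keep every term finite so that the rearrangements and the telescoping are legitimate, and the membership $x^k\in\dom h$ guarantees this.
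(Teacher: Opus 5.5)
Your proposal is correct and follows the paper's proof: \eqref{eq:value-gap-main} is \eqref{eq:compare-opt}, i.e.\ \eqref{eq:upper-model} with $z=\xb$, and \eqref{eq:value-gap-sum} comes from summing the descent estimate \eqref{eq:descent-gap}. You also spell out the telescoping step with $-h(x^N)\le -h(\xb)$ and the finiteness checks, which the paper leaves implicit.
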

\begin{proof}
The inequality \eqref{eq:value-gap-main} is exactly \eqref{eq:compare-opt}. Summing \eqref{eq:descent-gap} from $k=0$ to $N-1$ yields \eqref{eq:value-gap-sum}.
\end{proof}
The next estimate is a perturbed Fej\'{e}r-type inequality. It is obtained by applying the residual-to-gap inequality to the approximate optimality condition of the proximal subproblem. The signed form keeps the useful geometry of the
proximal displacement, while the norm form gives a simpler bound.
\begin{lemma}[\textbf{Perturbed one-step inequality}]\label{lem:master}
Assume that $h$ is locally Lipschitz and $(\kappa,\gamma)$-strongly quasar-convex with respect to $\xb\in X^\star$. Let $x^k,x^{k+1}\in\dom h$, and suppose that $x^{k+1}$ admits a residual $r^{k+1}$ satisfying \eqref{eq:residual-inclusion}. Define $d^k:=x^{k+1}-x^k$. Then, for every $k\in\Nz$,
\begin{align}\label{eq:master-signed}
&\kappa\left(h(x^{k+1})-h(\xb)\right)+\frac{\kappa\gamma}{2}\norm{x^{k+1}-\xb}^2
\notag\\&~~~~~~~~~~~~~~~\le \frac{1}{\beta_k}\norm{d^k}^{p-2}\ip{d^k}{\xb-x^{k+1}}+\norm{r^{k+1}}\,\norm{x^{k+1}-\xb}.
\end{align}
Moreover,
\begin{align}\label{eq:master-basic}
&\kappa\left(h(x^{k+1})-h(\xb)\right)+\frac{\kappa\gamma}{2}\norm{x^{k+1}-\xb}^2
\notag\\&~~~~~~~~~~~~~~~\le \frac{1}{\beta_k}\norm{d^k}^{p-1}\norm{x^{k+1}-\xb}+\norm{r^{k+1}}\,\norm{x^{k+1}-\xb}.
\end{align}
If $p=2$, then the sharper identity,
\begin{align}\label{eq:master-p2}
&2\beta_k\kappa\left(h(x^{k+1})-h(\xb)\right)+\beta_k\kappa\gamma\norm{x^{k+1}-\xb}^2
\notag\\&~~~~~~~~~~~~~~~\le \norm{x^k-\xb}^2-\norm{x^{k+1}-\xb}^2-\norm{d^k}^2+2\beta_k\norm{r^{k+1}}\,\norm{x^{k+1}-\xb}.
\end{align}
holds.
\end{lemma}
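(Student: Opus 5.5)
The plan is to obtain \eqref{eq:master-signed} and \eqref{eq:master-basic} by specializing Proposition~\ref{prop:stability-approx-prox}, and then to get \eqref{eq:master-p2} from the signed form via the three-point identity. Throughout I read $\Jp(d)=\norm{d}^{p-2}d$, which is the form used in \eqref{eq:prox-residual-static}. For $d=0$ we set $\Jp(0)=0$; this is harmless since $p>1$.

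\textbf{Step 1: the signed and norm bounds.} I would set $x:=x^k$, $y:=x^{k+1}$, $\beta:=\beta_k$ and $e:=r^{k+1}$. The inclusion \eqref{eq:residual-inclusion} then reads
\[
e\in\partial^C h(y)+\tfrac1\beta\norm{y-x}^{p-2}(y-x),
\]
which is exactly \eqref{eq:prox-residual-static}. The hypotheses of Proposition~\ref{prop:stability-approx-prox} hold: $h$ is locally Lipschitz and $(\kappa,\gamma)$-strongly quasar-convex with respect to $\xb$, and $y\in\dom h$. With $d^k=y-x$, \eqref{eq:stability-prox-signed} becomes \eqref{eq:master-signed} verbatim. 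Likewise \eqref{eq:stability-prox} becomes \eqref{eq:master-basic}, using $\norm{x-y}=\norm{d^k}$.

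\textbf{Step 2: the case $p=2$.} Here the factor $\norm{d^k}^{p-2}$ equals $1$, so the proximal term in \eqref{eq:master-signed} is $\frac1{\beta_k}\ip{d^k}{\xb-x^{k+1}}$. I would apply the polarization identity $2\ip{a}{b}=\norm{a+b}^2-\norm{a}^2-\norm{b}^2$ with $a=x^{k+1}-x^k$ and $b=\xb-x^{k+1}$. Since $a+b=\xb-x^k$, this gives
\[
2\ip{d^k}{\xb-x^{k+1}}=\norm{x^k-\xb}^2-\norm{d^k}^2-\norm{x^{k+1}-\xb}^2 .
\]
Multiplying \eqref{eq:master-signed} by $2\beta_k>0$ and substituting this identity yields \eqref{eq:master-p2}.

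\textbf{Main obstacle.} There is no real obstacle. The only care points are keeping the sign of the inner product when passing from the signed form, and checking that the convention for $\Jp$ matches the residual term in Proposition~\ref{prop:stability-approx-prox}. The case $d^k=0$ is covered trivially.
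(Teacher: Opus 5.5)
Your proposal is correct and matches the paper's proof: specialize Proposition~\ref{prop:stability-approx-prox} with $x=x^k$, $y=x^{k+1}$, $\beta=\beta_k$, $e=r^{k+1}$ to obtain \eqref{eq:master-signed} and \eqref{eq:master-basic}. For $p=2$, substitute $2\ip{d^k}{\xb-x^{k+1}}=\norm{x^k-\xb}^2-\norm{x^{k+1}-\xb}^2-\norm{d^k}^2$ and multiply by $2\beta_k$. Your reading $\Jp(d)=\norm{d}^{p-2}d$ with $\Jp(0)=0$ is the convention the paper uses implicitly, so nothing is missing.
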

\begin{proof}
Applying Proposition~\ref{prop:stability-approx-prox} with $x=x^k$ and $y=x^{k+1}$ yields
\[
\kappa\left(h(x^{k+1})-h(\xb)\right)+\frac{\kappa\gamma}{2}\norm{x^{k+1}-\xb}^2\le \frac{1}{\beta_k}\norm{d^k}^{p-2}\ip{d^k}{\xb-x^{k+1}}+\norm{r^{k+1}}\,\norm{x^{k+1}-\xb},
\]
which gives \eqref{eq:master-signed}. Estimate the transport term by Cauchy--Schwarz to obtain \eqref{eq:master-basic}. If $p=2$, then
\[
2\ip{d^k}{\xb-x^{k+1}}=\norm{x^k-\xb}^2-\norm{x^{k+1}-\xb}^2-\norm{d^k}^2,
\]
and multiplying \eqref{eq:master-signed} by $2\beta_k$ yields \eqref{eq:master-p2}.
\end{proof}

The previous lemma is stated under residual inexactness. Combining it with Proposition~\ref{prop:value-to-residual} gives the corresponding version for the model-value gap steps.

\begin{corollary}[\textbf{Model-value gap version of the perturbed one-step inequality}]\label{cor:value-gap-fejer}
Assume that $h$ is locally Lipschitz and $(\kappa,\gamma)$-strongly quasar-convex with respect to $\xb\in X^\star$. Let $x^k,x^{k+1}\in B$, where $B$ is compact, and let $\beta_k\in I\subset(0,\infty)$, where $I$ is compact.
Suppose Assumption~\ref{ass:residual-eb} holds on $(B,I)$, and suppose that $x^{k+1}$ satisfies the model-value gap condition \eqref{eq:value-gap}. Then there exists
\[
 r^{k+1}\in  \partial^C h(x^{k+1}) + \frac1{\beta_k}\Jp(x^{k+1}-x^k)
\]
such that
\[
\norm{r^{k+1}}\le L_{B,I}\sqrt{\eps_k}.
\]
Consequently, all inequalities in Lemma~\ref{lem:master} hold with this residual bound.
\end{corollary}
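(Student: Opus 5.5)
The corollary follows by composing two results already established: Proposition~\ref{prop:value-to-residual} produces the residual, and Lemma~\ref{lem:master} is then applied with that residual. The plan is to check that every hypothesis of each result is met in the present setting and then chain them.

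\emph{Step 1: residual bound.} The standing hypotheses give $x^k,x^{k+1}\in B$, $\beta_k\in I$, Assumption~\ref{ass:residual-eb} on $(B,I)$, and the model-value gap condition \eqref{eq:value-gap}. These are exactly the hypotheses of Proposition~\ref{prop:value-to-residual}. It yields $r^{k+1}\in\partial^C h(x^{k+1})+\frac1{\beta_k}\Jp(x^{k+1}-x^k)$ with $\norm{r^{k+1}}\le L_{B,I}\sqrt{\eps_k}$.

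\emph{Step 2: domain membership and the residual step.} Assumption~\ref{ass:residual-eb} says $h$ is finite and locally Lipschitz on an open neighborhood of $B$. Hence $x^k,x^{k+1}\in\dom h$, and $h$ is locally Lipschitz around $x^{k+1}$. So $x^{k+1}$ is a residual inexact proximal step in the sense of Definition~\ref{def:inexact_defs}\ref{def:residual}, with tolerance $\eta_k=L_{B,I}\sqrt{\eps_k}$.

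\emph{Step 3: apply Lemma~\ref{lem:master}.} The lemma needs $h$ locally Lipschitz and $(\kappa,\gamma)$-strongly quasar-convex with respect to $\xb$, which is assumed, together with $x^k,x^{k+1}\in\dom h$ and the residual inclusion, both verified above. Its inequalities \eqref{eq:master-signed}, \eqref{eq:master-basic} and, for $p=2$, \eqref{eq:master-p2} therefore hold. In each, $\norm{r^{k+1}}$ may be replaced by its upper bound $L_{B,I}\sqrt{\eps_k}$: the residual appears only in the nonnegative term $\norm{r^{k+1}}\norm{x^{k+1}-\xb}$ on the right-hand side (multiplied by $2\beta_k>0$ in the $p=2$ case), so enlarging it preserves the inequalities.

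\emph{Main obstacle.} There is no substantive difficulty; the only care needed is bookkeeping. One must confirm that $\Jp$ denotes the same map $\norm{u}^{p-2}u$ used in Proposition~\ref{prop:stability-approx-prox}. One must also confirm that the compact set $I$ here coincides with $[\beta_{\min},\beta_{\max}]$, or at least is the interval on which Assumption~\ref{ass:residual-eb} was posed, so that Proposition~\ref{prop:value-to-residual} applies as stated.
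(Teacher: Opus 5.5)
Your proposal is correct and follows the paper's proof: the residual and its bound $L_{B,I}\sqrt{\eps_k}$ come from Proposition~\ref{prop:value-to-residual}, and Lemma~\ref{lem:master} is then applied with that residual. Your additional checks — membership of the iterates in $\dom{h}$ via Assumption~\ref{ass:residual-eb}, and that enlarging $\norm{r^{k+1}}$ to its upper bound preserves each inequality — are details the paper leaves implicit.
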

\begin{proof}
The residual existence and the bound $\norm{r^{k+1}}\le L_{B,I}\sqrt{\eps_k}$ follow from Proposition~\ref{prop:value-to-residual}. Applying Lemma~\ref{lem:master} with this residual gives the stated inequalities.
\end{proof}


We now analyze Algorithm~\ref{alg:ihippa}. The first result uses only the
model-value gap condition, the summability of the errors, and the uniform upper bound on the proximal parameters 
$\{\beta_k\}_{k\in\Nz}$.

\begin{theorem}[\textbf{Descent and step summability}]\label{thm:global-quasi}
Let Assumption~\ref{ass:basic} hold, and let $\{x^k\}_{k\in\Nz}$ be generated by Algorithm~\ref{alg:ihippa}.  If
\begin{equation}\label{eq:summable-forcing}
 \sum_{k=0}^\infty \varepsilon_k<\infty.
\end{equation} 
Then, the following statements hold:
\begin{enumerate}[label=(\textbf{\alph*}), font=\normalfont\bfseries, leftmargin=0.7cm]
    \item\label{thm:global-quasi:a} For each $k\in\Nz$, $Q_k(x^{k+1})\le Q_k(x^{k})+\varepsilon_k$ and 
    $h(x^{k+1})\leq h(x^{k})+\varepsilon_k$;
    \item\label{thm:global-quasi:b} We have $\sum_{k=0}^\infty \norm{x^{k+1}-x^k}^p<\infty$ and in particular, $\norm{x^{k+1}-x^k}\to 0$.
\end{enumerate}
\end{theorem}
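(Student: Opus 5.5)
The plan is to derive everything from the model-value gap condition \eqref{eq:value-gap}, in the form \eqref{eq:upper-model} with the test point $z=x^k$, and then telescope using the lower bound $h\ge h^\star=h(\xb)$. No quasar-convexity is needed for this theorem. Only properness, $X^\star\neq\emptyset$, and the bounds \eqref{eq:beta-bounds} on $\beta_k$ enter.

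For part \ref{thm:global-quasi:a}, I will plug $z=x^k$ into \eqref{eq:upper-model}. Since $Q_k(x^k)=h(x^k)+\frac{1}{p\beta_k}\norm{x^k-x^k}^p=h(x^k)$, this gives $Q_k(x^{k+1})\le Q_k(x^k)+\varepsilon_k$. Expanding $Q_k(x^{k+1})$ yields exactly the descent estimate \eqref{eq:descent-gap}. Dropping the nonnegative term $\frac{1}{p\beta_k}\norm{x^{k+1}-x^k}^p$ then gives $h(x^{k+1})\le h(x^k)+\varepsilon_k$.

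For part \ref{thm:global-quasi:b}, I will invoke \eqref{eq:value-gap-sum} of Lemma~\ref{lem:value-gap}. Its proof is the telescoping of \eqref{eq:descent-gap} from $k=0$ to $N-1$, combined with $h(x^N)\ge h(\xb)$. Since $\beta_k\le\beta_{\max}$, we have $\frac{1}{p\beta_k}\ge\frac{1}{p\beta_{\max}}$, and therefore
\[
\sum_{k=0}^{N-1}\norm{x^{k+1}-x^k}^p\le p\beta_{\max}\Big(h(x^0)-h(\xb)+\sum_{k=0}^{\infty}\varepsilon_k\Big).
\]
The right-hand side is finite: $x^0\in\dom h$ makes $h(x^0)<\infty$, $h(\xb)$ is finite because $h$ is proper, and \eqref{eq:summable-forcing} bounds the error sum. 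The partial sums of a nonnegative series are thus uniformly bounded, so $\sum_k\norm{x^{k+1}-x^k}^p<\infty$. It follows that $\norm{x^{k+1}-x^k}^p\to0$, and hence $\norm{x^{k+1}-x^k}\to0$ because $p>1$.

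There is no real obstacle here. The only point needing care is that all quantities are finite, so the telescoping is legitimate. Indeed, every iterate lies in $\dom h$ by construction, and $\inf Q_k$ is finite and attained by the coercivity remark following \eqref{eq:Qxbeta}.
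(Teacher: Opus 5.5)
Your proposal is correct and follows essentially the same route as the paper: part (a) from the descent estimate \eqref{eq:descent-gap} (i.e., \eqref{eq:upper-model} with $z=x^k$, using $Q_k(x^k)=h(x^k)$), and part (b) from the telescoped bound \eqref{eq:value-gap-sum} together with $\beta_k\le\beta_{\max}$ and \eqref{eq:summable-forcing}. Your remarks on finiteness of $h(x^0)$ and $h(\xb)$, and that quasar-convexity is not used here, are accurate.
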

\begin{proof}
Assertion~\ref{thm:global-quasi:a} follows from the descent estimate \eqref{eq:descent-gap}. For Assertion~\ref{thm:global-quasi:b}, Lemma~\ref{lem:value-gap} implies, for
every $N\ge1$,
\[
 \sum_{k=0}^{N-1} \frac{1}{p\beta_k}\norm{x^{k+1}-x^k}^p \le h(x^0)-h(\xb)+\sum_{k=0}^{N-1}\eps_k .
\]
Since $\xb\in X^\star$, $h(\xb)=h^\star$, and since $\sum_{k=0}^{\infty}\eps_k<\infty$, the right-hand side is uniformly bounded
in $N$. In light of $\beta_k\le \beta_{\max}$, it can be concluded that
\[
 \frac{1}{p\beta_{\max}} \sum_{k=0}^{N-1}\norm{x^{k+1}-x^k}^p \le h(x^0)-h^\star+\sum_{k=0}^{\infty}\eps_k .
\]
Letting $N\to\infty$ yields
\[
\sum_{k=0}^{\infty}\norm{x^{k+1}-x^k}^p<\infty,
\]
and consequently we come to $\norm{x^{k+1}-x^k}\to0$, giving our desired result.
\end{proof}
The next result identifies the cluster points of the inexact HiPPA sequence. The main mechanism is residual stationarity: once the steps vanish and the residuals go to zero, every cluster point is Clarke critical, and quasar-convexity then implies global optimality. We first state this mechanism under residual inexactness and then apply it to model-value gap steps through Proposition~\ref{prop:value-to-residual}.

We first provide the following auxiliary result.
\begin{lemma}[\textbf{Cluster optimality under residual inexactness}]\label{lem:cluster-opt}
Let $h$ be locally Lipschitz on an open set containing a bounded sequence $\{x^k\}_{k\in\Nz}$, and suppose that $h$ is $\kappa$-quasar-convex with respect to $\xb\in X^\star$.  Assume \eqref{eq:beta-bounds},
that $\norm{x^{k+1}-x^k}\to 0$, and suppose there exist residuals $r^{k+1}$ satisfying \eqref{eq:residual-inclusion} with $\norm{r^{k+1}}\to0$.  Then every cluster point of $\{x^k\}_{k\in\Nz}$ belongs to $X^\star$.  If $X^\star=\{\xb\}$, then $x^k\to\xb$.
\end{lemma}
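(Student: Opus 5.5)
The plan is to pass to the limit in the residual inclusion \eqref{eq:residual-inclusion} along a convergent subsequence. Closedness of the Clarke subdifferential will give a Clarke critical point, and Fact~\ref{crit:are:glob} will then identify it as a global minimizer.

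Let $\widehat x$ be a cluster point, with $x^{j}\to\widehat x$ along an infinite set $J\subseteq\Nz$. Because $\norm{x^{k+1}-x^k}\to0$, we also have $x^{j+1}\to\widehat x$ along $J$. We use $\Jp(d)=\norm{d}^{p-2}d$, so that $\norm{\Jp(d)}=\norm{d}^{p-1}$; this tends to $0$ as $d\to0$ since $p>1$. By \eqref{eq:residual-inclusion}, for each $j\in J$ there is
\[
v^{j+1}\in\partial^C h(x^{j+1}),\qquad v^{j+1}=r^{j+1}-\tfrac1{\beta_j}\Jp(x^{j+1}-x^j).
\]
Since $\beta_j\ge\beta_{\min}>0$, we get $\norm{v^{j+1}}\le\norm{r^{j+1}}+\beta_{\min}^{-1}\norm{x^{j+1}-x^j}^{p-1}\to0$.

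The main point to check is closedness. The function $h$ is locally Lipschitz on an open set containing the sequence. I would also need $\widehat x$ to lie in that open set, either by taking the set to contain the closure of the bounded sequence or by noting that local Lipschitzness near $\widehat x$ is implicit in the hypothesis. Granting this, the Clarke subdifferential is outer semicontinuous (has closed graph) at $\widehat x$. Also, $h$ is continuous near $\widehat x$, so $\widehat x\in\dom h$. Passing to the limit in $(x^{j+1},v^{j+1})\to(\widehat x,0)$ gives $0\in\partial^C h(\widehat x)$. Fact~\ref{crit:are:glob} with $\gamma=0$ requires quasar-convexity with respect to $\xb$ and local Lipschitzness on $\dom h$. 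The latter is exactly what that fact uses, through the characterization in Fact~\ref{fact:char:clarke} evaluated at the single point $\widehat x$. Applying the inequality \eqref{eq:foC} directly at $x=\widehat x$ with $v=0$ gives $h(\xb)\ge h(\widehat x)$. Hence $\widehat x\in X^\star$. I would argue this step via \eqref{eq:foC} at $\widehat x$ so that only local Lipschitzness near $\widehat x$ is used.

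For the last claim, suppose $X^\star=\{\xb\}$. The sequence is bounded, and every cluster point equals $\xb$. If $x^k\not\to\xb$, there would be a subsequence staying at distance at least $\epsilon>0$ from $\xb$. By boundedness it would have a further convergent subsequence, whose limit is a cluster point different from $\xb$, a contradiction. Hence $x^k\to\xb$.
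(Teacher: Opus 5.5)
Your proposal is correct and follows the paper's proof essentially step for step. The vanishing steps and residuals force the Clarke subgradients at $x^{k_j+1}$ to $0$, outer semicontinuity gives $0\in\partial^C h(\widehat x)$, Fact~\ref{crit:are:glob} (equivalently \eqref{eq:foC} with $v=0$) gives $\widehat x\in X^\star$, and boundedness upgrades this to $x^k\to\xb$ when $X^\star=\{\xb\}$. You flag that $h$ must be locally Lipschitz near $\widehat x$ itself, not only near the iterates; the paper's appeal to outer semicontinuity uses this tacitly too, and it holds where the lemma is applied (in Theorem~\ref{thm:cluster-value-gap} the compact set $B$ contains every cluster point), so it is not a gap relative to the paper's argument.
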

\begin{proof}
Let $\widehat x$ be a cluster point, and choose a subsequence $x^{k_j}\to\widehat x$.  Since $\norm{x^{k+1}-x^k}\to0$, we also have $x^{k_j+1}\to\widehat x$.  From \eqref{eq:residual-inclusion}, choose $\xi^{k+1}\in\partial^C h(x^{k+1})$ such that
\[
r^{k+1}=\xi^{k+1}+\frac1{\beta_k}\Jp(x^{k+1}-x^k).
\]
It light of $\beta_k\ge\beta_{\min}>0$, it is clear that
\[
\left\|\frac1{\beta_k}\Jp(x^{k+1}-x^k)\right\|\le\frac1{\beta_{\min}}\norm{x^{k+1}-x^k}^{p-1}\to 0.
\]
Since $\norm{r^{k+1}}\to0$, it follows that
$\xi^{k_j+1}\to0$. The outer semicontinuity of $\partial^C h$ gives $0\in\partial^C h(\hat x)$.  Fact~\ref{crit:are:glob} implies $\widehat x\in X^\star$.  If $X^\star=\{\xb\}$, every cluster point is $\xb$, and the boundedness implies $x^k\to\xb$.
\end{proof}

\begin{theorem}[\textbf{Global convergence under model-value gap errors}]\label{thm:cluster-value-gap}
Let Assumption~\ref{ass:basic} hold, and let $\{x^k\}_{k\in\Nz}$ be generated by Algorithm~\ref{alg:ihippa}.
Assume \eqref{eq:summable-forcing} and suppose that the sequence 
$\{x^k\}_{k\in\Nz}$ is bounded.  Let $B$ be a compact set containing the iterates, and assume that Assumption~\ref{ass:residual-eb} holds on $(B, I)$.  Then every cluster point of $\{x^k\}_{k\in\Nz}$ is globally optimal.
Moreover, $h(x^k)\to h^\star$. If $X^\star=\{\xb\}$, then $x^k\to\xb$.
\end{theorem}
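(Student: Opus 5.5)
The plan is to reduce everything to Lemma~\ref{lem:cluster-opt} via Theorem~\ref{thm:global-quasi} and Proposition~\ref{prop:value-to-residual}. Theorem~\ref{thm:global-quasi}\ref{thm:global-quasi:b} already gives $\norm{x^{k+1}-x^k}\to0$, and the bounds \eqref{eq:beta-bounds} hold by Assumption~\ref{ass:basic}. Since all iterates lie in the compact set $B$ and $\beta_k\in I=[\beta_{\min},\beta_{\max}]$, Proposition~\ref{prop:value-to-residual} produces, for every $k$, residuals
\[
r^{k+1}\in\partial^C h(x^{k+1})+\tfrac1{\beta_k}\Jp(x^{k+1}-x^k), \qquad \norm{r^{k+1}}\le L_{B,I}\sqrt{\eps_k}.
\]
Summability \eqref{eq:summable-forcing} forces $\eps_k\to0$, so $\norm{r^{k+1}}\to0$. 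Assumption~\ref{ass:residual-eb} also gives that $h$ is locally Lipschitz on an open neighborhood of $B$, which contains the bounded sequence. Quasar-convexity with respect to $\xb$ (strong quasar-convexity implies the $\gamma=0$ case) is part of Assumption~\ref{ass:basic}. Lemma~\ref{lem:cluster-opt} therefore applies and yields that every cluster point lies in $X^\star$, and that $x^k\to\xb$ when $X^\star=\{\xb\}$.

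The remaining claim, $h(x^k)\to h^\star$, needs separate care, and this is the main obstacle: lower semicontinuity only gives $\liminf h(x^{k_j})\ge h(\widehat x)=h^\star$ along a subsequence, which is the wrong direction. I would establish the upper bound in two ways and use whichever is cleaner.

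\emph{Continuity route.} Since $h$ is locally Lipschitz, hence continuous, on an open neighborhood of the compact set $B$, along any subsequence $x^{k_j}\to\widehat x\in X^\star$ we get $h(x^{k_j})\to h(\widehat x)=h^\star$. Because the sequence is bounded, every subsequence has a further convergent subsequence, and every such limit is optimal by the first part. Hence $h(x^k)\to h^\star$ along the full sequence.

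\emph{Quasi-Fej\'er route (backup).} Use that $\{h(x^k)+\sum_{i\ge k}\eps_i\}$ is nonincreasing by Theorem~\ref{thm:global-quasi}\ref{thm:global-quasi:a} and bounded below by $h^\star$, so $h(x^k)$ converges. Its limit equals $h^\star$ along a subsequence by continuity, hence along the whole sequence.

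Both arguments hinge on continuity of $h$ near the cluster points. This is guaranteed because the cluster points lie in $B$, and $h$ is finite and locally Lipschitz on an open neighborhood of $B$. I expect this to suffice, with no delicate estimate required.
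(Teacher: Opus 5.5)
Your proposal is correct and matches the paper's proof. The first part is the same chain Theorem~\ref{thm:global-quasi} $\to$ Proposition~\ref{prop:value-to-residual} $\to$ Lemma~\ref{lem:cluster-opt}, and for $h(x^k)\to h^\star$ the paper uses your backup argument: the nonincreasing sequence $h(x^k)+\sum_{j\ge k}\eps_j$, with its limit identified via continuity of $h$ at one optimal cluster point. Your primary subsequence-of-subsequence route is also valid, because every cluster point lies in the closed set $B$, where $h$ is continuous. That route has the small advantage of not needing the descent inequality at all.
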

\begin{proof}
By Theorem~\ref{thm:global-quasi}, we have $\norm{x^{k+1}-x^k}\to0$.
Since $\sum_{k=0}^\infty\eps_k<\infty$, we also have $\eps_k\to0$.
By Proposition~\ref{prop:value-to-residual}, for every $k$ there exists
\[
 r^{k+1} \in \partial^C h(x^{k+1}) + \frac1{\beta_k}\Jp(x^{k+1}-x^k)
\]
such that
\[
        \norm{r^{k+1}}\le L_{B,I}\sqrt{\eps_k}.
\]
Hence, we come to $\norm{r^{k+1}}\to 0$.
Lemma~\ref{lem:cluster-opt} then implies that every cluster point of $\{x^k\}_{k\in\Nz}$ belongs to $X^\star$. If $X^\star=\{\xb\}$, the same lemma gives $x^k\to\xb$.

It remains to verify the convergence of the function values. Let us define the tail error as
\[
E_k:=\sum_{j=k}^{\infty}\varepsilon_j .
\]
Since $\sum_k\varepsilon_k<+\infty$, we have $E_k<+\infty$ and $E_k\to 0$. In addition, we have
\[
        E_k=\varepsilon_k+E_{k+1}.
\]
It follows from the descent estimate \eqref{eq:descent-gap} that
\[
 h(x^{k+1})+E_{k+1} \le h(x^k)+\varepsilon_k+E_{k+1} = h(x^k)+E_k .
\]
Thus, the sequence $a_k:=h(x^k)+E_k$ is nonincreasing. Since $h(x^k)\ge h^\star$, we also have $a_k\ge h^\star$. Hence $\{a_k\}_{k\in\Nz}$ converges to some finite number $\ell\ge h^\star$. Because $E_k\to 0$, it follows that
\[
        h(x^k)=a_k-E_k\to \ell .
\]
Now let $\hat x$ be any cluster point and take a subsequence $x^{k_j}\to \hat x$. By the first part of the proof, $\widehat x\in X^\star$. Since Assumption~\ref{ass:residual-eb} implies that $h$ is locally Lipschitz on an open neighborhood of $B$, $h$ is continuous
at $\widehat x$. Hence $ h(x^{k_j})\to h(\widehat x)=h^\star$.
Since the whole sequence $h(x^k)$ converges to $\ell$, we conclude that $\ell=h^\star$. Therefore, it can be concluded that $h(x^k)\to h^\star$, giving our desired result.
\end{proof}

We now consider the local rate regime. When $\gamma>0$, strong quasar-convexity implies that the minimizer is unique; we denote it by $\xb$. The next assumption summarizes the local stability properties needed for the inexact high-order proximal map. The constants may depend on the local geometry
of $h$, the order $p$, and the admissible interval for $\beta_k$.

\begin{assumption}[\textbf{Perturbed local recursions}]\label{ass:perturbed}
Set $r_k:=\norm{x^k-\xb}$. Assume that $h$ is $(\kappa,\gamma)$-strongly quasar-convex with respect to $\ov x$, with $\gamma>0$, and that there exists a radius $R>0$ such that the iterates are eventually contained in $\mb(\xb,R)$.  Moreover, assume that
\begin{enumerate}[label=(\textbf{\alph*}), font=\normalfont\bfseries, leftmargin=0.7cm]
\item \label{ass:perturbed:a} For $p\in (1,2]$, there exist constants $\eta\in(0,1)$ and $C>0$ such that, for all sufficiently large $k$,
\begin{equation}\label{eq:perturbed-p<2}
 r_{k+1}^2\le \eta r_k^2 + C\varepsilon_k.
\end{equation}
\item \label{ass:perturbed:b} For $p>2$, there exist constants $a,b>0$ such that, for all sufficiently large $k$, 
\begin{equation}\label{eq:perturbed-p>2}
 r_{k+1}\le a r_k^{p-1}+b\sqrt{\varepsilon_k}.
\end{equation}
\end{enumerate}
\end{assumption}

Assumption~\ref{ass:perturbed} is a local stability condition for the inexact proximal map. It should not be interpreted as a replacement for the convergence analysis.
The next proposition gives a standard way to verify it from local rates
of the exact proximal map together with a metric inexactness estimate.

\begin{proposition}[\textbf{A verification route for the perturbed recursions}]\label{prop:exact-to-inexact}
Let
\[
y^{k+1}\in \argmint{y\in\R^n} \left\{h(y)+\frac{1}{p\beta_k}\norm{y-x^k}^p\right\}
\]
be an exact high-order proximal point, and suppose 
the computed point $x^{k+1}$ satisfy a metric inexactness estimate
\begin{equation}\label{eq:metric-inexactness-lift}
 \norm{x^{k+1}-y^{k+1}}\le \Delta_k .
\end{equation}
Then the following implications hold.
\begin{enumerate}[label=(\textbf{\alph*}), font=\normalfont\bfseries, leftmargin=0.7cm]
\item\label{prop:exact-to-inexact:a} Suppose $p\in(1,2]$ and the exact proximal step satisfies
\[
 \norm{y^{k+1}-\xb}^2\le \eta\norm{x^k-\xb}^2,
\]
for some $\eta\in(0,1)$. If
$\Delta_k^2\le \ov{C} \varepsilon_k$, for some $\ov{C}>0$, then Assumption~\ref{ass:perturbed}~\ref{ass:perturbed:a} holds for any $\eta'\in(\eta,1)$, after possibly increasing the constant $C$.

\item\label{prop:exact-to-inexact:b} 
Suppose $p>2$ and the exact proximal step satisfies
\[
 \norm{y^{k+1}-\xb}\le a\norm{x^k-\xb}^{p-1},
\]
for some $a>0$. If $\Delta_k\le \ov{C}\sqrt{\varepsilon_k}$, for some $\ov{C}>0$, then Assumption~\ref{ass:perturbed}~\ref{ass:perturbed:b} holds with constants depending only on $a$ and $\ov{C}$.
\end{enumerate}
\end{proposition}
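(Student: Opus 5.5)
The argument is a triangle inequality through the exact proximal point $y^{k+1}$, followed by an elementary absorption of the cross term (for \ref{prop:exact-to-inexact:a}) or a direct substitution (for \ref{prop:exact-to-inexact:b}). Write $r_k:=\norm{x^k-\xb}$ as in Assumption~\ref{ass:perturbed}. Using \eqref{eq:metric-inexactness-lift}, we have
\[
r_{k+1}\le \norm{y^{k+1}-\xb}+\norm{x^{k+1}-y^{k+1}}\le \norm{y^{k+1}-\xb}+\Delta_k .
\]
All remaining work is to insert the assumed exact-step rate and the bound on $\Delta_k$.

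For \ref{prop:exact-to-inexact:a}, square the triangle inequality and absorb the cross term with Young's inequality: for any $\theta>0$,
\[
r_{k+1}^2\le (1+\theta)\norm{y^{k+1}-\xb}^2+\left(1+\tfrac1\theta\right)\Delta_k^2 .
\]
Given $\eta'\in(\eta,1)$, choose $\theta:=\eta'/\eta-1>0$, so that $(1+\theta)\eta=\eta'$. The exact-step contraction $\norm{y^{k+1}-\xb}^2\le\eta r_k^2$ and the hypothesis $\Delta_k^2\le\ov C\varepsilon_k$ then give
\[
r_{k+1}^2\le \eta' r_k^2+C\varepsilon_k, \qquad C:=\left(1+\tfrac{\eta}{\eta'-\eta}\right)\ov C=\frac{\eta'}{\eta'-\eta}\,\ov C .
\]
This is exactly \eqref{eq:perturbed-p<2} with $\eta'$ in place of $\eta$ and the enlarged constant $C$. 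The recursion holds for all sufficiently large $k$, namely whenever the exact-step rate is in force, and Assumption~\ref{ass:perturbed} requires nothing more.

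For \ref{prop:exact-to-inexact:b}, no squaring is needed. Substituting $\norm{y^{k+1}-\xb}\le a r_k^{p-1}$ and $\Delta_k\le\ov C\sqrt{\varepsilon_k}$ into the triangle inequality gives
\[
r_{k+1}\le a r_k^{p-1}+\ov C\sqrt{\varepsilon_k}.
\]
This is \eqref{eq:perturbed-p>2} with constants $a$ and $b:=\ov C$, which depend only on $a$ and $\ov C$, as claimed.

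The only step that needs a choice is the absorption of the cross term $2\norm{y^{k+1}-\xb}\Delta_k$ in part \ref{prop:exact-to-inexact:a}. Young's inequality is the natural tool, and it explains why the contraction factor must be relaxed from $\eta$ to some $\eta'>\eta$. The price is that $C$ grows like $(\eta'-\eta)^{-1}$ as $\eta'\downarrow\eta$. Everything else is a direct substitution.
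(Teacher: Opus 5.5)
Your proposal is correct and follows essentially the same route as the paper's proof: the triangle inequality through $y^{k+1}$, Young's inequality with parameter $\theta$ chosen so that $(1+\theta)\eta=\eta'$ in part (a), and direct substitution with $b=\ov C$ in part (b). Your explicit constant $C=\frac{\eta'}{\eta'-\eta}\,\ov C$ is a small improvement in precision over the paper, which leaves this constant implicit.
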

\begin{proof}
\ref{prop:exact-to-inexact:a} By \eqref{eq:metric-inexactness-lift} and Young's inequality, for every $\theta>0$, we get
\[
 r_{k+1}^2=\norm{x^{k+1}-\xb}^2
 \le (1+\theta)\norm{y^{k+1}-\xb}^2+(1+\theta^{-1})\Delta_k^2.
\]
Let us choose $\theta>0$ small enough so that $(1+\theta)\eta<1$, set $\eta'=(1+\theta)\eta$, and use $\Delta_k^2\le \ov{C}\varepsilon_k$. This consequently gives Assumption~\ref{ass:perturbed}~\ref{ass:perturbed:a}. 
\\
\ref{prop:exact-to-inexact:b} Invoking the triangle inequality leads to
\[
 r_{k+1}\le \norm{y^{k+1}-\xb}+\Delta_k
 \le a r_k^{p-1}+\ov{C}\sqrt{\varepsilon_k},
\]
which is Assumption~\ref{ass:perturbed}~\ref{ass:perturbed:b}.
\end{proof}

We now derive local rates from the perturbed recursions in Assumption~\ref{ass:perturbed}. The result separates the two regimes: $1<p\le 2$, where a linear rate follows from a perturbed contraction, and $p>2$, where a compatible forcing rule yields superlinear convergence.

\begin{theorem}[\textbf{Convergence rates of inexact HiPPA}]\label{thm:conv-rates}
Let Assumption~\ref{ass:perturbed} hold, and let $x^k\to\xb$. Then, for all sufficiently large $k$, there exist $A, B>0$ such that
\[
\begin{cases}
\norm{x^{k}-\xb}\le A \rho^k,
&p\in(1,2],~~ {\rm if}~\exists \rho\in(\sqrt{\eta},1), E>0,~{\rm s.t.}~ \varepsilon_k\le E\, \rho^{2k},\\[2mm]
\|x^{k+1}-\xb\| \le B \|x^k-\xb\|^{p-1},
& p>2,~~ {\rm if}~\exists M>0, \delta>0,~{\rm s.t.}~ \sqrt{\varepsilon_k}\le M  \norm{x^k-\xb}^{p-1+\delta}.
\end{cases}
\]
Moreover, if $h$ is locally Lipschitz near $\ov x$, then for each $p>1$ there exists $C_h^p>0$ such that
\begin{equation}\label{eq:value-linear-p<2}
 h(x^k)-h^\star\le C_h^p \rho^k,
\end{equation}
for all sufficiently large $k$.
\end{theorem}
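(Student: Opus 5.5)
The plan is to treat the two regimes separately, using only the perturbed recursions of Assumption~\ref{ass:perturbed} and the forcing rules in the statement. We then convert distance rates into value rates with Proposition~\ref{prop:residual-gap} or with a local Lipschitz bound.

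\textbf{Case $p\in(1,2]$.} Fix $k_0$ such that \eqref{eq:perturbed-p<2} holds for all $k\ge k_0$. Unrolling gives
\[
r_k^2\le \eta^{k-k_0}r_{k_0}^2+C\sum_{j=k_0}^{k-1}\eta^{k-1-j}\eps_j .
\]
Insert $\eps_j\le E\rho^{2j}$. With $q:=\eta/\rho^2<1$ (this holds since $\rho>\sqrt\eta$), the sum is at most
\[
CE\rho^{2(k-1)}\sum_{i\ge0}q^{i}=\frac{CE\rho^{2(k-1)}}{1-q}.
\]
Also $\eta^{k-k_0}\le \rho^{2(k-k_0)}$. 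Hence $r_k^2\le A^2\rho^{2k}$ with $A^2:=\rho^{-2k_0}r_{k_0}^2+CE\rho^{-2}/(1-q)$. Taking square roots gives the linear bound.

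\textbf{Case $p>2$.} Substitute $\sqrt{\eps_k}\le M r_k^{p-1+\delta}$ into \eqref{eq:perturbed-p>2}. This gives $r_{k+1}\le (a+Mr_k^{\delta})r_k^{p-1}$. Since $x^k\to\xb$, we have $r_k\to0$, so eventually $r_k\le1$ and one may take $B:=a+M$. Because $p-1>1$, this is superlinear (order $p-1$) convergence.

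\textbf{Function values.} Near $\xb$, $h$ is Lipschitz with some constant $L$ on a ball containing the eventual iterates. Hence $h(x^k)-h^\star\le L r_k$. For $p\le2$ this gives $L A\rho^k$. For $p>2$, superlinear convergence implies $r_k\le A'\rho^k$ for any fixed $\rho\in(0,1)$ and large $k$: once $B r_k^{p-2}\le\rho$, we get $r_{k+1}\le \rho r_k$. The same bound then follows, with $C_h^p:=LA$ or $LA'$.

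\textbf{Main obstacle.} The main delicate point is the bookkeeping of the constants $k_0$ and $A$ so that the bound holds "for all sufficiently large $k$". The other is interpreting $\rho$ in \eqref{eq:value-linear-p<2} for $p>2$, where $\rho$ is not tied to the forcing rule; I would state that any $\rho\in(0,1)$ works there.
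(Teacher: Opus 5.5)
Your proof is correct and matches the paper's argument step for step: the same unrolling of the contraction with $q=\eta/\rho^2$ and the same constant $A$, the same absorption of the forcing term for $p>2$, and the same local Lipschitz bound for values, except that you dropped the factor $b$ from \eqref{eq:perturbed-p>2}, so the correct bound is $r_{k+1}\le (a+bM r_k^{\delta})r_k^{p-1}$ and you should take $B:=a+bM$. Your observation that superlinear convergence gives $r_k\le A'\rho^k$ for any fixed $\rho\in(0,1)$ is a useful addition, since for $p>2$ the paper's proof only records $h(x^k)-h^\star\le C_h^p\norm{x^k-\xb}$ and leaves the meaning of $\rho$ in \eqref{eq:value-linear-p<2} implicit.
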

\begin{proof}
We prove the two cases separately. Let us set $ r_k:=\|x^k-\xb\|$.

Suppose $p\in(1,2]$. By Assumption~\ref{ass:perturbed}\ref{ass:perturbed:a}, for all sufficiently large $k$,
\[
 r_{k+1}^2\le \eta r_k^2+C\eps_k .
\]
Let $k_0$ be large enough so that this recursion holds for all $k\ge k_0$, and so that
\[
\eps_k\le E\rho^{2k}, \quad k\ge k_0.
\]
Iterating the recursion for $k>k_0$ leads to
\[
\begin{aligned}
r_k^2&\le\eta^{k-k_0}r_{k_0}^2+C\sum_{j=k_0}^{k-1}\eta^{k-1-j}\eps_j  \\
&\le\eta^{k-k_0}r_{k_0}^2+CE\sum_{j=k_0}^{k-1}\eta^{k-1-j}\rho^{2j}.
\end{aligned}
\]
Due to the inequality $\eta<\rho^2$, the first term satisfies
\[
\eta^{k-k_0}r_{k_0}^2 \le\rho^{2(k-k_0)}r_{k_0}^2 = \rho^{-2k_0}r_{k_0}^2\rho^{2k}.
\]
For the convolution term, we write
\[
\sum_{j=k_0}^{k-1}\eta^{k-1-j}\rho^{2j}=\rho^{2k-2}\sum_{j=k_0}^{k-1}\left(\frac{\eta}{\rho^2}\right)^{k-1-j}\le\rho^{2k-2}\frac{1}{1-\eta/\rho^2},
\]
i.e.,
\[
r_k^2\le\left(\rho^{-2k_0}r_{k_0}^2+\frac{CE\rho^{-2}}{1-\eta/\rho^2}\right)\rho^{2k}.
\]
Consequently, there exists $A>0$ such that $r_k\le A\rho^k$ for all sufficiently large $k$.

If $h$ is locally Lipschitz near $\xb$, then there exists $L>0$ such that, for all sufficiently large $k$,
\[
h(x^k)-h^\star \le L\norm{x^k-\xb},
\]
i.e.,
\[
h(x^k)-h^\star\le C^p_h\rho^k
\]
for a suitable constant $C^p_h>0$.

 Suppose $p>2$. By
In light of Assumption~\ref{ass:perturbed}\ref{ass:perturbed:b}, it is clear that
\[
 r_{k+1}\le a r_k^{p-1}+b\sqrt{\eps_k},
\]
for all sufficiently large $k$. Applying the forcing condition $\sqrt{\eps_k}\le M r_k^{p-1+\delta}$ results in
\[
 r_{k+1} \le a r_k^{p-1}+bM r_k^{p-1+\delta}= \left(a+bM r_k^\delta\right)r_k^{p-1}.
\]
Since $x^k\to\xb$, we have $r_k\to0$. Hence, the factor
$a+bM r_k^\delta$ is bounded for all sufficiently large $k$, i.e., there
exists $B>0$ such that
\[
 r_{k+1}\le B r_k^{p-1},
\]
for all sufficiently large $k$. Dividing both sides by $r_k$ results in
\[
\frac{r_{k+1}}{r_k}\le B r_k^{p-2}\to0,
\]
due to $p>2$ and $r_k\to0$, which consequently leads to a superlinear convergence.

Finally, if $h$ is locally Lipschitz near $\xb$, then for all sufficiently
large $k$,
\[
 h(x^k)-h^\star \le C_h^p\norm{x^k-\xb},
\]
after increasing $C_h^p$ if necessary.
\end{proof}

\begin{remark}[\textbf{Sharper value rates under local upper quadratic growth}]
\label{rem:upper-quadratic-value-rates}
If, in addition to the assumptions of Theorem~\ref{thm:conv-rates}, there exists $L_{\rm uq}>0$ such that
\[
h(x)-h^\star\le L_{\rm uq}\norm{x-\xb}^2
\]
locally around $\xb$, then the value estimates in Theorem~\ref{thm:conv-rates} can be sharpened. In the case $p\in(1,2]$, one
gets
\[
h(x^k)-h^\star\le \widetilde C_h\rho^{2k},
\]
for all sufficiently large $k$. In the case $p>2$, one gets
\[
        h(x^k)-h^\star\le L_{\rm uq}\norm{x^k-\xb}^2,
\]
for all sufficiently large $k$.
\end{remark}

\begin{remark}\label{rem:p>2}
For $p\in(1,2]$, Theorem~\ref{thm:conv-rates} gives $R$-linear convergence
of the iterates. For $p>2$, it gives
\[
\frac{\norm{x^{k+1}-\xb}}{\norm{x^k-\xb}}\to0,
\]
and hence the convergence is superlinear. In particular, the superlinear case is
eventually faster than any prescribed linear rate.
\end{remark}

The following complexity bounds count only outer proximal iterations. They do not include the cost of solving the proximal subproblems, which depends on the inner method, the requested inexactness level, and the availability of computable certificates. Thus, these bounds should be interpreted as outer-iteration complexity estimates, not as total oracle or wall-clock complexity bounds.
\begin{theorem}[\textbf{Outer-iteration complexity}]\label{th:complexity}
Define, for $\epsilon>0$,
\[
        N_r(\epsilon):=\min\left\{N\in\Nz:\ \|x^k-\xb\|\le \epsilon \ \text{for all } k\ge N\right\}.
\]
Under the assumptions of Theorem~\ref{thm:conv-rates}, the following outer-iteration complexity bounds hold:
\begin{equation}\label{eq:complexity-summary-distance}
N_r(\epsilon)=
\begin{cases}
\mathcal O \left(\log(\epsilon^{-1})\right),
& 1<p\le 2,\quad \varepsilon_k=\mathcal O(\rho^{2k}),\\[2mm]
\mathcal O \left(\log\log(\epsilon^{-1})\right),
& p>2,\quad
\varepsilon_k=\mathcal O\left(r_k^{2p-2+2\delta}\right),
\end{cases}
\end{equation}
where $\rho\in(0,1)$ and $\delta>0$ are the constants appearing in the
corresponding forcing rules.
If, in addition, $h$ is locally Lipschitz near $\xb$, then the same outer-iteration orders guarantee
\[
       h(x^k)-h(\xb)\le \epsilon .
\]
Equivalently, if
\[
        N_g(\epsilon):=\min\left\{N\in\mathbb N:\ h(x^k)-h(\xb)\le \epsilon\ \text{for all } k\ge N\right\},
\]
then $N_g(\epsilon)$ satisfies the same bounds as $N_r(\epsilon)$, up to modified constants.
\end{theorem}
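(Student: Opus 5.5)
The plan is to read both complexity bounds directly off the local rate estimates in Theorem~\ref{thm:conv-rates}, handling the two regimes separately and then transferring the distance bounds to function values.

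\emph{Case $1<p\le 2$.} Theorem~\ref{thm:conv-rates} provides $k_1$ and $A>0$ with $\|x^k-\xb\|\le A\rho^k$ for all $k\ge k_1$. It therefore suffices that $A\rho^k\le\epsilon$, which holds once $k\ge \log(A/\epsilon)/\log(1/\rho)$. Hence
\[
N_r(\epsilon)\le \max\Big\{k_1,\Big\lceil \tfrac{\log(A/\epsilon)}{\log(1/\rho)}\Big\rceil\Big\}=\mathcal O(\log(\epsilon^{-1})).
\]

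\emph{Case $p>2$.} The forcing rule $\varepsilon_k=\mathcal O(r_k^{2p-2+2\delta})$ is exactly $\sqrt{\varepsilon_k}\le M r_k^{p-1+\delta}$, so Theorem~\ref{thm:conv-rates} gives $r_{k+1}\le B r_k^{p-1}$ for all $k\ge k_1$. Set $q:=p-1>1$ and $s_k:=B^{1/(q-1)}r_k$. Then the recursion becomes $s_{k+1}\le s_k^{q}$. Since $r_k\to0$, I can enlarge $k_1$ so that $s_{k_1}\le\theta<1$. Induction then gives $s_{k_1+j}\le \theta^{q^j}$, that is,
\[
r_{k_1+j}\le B^{-1/(q-1)}\theta^{q^j}.
\]
Requiring the right-hand side to be at most $\epsilon$ amounts to $q^j\ge \log(B^{-1/(q-1)}/\epsilon)/\log(1/\theta)$, i.e.\ $j\ge \log\log(\epsilon^{-1})/\log q+\mathcal O(1)$. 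The same bound holds for all later $j$ by monotonicity of $\theta^{q^j}$. This gives $N_r(\epsilon)=\mathcal O(\log\log(\epsilon^{-1}))$. The only delicate step is choosing $k_1$ so that $s_{k_1}<1$; this is where the convergence $x^k\to\xb$ assumed in Theorem~\ref{thm:conv-rates} enters.

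\emph{Function values.} If $h$ is locally Lipschitz near $\xb$ with constant $L$, then $h(x^k)-h^\star\le L r_k$ for large $k$. Hence $N_g(\epsilon)\le \max\{k_2,N_r(\epsilon/L)\}$, where $k_2$ is the index after which the iterates stay in the Lipschitz neighborhood. Replacing $\epsilon$ by $\epsilon/L$ only changes constants inside $\log$ and $\log\log$, so both orders are preserved.
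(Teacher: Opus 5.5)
Your proposal is correct and follows essentially the same route as the paper: the linear regime is read off from $r_k\le A\rho^k$. The superlinear regime uses the rescaling $s_k=B^{1/(q-1)}r_k$, which turns $r_{k+1}\le B r_k^{p-1}$ into $s_{k+1}\le s_k^{q}$ with $s_{k_1}<1$; this is exactly the paper's $R_j=C^{1/(s-1)}r_{k_0+j}$ device. Function values are handled by the local Lipschitz bound $h(x^k)-h^\star\le L r_k$. Your remarks on the monotonicity of $\theta^{q^j}$ (covering all $k\ge N$) and on $N_g(\epsilon)\le\max\{k_2,N_r(\epsilon/L)\}$ spell out steps the paper leaves implicit.
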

\begin{proof}
Set $r_k:=\|x^k-\xb\|$ and $g_k:=h(x^k)-h(\xb)$. We consider the two regimes separately.
First, let $1<p\leq2$. By Theorem~\ref{thm:conv-rates}, under the forcing condition $\eps_k=\mathcal O(\rho^{2k})$, with $\rho\in(0,1)$, there exist constants $C_1>0$ and $k_0\in\mathbb N_0$ such that $r_k\le C_1 \rho^k$, for all $ k\ge k_0$. Thus, $r_k\le\epsilon$ is guaranteed whenever $ C_1 \rho^k\le \epsilon$. Since $\rho\in(0,1)$, this is equivalent to
\[
k \ge \frac{\log(C_1/\epsilon)}{\log(1/\rho)},
\]
leading to $N_r(\epsilon) =  \mathcal O\!\left(\log(\epsilon^{-1})\right)$.

Now, let $p>2$. By Theorem~\ref{thm:conv-rates}, under the forcing condition $\varepsilon_k =\mathcal O\left(r_k^{2p-2+2\delta}\right)$,
there exist constants $C>0$ and $k_0\in\mathbb N$ such that
\[
        r_{k+1}\le C r_k^{p-1} \qquad \forall  k\ge k_0.
\]
Set $s:=p-1>1$. Since $r_k\to0$, increasing $k_0$ if necessary, we may assume that
\[
R_0:=C^{1/(s-1)}r_{k_0}<1.
\]
For $j\ge0$, let us define
\[
        R_j:=C^{1/(s-1)}r_{k_0+j},
\]
i.e.,
\[
R_{j+1}=C^{1/(s-1)}r_{k_0+j+1}\le C^{1/(s-1)}C r_{k_0+j}^{s}=\left(C^{1/(s-1)}r_{k_0+j}\right)^s=R_j^s.
\]
By induction, it holds that $R_j\le R_0^{s^j}$, which ensures
\[
        r_{k_0+j}\le C^{-1/(s-1)}R_0^{s^j}.
\]
To guarantee $r_{k_0+j}\le\epsilon$, it is enough that
\[
        C^{-1/(s-1)}R_0^{s^j}\le \epsilon,
\]
equivalently,
\[
        R_0^{s^j}\le C^{1/(s-1)}\epsilon.
\]
Taking logarithms and using $R_0\in(0,1)$, it suffices that
\[
 s^j\ge\frac{\log\left(C^{-1/(s-1)}\epsilon^{-1}\right)}{\log(R_0^{-1})},
\]
i.e.,
\[
 j\ge\frac{\log\log\left(C^{-1/(s-1)}\epsilon^{-1}\right)-\log\log(R_0^{-1})}{\log s}.
\]
Consequently, $N_r(\epsilon)=\mathcal O\!\left(\log\log(\epsilon^{-1})\right)$. This proves the distance-complexity estimates in
\eqref{eq:complexity-summary-distance}.

Now, we move from distance bounds to function-value bounds. 
Assume that $h$ is locally Lipschitz near $\xb$. Then there exist a neighborhood $U$ of $\xb$ and a constant $L_{\rm loc}>0$ such that
\[
        |h(x)-h(y)|\le L_{\rm loc}\|x-y\|,  \qquad x,y\in U.
\]
Since $x^k\to\xb$, for all sufficiently large $k$, both $x^k$ and $\xb$ belong to $U$. Therefore,
taking
$y=\xb$, we get
\[
        g_k=h(x^k)-h(\xb)\le L_{\rm loc}r_k
\]
for all sufficiently large $k$. Hence, $g_k\le\epsilon$ is guaranteed by
requiring
\[
        r_k\le \frac{\epsilon}{L_{\rm loc}},
\]
i.e., $N_g(\epsilon)$ has the same order as $N_r(\epsilon)$, with only a change in constants. This proves the stated function-value complexity under local Lipschitz continuity. This completes the proof.
\end{proof}

\section{Loss-level robustness under replacement corruption}
\label{sec:loss-robustness}

We evaluate whether the stabilized capped residual loss from Proposition~\ref{prop:stabilized-capped-loss} provides a robust feature-alignment model.
To isolate the effect of the loss from the effect of the optimizer, all losses are trained with Adam \cite{KingmaBa2015} using the same initialization, learning rate, number of iterations, corrupted training data, and random seeds. Thus, the only component changed across methods is the loss function.

We consider the synthetic realizable feature-alignment model $Z_t^{\rm clean}=Z_s W_\star^\top$, where $Z_s\in\mathbb R^{N\times d_s}$, $Z_t^{\rm clean}\in\mathbb R^{N\times d_t}$,
and $W_\star\in\mathbb R^{d_t\times d_s}$. To model an unreliable teacher
supervision, we use replacement-type corruption: for a fraction
$\rho_{\rm corr}$ of samples, the correct teacher feature is replaced by a
teacher feature from another sample,
\[
    z_{t,i}^{\rm train}=z_{t,j}^{\rm clean},\qquad j\neq i.
\]
Unlike additive Gaussian noise, replacement corruption produces plausible but mismatched teacher representations. This models unreliable paired supervision in feature-alignment pipelines, a setting related to feature-based distillation \cite{Romero2015FitNets} and noisy-pair representation learning \cite{Jiang2024NoisyPairs,song2022learning}.

For a residual vector $r$, our proposed loss is
\[
    \rho_{\rm SQC}(r)=\min\{\|r\|,\tau\}+\mu\|r\|^2.
\]
In the experiments, we set $\mu=\mu_\tau/\tau$.
The capped term limits the influence of mismatched teacher features, while the quadratic term provides stabilization and preserves the strong quasar-convex geometry proved in Proposition~\ref{prop:stabilized-capped-loss}. We compare the proposed loss with the squared loss
\[
    \rho_{\rm MSE}(r)=\frac12\|r\|^2,
\]
the Huber loss~\cite{Huber1964},
\[
\rho_{\rm Hub}(r)=\begin{cases}
\frac12\|r\|^2, & \|r\|\le \delta,\\
\delta\bigl(\|r\|-\frac12\delta\bigr), & \|r\|>\delta,
\end{cases}
\]
and three smooth robust losses commonly used in robust estimation: Pseudo-Huber (PH), Cauchy, and Welsch. These losses are also included in the adaptive robust-loss
family of Barron \cite{Barron2019AdaptiveRobustLoss}:
\[
    \rho_{\rm PH}(r)    =    \delta^2\left( \sqrt{1+\frac{\|r\|^2}{\delta^2}}-1\right),
\]
\[
    \rho_{\rm Cauchy}(r) = \frac{c^2}{2} \log\left(1+\frac{\|r\|^2}{c^2}\right), \qquad
    \rho_{\rm Welsch}(r) = \frac{c^2}{2} \left( 1-\exp\left(-\frac{\|r\|^2}{c^2}\right)\right).
\]
For each robust loss, the scale-selection rule is fixed across corruption levels. Specifically, the scale is chosen as a prescribed quantile of the initial residual norms, and the quantile values were selected beforehand using $\rho_{\rm corr}=0.2$. 
Since the ground-truth matrix $W_\star$ is known, we report the relative recovery error
\[
    \frac{\|W-W_\star\|_F}{\|W_\star\|_F}
\]
as the primary metric. We also report the clean MSE,
\[
    \frac{1}{2N}\|Z_sW^\top-Z_t^{\rm clean}\|_F^2,
\]
which measures alignment error against the uncorrupted teacher features.

Table~\ref{tab:replacement-corruption} shows that the behavior of the losses changes sharply once replacement corruption is introduced. At zero corruption, the smooth quadratic-type losses  achieve the smallest errors, as expected in a realizable clean setting. However, their recovery error deteriorates rapidly as the
corruption ratio increases. In contrast, the proposed stabilized capped loss remains stable across corruption levels and achieves the best recovery among the displayed methods at $\rho_{\rm corr}=0.2$ and $\rho_{\rm corr}=0.5$.

\begin{table}[h]
\centering
\caption{
Replacement-corruption robustness of feature-alignment losses. All losses are optimized with Adam \cite{KingmaBa2015} using the same initialization, learning rate, number of iterations, corrupted training data, and random seeds. 
The robust scale-selection rules are fixed across all corruption levels; scale values are
computed from prescribed quantiles of the initial residual norms. The table
reports the relative recovery error, shown as mean $\pm$ standard deviation over 20 seeds.
}
\label{tab:replacement-corruption}
\resizebox{\linewidth}{!}{
\begin{tabular}{llccc}
\toprule
Loss & Setting & $\rho_{\rm corr}=0$ & $\rho_{\rm corr}=0.2$ & $\rho_{\rm corr}=0.5$\\
\midrule
SQC
& $\tau$-q $=0.80,\ \mu_\tau=0.2$
& $2.968{\times}10^{-3}\pm4.6{\times}10^{-4}$
& $\mathbf{2.729{\times}10^{-3}\pm2.4{\times}10^{-4}}$
& $\mathbf{3.195{\times}10^{-3}\pm2.7{\times}10^{-4}}$
\\
Welsch
& scale-q $=0.50$
& $2.241{\times}10^{-5}\pm6.1{\times}10^{-6}$
& $3.703{\times}10^{-2}\pm3.2{\times}10^{-4}$
& $1.594{\times}10^{-1}\pm6.9{\times}10^{-4}$
\\
Cauchy
& scale-q $=0.50$
& $\mathbf{2.101{\times}10^{-5}\pm1.8{\times}10^{-7}}$
& $8.626{\times}10^{-2}\pm4.6{\times}10^{-4}$
& $3.146{\times}10^{-1}\pm1.4{\times}10^{-3}$
\\
Pseudo-Huber
& scale-q $=0.50$
& $2.188{\times}10^{-5}\pm3.0{\times}10^{-7}$
& $1.400{\times}10^{-1}\pm8.7{\times}10^{-4}$
& $4.242{\times}10^{-1}\pm1.8{\times}10^{-3}$
\\
Huber
& scale-q $=0.50$
& $2.277{\times}10^{-5}\pm2.7{\times}10^{-7}$
& $1.701{\times}10^{-1}\pm9.3{\times}10^{-4}$
& $4.758{\times}10^{-1}\pm1.7{\times}10^{-3}$
\\
MSE
& none
& $2.283{\times}10^{-5}\pm2.7{\times}10^{-7}$
& $2.117{\times}10^{-1}\pm1.4{\times}10^{-3}$
& $5.103{\times}10^{-1}\pm1.9{\times}10^{-3}$
\\
\bottomrule
\end{tabular}
}
\end{table}

The clean-MSE results, reported in Table~\ref{tab:replacement-clean-mse}, show the same trend. In the clean realizable case, the quadratic-type losses achieve the smallest clean MSE. However, under replacement corruption, their clean MSE increases rapidly, indicating that the learned alignment is pulled toward mismatched teacher features. In contrast, the proposed SQC-capped loss maintains nearly constant clean MSE across corruption levels, supporting the claim that the cap reduces the influence of plausible but mismatched teacher representations.

\begin{table}[h]
\centering
\caption{
Clean MSE under replacement corruption. The clean MSE is evaluated against the uncorrupted teacher features. Values are mean $\pm$ standard deviation over
20 seeds.
}
\label{tab:replacement-clean-mse}
\resizebox{\linewidth}{!}{
\begin{tabular}{llccc}
\toprule
Loss & Setting & $\rho_{\rm corr}=0$ & $\rho_{\rm corr}=0.2$ & $\rho_{\rm corr}=0.5$\\
\midrule
SQC-capped
& $\tau$-quantile $=0.80,\ \mu_\tau=0.2$
& $9.336{\times}10^{-6}\pm2.8{\times}10^{-6}$
& $\mathbf{7.975{\times}10^{-6}\pm1.2{\times}10^{-6}}$
& $\mathbf{1.048{\times}10^{-5}\pm1.7{\times}10^{-6}}$
\\
Welsch
& scale quantile $=0.50$
& $5.257{\times}10^{-10}\pm4.0{\times}10^{-10}$
& $1.365{\times}10^{-3}\pm3.8{\times}10^{-5}$
& $2.532{\times}10^{-2}\pm4.5{\times}10^{-4}$
\\
Cauchy
& scale quantile $=0.50$
& $\mathbf{4.278{\times}10^{-10}\pm1.1{\times}10^{-11}}$
& $7.417{\times}10^{-3}\pm1.2{\times}10^{-4}$
& $9.875{\times}10^{-2}\pm1.8{\times}10^{-3}$
\\
Pseudo-Huber
& scale quantile $=0.50$
& $4.640{\times}10^{-10}\pm1.6{\times}10^{-11}$
& $1.955{\times}10^{-2}\pm3.0{\times}10^{-4}$
& $1.796{\times}10^{-1}\pm3.2{\times}10^{-3}$
\\
Huber
& scale quantile $=0.50$
& $5.023{\times}10^{-10}\pm1.3{\times}10^{-11}$
& $2.885{\times}10^{-2}\pm4.5{\times}10^{-4}$
& $2.259{\times}10^{-1}\pm3.9{\times}10^{-3}$
\\
MSE
& none
& $5.051{\times}10^{-10}\pm1.3{\times}10^{-11}$
& $4.470{\times}10^{-2}\pm6.8{\times}10^{-4}$
& $2.599{\times}10^{-1}\pm4.5{\times}10^{-3}$
\\
\bottomrule
\end{tabular}
}
\end{table}

Overall, these results suggest that the proposed loss is particularly effective for replacement-type non-Gaussian corruption, where teacher features are plausible but mismatched. This is the regime most relevant to unreliable feature supervision in representation alignment. 


\section{Application to robust feature-alignment distillation}\label{sec:NumRes}
We turn to the optimization behavior of the proposed robust feature-alignment objective. Section~\ref{sec:loss-robustness} studies the effectiveness of the proposed loss relative to several alternative losses by fixing the optimizer and varying the loss function. The experiment in this section fixes the loss and compares different algorithms for
minimizing it. All experiments are implemented in Python
and run on a laptop equipped with a 12th Gen Intel\textsuperscript{\textregistered}
Core\textsuperscript{\texttrademark} i7-12800H CPU at 1.80 GHz and 16 GB of RAM.

All methods are applied to the same corrupted teacher features and the same robust objective
\[
 h_{\mathrm{KD}}(W) =\frac{1}{N}\sum_{i=1}^{N} \Phi_{\tau,\mu}\left(Wz_{s,i}-z_{t,i}^{\mathrm{train}}\right),
\]
where $\Phi_{\tau,\mu}(r)=\min\{\|r\|,\tau\}+\mu \|r\|^2$.
The $\tau$ equals to the $70$-th percentile of the initial residual norms and $\mu=\mu_\tau/\tau$, where $\mu_\tau=0.05$.
For each random seed, the data, the clean teacher features, the corrupted teacher features, the ground-truth matrix $W_\star$, the initial point $W^0$,
and the parameters $\tau$ and $\mu$ are generated once and then passed to all algorithms. Thus, all methods start from exactly the same initial matrix and
optimize exactly the same objective for each seed. In this experiment, $W^0$ is a random matrix generated with the scale $0.2$.

We compare Adam \cite{KingmaBa2015} (Adam-$h_{\mathrm{KD}}$), the subgradient method with a constant step-size \cite{rahimi2024projected,Shor1985} (SubGD-$h_{\mathrm{KD}}$), and the proposed inexact HiPPA with $p=1.5$, $p=2$, and $p=3$.
The methods Adam (with learning rate $\eta = 10^{-3}$) and SubGD (with step-size $\eta = 3\times 10^{-1}$) run for at most $2000$ iterations. The HiPPA methods run for at most $80$ outer iterations, and each proximal subproblem is solved inexactly using $25$ Adam inner iterations together with an acceptance and retry rule based on the decrease of the proximal model $Q_k$.
Denoting $\eta_{\mathrm{in}}$ as the learning rate used by the inner Adam solver for the proximal subproblem, we implemented HiPPA-$p=1.5$ (with $\beta = 8$ and $\eta_{\mathrm{in}} = 3\times 10^{-3}$), HiPPA-$p=2$  (with $\beta = 10$ and $\eta_{\mathrm{in}} = 3\times 10^{-3}$), and
HiPPA-$p=3$: (with $\beta = 30$ and $\eta_{\mathrm{in}} = 3\times 10^{-3}$).

All methods are stopped once the relative recovery error satisfies
$\operatorname{RelErr}:= \frac{\|W-W_\star\|_F}{\|W_\star\|_F} \le 10^{-4}$,
or when the maximum iteration budget is reached. Since the problem is synthetic, we evaluate the methods using the relative
recovery error $ \operatorname{RelErr}$, the clean MSE $ \frac{1}{2N}\sum_{i=1}^{N}\|Wz_{s,i}-z_{t,i}^{\mathrm{clean}}\|^2$,
and the clean robust objective $h_{\mathrm{KD}}(W;Z_t^{\mathrm{clean}})$. We also report the elapsed time.

Table~\ref{tab:algorithm-comparison} reports the results averaged over $20$ independent random seeds. Adam and SubGD do not reach the prescribed relative-error tolerance within the budget of iterations. In contrast, all three HiPPA variants reach the target accuracy for all seeds. Among them, HiPPA with $p=3$ is the fastest. 
\begin{table}[h]
\centering
\caption{
Algorithm comparison for the robust objective $h_{\mathrm{KD}}$ with early
stopping at $\operatorname{RelErr} \le 10^{-4}$. Values are reported
as mean $\pm$ sample standard deviation over $20$ seeds.
}
\label{tab:algorithm-comparison}
\resizebox{\textwidth}{!}{
\begin{tabular}{lcccc}
\toprule
Method
& Rel. $W$-err.
& Clean MSE
& Clean $h_{\mathrm{KD}}$
& Time (s) \\
\midrule
Adam-$h_{\mathrm{KD}}$
& $2.923{\times}10^{-4} \pm 5.8{\times}10^{-5}$
& $8.904{\times}10^{-8} \pm 3.5{\times}10^{-8}$
& $4.113{\times}10^{-4} \pm 7.8{\times}10^{-5}$
& $5.64 \pm 1.08$ \\

SubGD-$h_{\mathrm{KD}}$
& $1.428{\times}10^{-3} \pm 1.2{\times}10^{-5}$
& $2.343{\times}10^{-6} \pm 1.9{\times}10^{-8}$
& $2.123{\times}10^{-3} \pm 7.8{\times}10^{-6}$
& $5.35 \pm 0.87$ \\

HiPPA-$p=1.5$
& $7.387{\times}10^{-5} \pm 7.3{\times}10^{-7}$
& $5.295{\times}10^{-9} \pm 5.6{\times}10^{-11}$
& $1.023{\times}10^{-4} \pm 5.4{\times}10^{-7}$
& $4.09 \pm 0.70$ \\

HiPPA-$p=2$
& $7.201{\times}10^{-5} \pm 5.4{\times}10^{-7}$
& $5.056{\times}10^{-9} \pm 5.1{\times}10^{-11}$
& $9.995{\times}10^{-5} \pm 5.0{\times}10^{-7}$
& $3.19 \pm 0.57$ \\

HiPPA-$p=3$
& $\mathbf{7.191{\times}10^{-5} \pm 6.7{\times}10^{-7}}$
& $\mathbf{5.047{\times}10^{-9} \pm 7.9{\times}10^{-11}}$
& $\mathbf{9.986{\times}10^{-5} \pm 7.8{\times}10^{-7}}$
& $\mathbf{2.48 \pm 0.41}$ \\
\bottomrule
\end{tabular}
}
\end{table}

Figures~\ref{fig:alg-recovery-iter} and~\ref{fig:alg-clean-hkd} show the convergence behavior for the representative seed $123$. 
Figure~\ref{fig:alg-recovery-iter} compares the relative $W$-error and clean MSE as functions of the logged iteration. All methods start from the same
initial value. HiPAA exhibits a delayed but sharp decrease after the initial phase. Among them, HiPPA-$p=3$ reaches the prescribed accuracy in the
fewest logged iterations. Adam decreases faster at the beginning, but then stagnates above the tolerance. SubGD decreases more slowly and stops at a
higher error level.

\begin{figure}[ht]
\centering
\begin{subfigure}{0.48\textwidth}
   \centering
   \includegraphics[width=\textwidth]{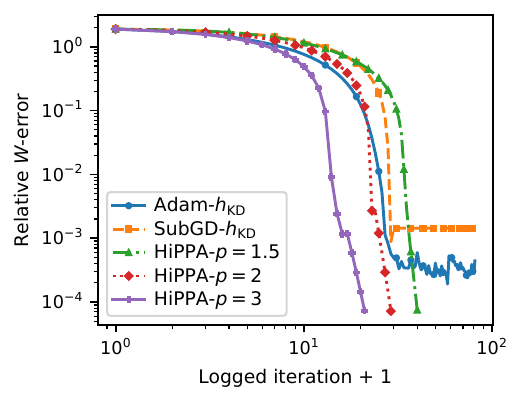}
   \caption{Relative $W$-error}
   \label{fig:alg-rel-w-iter}
\end{subfigure}
\hspace{0.01\textwidth}
\begin{subfigure}{0.48\textwidth}
   \centering
   \includegraphics[width=\textwidth]{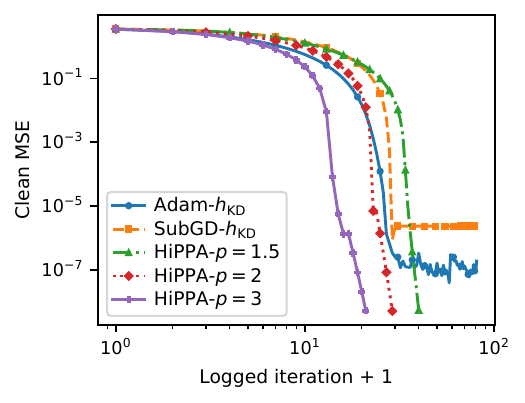}
   \caption{Clean MSE}
   \label{fig:alg-clean-mse-iter}
\end{subfigure}
\caption{Recovery performance on the robust objective $h_{\mathrm{KD}}$ over iterations.}
\label{fig:alg-recovery-iter}
\end{figure}

Figure~\ref{fig:alg-clean-hkd} shows the clean robust objective versus logged iteration and elapsed time. The time plot confirms the computational advantage
of the $p=3$ variant under the stopping rule: HiPPA-$p=3$ reaches the accuracy threshold earlier than the other HiPPA variants and earlier than the
direct methods. Adam and SubGD use the full computational budget without reaching the target relative error, while the HiPPA methods stop earlier after
meeting the prescribed tolerance.

\begin{figure}[ht]
\centering
\begin{subfigure}{0.48\textwidth}
   \centering
   \includegraphics[width=\textwidth]{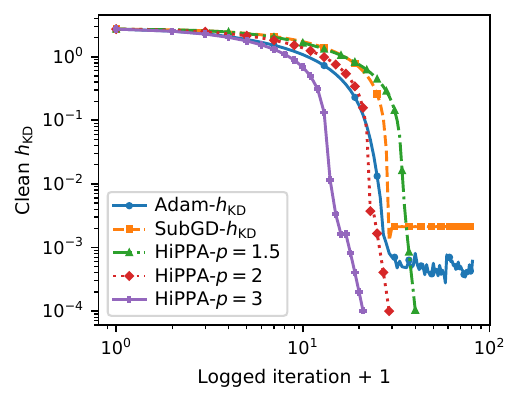}
   \caption{Clean $h_{\mathrm{KD}}$ vs. logged iteration}
   \label{fig:alg-clean-hkd-iter}
\end{subfigure}
\hspace{0.01\textwidth}
\begin{subfigure}{0.48\textwidth}
   \centering
   \includegraphics[width=\textwidth]{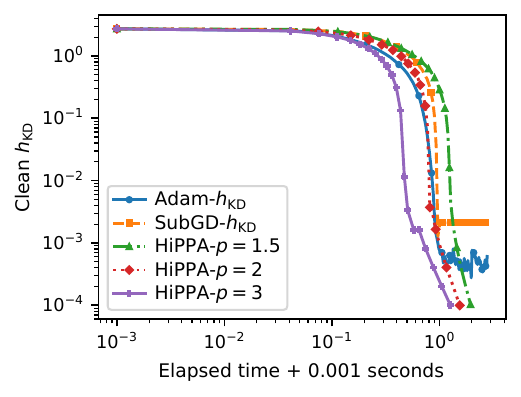}
   \caption{Clean $h_{\mathrm{KD}}$ vs. elapsed time}
   \label{fig:alg-clean-hkd-time}
\end{subfigure}
\caption{Clean robust objective for different algorithms. }
\label{fig:alg-clean-hkd}
\end{figure}

Overall, Adam provides a reasonable baseline but does not reach the target recovery accuracy within the budget, while SubGD is less accurate and fails to meet the stopping tolerance. In contrast, all HiPPA variants achieve the target accuracy across all 20 seeds. Among them, HiPPA-$p=3$ is the most efficient, requiring the fewest gradient evaluations and lowest runtime, highlighting the benefit of higher-order proximal regularization for robust feature-alignment problems.

\section{Discussion}\label{sec:discussion}
We studied efficient proximal-point methods for computing global minimizers of (strongly) quasar-convex objectives in robust learning. We showed that problems arising in robust feature-alignment distillation and anisotropic model stitching are strongly quasar-convex, guaranteeing a unique global minimizer and no saddle points. We then proposed HiPPA, an inexact high-order proximal-point method that controls subproblem accuracy via the model-value gap, and established global convergence with local linear or superlinear rates depending on the regularization order and accuracy criterion. Experiments on robust feature-alignment distillation demonstrate strong empirical performance, highlighting the method’s practical potential.

A key advantage of our high-order proximal-point framework is its flexibility across objective structures, requiring only efficient solution of the associated proximal subproblem. In the nonconvex setting, this subproblem can be solved using methods such as subgradient schemes \cite{davis2018subgradient,rahimi2024projected}, Bregman proximal gradient method \cite{ahookhosh2024high}, or BELLA \cite{ahookhosh2021bregman}. However, it remains unclear which approach is most efficient in practice.

\section*{Acknowledgments}
MA was partially supported by the Research Foundation Flanders (FWO) research project G081222N and UA BOF DocPRO4 projects with ID 46929 and 48996. FL was partially supported by ANID--Chile under project Fondecyt Regular 1241040.

\appendix
\section{Further examples: Strong quasar-convexity beyond local Lipschitzness}
Here, we present a simple mechanism for constructing functions that are nonsmooth, not necessarily locally Lipschitz, nonconvex, and not star-convex, yet strongly quasar-convex on bounded convex sets. The construction is based on a positively homogeneous residual model raised to a fractional power $\theta\in(0,1)$. We first prove a master lemma and then apply it to robust LAD regression, robust multi-task regression, and robust matrix sensing.

\begin{theorem}[\textbf{Master lemma}]\label{thm:master}
Let $K\subset \mb(\ov{x},R)$ be a nonempty convex set with $\ov{x}\in K$, where $R>0$. Let $\theta\in(0,1)$, and let $\psi:K\to[0,\infty)$ satisfy 
\begin{equation}\label{eq:radial-psi}
\psi(\lambda \ov{x}+(1-\lambda)x)=(1-\lambda)\psi(x),\qquad \forall x\in K,\ \forall \lambda\in[0,1],
\end{equation}
and
\begin{equation}\label{eq:psi-lower}
\psi(x)\ge m_\psi\|x-\ov{x}\|,\qquad \forall x\in K,
\end{equation}
for some $m_\psi>0$. Define
\[
h(x):=\psi(x)^\theta.
\]
Then, for every $\kappa\in(0,\theta)$, the function $h$ is $(\kappa,\gamma)$-strongly quasar-convex on $K$ with respect to $\ov{x}$, where
\begin{equation}\label{eq:master-gamma}
\gamma=\frac{2(\theta-\kappa)}{\kappa}\,m_\psi^\theta R^{\,\theta-2}.
\end{equation}
Moreover, if $\psi$ is not identically zero on $K$, then $h$ is not convex and not star-convex with respect to $\ov{x}$. In this case, $h$ is also not locally Lipschitz at $\ov{x}$ relative to $K$.
\end{theorem}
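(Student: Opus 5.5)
Since $h$ may fail to be locally Lipschitz, Fact~\ref{fact:char:clarke} is not available, so I will verify inequality \eqref{eq:qcx-def} of Definition~\ref{def:qcx} directly along segments. Fix $x\in K$ and $\lambda\in[0,1]$, and set $t:=1-\lambda$. By \eqref{eq:radial-psi}, $\psi(\lambda\xb+(1-\lambda)x)=t\psi(x)$, and taking $x=\xb$, $\lambda=0$ gives $\psi(\xb)=\psi(\xb)\cdot$ anything, so in fact $\psi(\xb)=0$ (use $\lambda=1$). Hence $h(\xb)=0=\min_K h$. Writing $a:=\psi(x)^\theta=h(x)$ and $d:=\norm{x-\xb}<R$, the required inequality becomes the scalar claim
\[
t^\theta a\le (1-\kappa\lambda)a-\lambda\Bigl(1-\tfrac{\lambda}{2-\kappa}\Bigr)\tfrac{\kappa\gamma}{2}d^2 .
\]
The plan is to split the slack $(1-\kappa\lambda)-(1-\lambda)^\theta$ and show that it dominates a multiple of $\lambda$, then trade $a$ for $d^2$ using \eqref{eq:psi-lower}.

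For the first step, concavity of $s\mapsto s^\theta$ gives $(1-\lambda)^\theta\le 1-\theta\lambda$. Therefore $(1-\kappa\lambda)-(1-\lambda)^\theta\ge(\theta-\kappa)\lambda$, and it suffices to show
\[
(\theta-\kappa)\lambda\,a\ge \lambda\Bigl(1-\tfrac{\lambda}{2-\kappa}\Bigr)\tfrac{\kappa\gamma}{2}d^2 .
\]
Since $0\le 1-\lambda/(2-\kappa)\le1$, it is enough to have $(\theta-\kappa)a\ge\frac{\kappa\gamma}{2}d^2$. By \eqref{eq:psi-lower}, $a\ge m_\psi^\theta d^\theta$. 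Because $d<R$ and $\theta-2<0$, we have $d^\theta=d^2d^{\theta-2}\ge d^2R^{\theta-2}$. Substituting the value of $\gamma$ in \eqref{eq:master-gamma} yields exactly $\frac{\kappa\gamma}{2}d^2=(\theta-\kappa)m_\psi^\theta R^{\theta-2}d^2\le(\theta-\kappa)a$. This closes the quasar-convexity part; I expect it to be routine.

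For the negative claims, pick $x\in K$ with $\psi(x)>0$. Along the segment, $h(\lambda\xb+(1-\lambda)x)=(1-\lambda)^\theta h(x)$. Since $(1-\lambda)^\theta>1-\lambda$ for $\lambda\in(0,1)$ and $h(x)>0$, this exceeds $\lambda h(\xb)+(1-\lambda)h(x)$, so $h$ is not star-convex with respect to $\xb$, and hence not convex. For non-Lipschitzness, the points $x_t:=\xb+t(x-\xb)\in K$ satisfy
\[
h(x_t)-h(\xb)=t^\theta h(x),
\]
while $\norm{x_t-\xb}=t\norm{x-\xb}$. The ratio behaves like $t^{\theta-1}\to\infty$ as $t\downarrow0$, which rules out any Lipschitz bound near $\xb$ relative to $K$.

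The main subtlety is checking the degenerate points, namely $x=\xb$ and $\lambda\in\{0,1\}$, where the claimed inequality must hold with equality or trivially. The other point to watch is that strict inequality $d<R$ is only needed in its non-strict form $d\le R$.
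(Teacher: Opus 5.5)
Your proposal is correct and follows the paper's proof essentially step for step. Both arguments use the radial identity $h(\lambda\xb+(1-\lambda)x)=(1-\lambda)^\theta h(x)$, the concavity bound $(1-\lambda)^\theta\le 1-\theta\lambda$, the estimate $h(x)\ge m_\psi^\theta R^{\theta-2}\|x-\xb\|^2$ together with $1-\lambda/(2-\kappa)\le 1$, and the $t^\theta>t$ and $t^{\theta-1}\to\infty$ arguments for the negative claims. Your aside about $x=\xb$, $\lambda=0$ is garbled, but the $\lambda=1$ substitution you then invoke does correctly give $\psi(\xb)=0$.
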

\begin{proof}
Fix $x\in K$ and $\lambda\in[0,1]$. By \eqref{eq:radial-psi},
\[
h(\lambda \ov{x}+(1-\lambda)x)=\bigl((1-\lambda)\psi(x)\bigr)^\theta=(1-\lambda)^\theta h(x).
\]
Since $\theta\in(0,1)$, the map $t\mapsto t^\theta$ is concave on $[0,1]$, and hence $(1-\lambda)^\theta\le 1-\theta\lambda$, i.e.,
\begin{equation}\label{eq1:thm:master}
h(\lambda \ov{x}+(1-\lambda)x)\le(1-\theta\lambda)h(x)=(1-\kappa\lambda)h(x)-(\theta-\kappa)\lambda h(x).
\end{equation}
From \eqref{eq:psi-lower}, we obtain
\[
h(x)=\psi(x)^\theta\ge m_\psi^\theta\|x-\ov{x}\|^\theta.
\]
It follows from $x\in K\subset \mb(\ov{x},R)$ and $\theta-2<0$ that
\[
 \|x-\ov{x}\|^\theta=\|x-\ov{x}\|^2\|x-\ov{x}\|^{\theta-2}\ge R^{\theta-2}\|x-\ov{x}\|^2,
\]
i.e.,
\[
 h(x)\ge m_\psi^\theta R^{\theta-2}\|x-\ov{x}\|^2.
\]
Substituting this estimate into \eqref{eq1:thm:master}, it can be concluded that
\[
h(\lambda \ov{x}+(1-\lambda)x)\le(1-\kappa\lambda)h(x)-(\theta-\kappa)\lambda m_\psi^\theta R^{\theta-2}\|x-\ov{x}\|^2.
\]
Due to $1-\frac{\lambda}{2-\kappa}\le 1$, the choice \eqref{eq:master-gamma} implies
\[
(\theta-\kappa)\lambda m_\psi^\theta R^{\theta-2}\|x-\ov{x}\|^2\ge\lambda\left(1-\frac{\lambda}{2-\kappa}\right)\frac{\kappa\gamma}{2}\|x-\ov{x}\|^2,
\]
i.e.,
\[
h(\lambda \ov{x}+(1-\lambda)x)\le(1-\kappa\lambda)h(x)-\lambda\left(1-\frac{\lambda}{2-\kappa}\right)\frac{\kappa\gamma}{2}\|x-\ov{x}\|^2.
\]
Since $h(\ov{x})=\psi(\ov{x})^\theta=0$ by \eqref{eq:radial-psi} with $\lambda=1$, this is exactly the definition of $(\kappa,\gamma)$-strong quasar-convexity.

Now, suppose that $\psi$ is not identically zero on $K$. Then there exists $d$ such that $\ov x+d\in K$ and $\psi(\ov x+d)>0$. It follows from the convexity of
$K$, $\ov x+td\in K$ for every $t\in[0,1]$, and by \eqref{eq:radial-psi},
\[
        h(\ov x+td)=t^\theta h(\ov x+d).
\]
It can be deduced from $\theta\in(0,1)$ that $t^\theta>t$ for every $t\in(0,1)$, i.e., 
\[
h(\ov{x}+td)>t\,h(\ov{x}+d)=(1-t)h(\ov{x})+t\,h(\ov{x}+d),
\]
due to $h(\ov{x})=0$. Thus, $h$ is not convex. The same strict inequality also shows that the star-convex inequality fails, i.e., $h$ is not star-convex with respect to
$\ov{x}$.

Finally, it holds that
\[
\frac{|h(\ov x+td)-h(\ov x)|}{\|td\|}=\frac{t^\theta h(\ov x+d)}{t\|d\|}=\frac{h(\ov x+d)}{\|d\|}t^{\theta-1}\to+\infty
\]
as $t\downarrow0$. Hence $h$ is not locally Lipschitz at $\ov x$ relative to $K$.
\end{proof}

\begin{remark}
All examples below are local in the radius $R$. Theorem~\ref{thm:master} provides strong quasar-convexity on a bounded convex set
$K\subset\mb(\ov x,R)$, not necessarily on the whole ambient space.
\end{remark}

\subsection{Nonconvex robust LAD regression}

\begin{proposition}\label{prop:lad}
Let $A\in\mathbb{R}^{m\times n}$ have the full column rank, let $b=A\ov{x}$, and let $\theta\in(0,1)$. Define
\[
h_1(x):=\|Ax-b\|_1^\theta,
\]
on a nonempty convex set $K\subset \mb(\ov{x},R)$ with $\ov{x}\in K$. Then, for every $\kappa\in(0,\theta)$, the function $h_1$ is $(\kappa,\gamma_1)$-strongly quasar-convex on $K$ with respect to $\ov{x}$, where
\[
\gamma_1=\frac{2(\theta-\kappa)}{\kappa}\,\sigma_{\min}(A)^\theta\,R^{\theta-2}.
\]
Moreover, $h_1$ is not locally Lipschitz at $\ov x$, is nonconvex, and is
not star-convex with respect to $\ov x$.
\end{proposition}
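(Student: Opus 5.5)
The plan is to apply Theorem~\ref{thm:master} with $\psi(x):=\|Ax-b\|_1$ restricted to $K$. This gives $h_1=\psi^\theta$, so only the two hypotheses \eqref{eq:radial-psi} and \eqref{eq:psi-lower} need to be checked. The exponent of $\gamma_1$ should then match \eqref{eq:master-gamma} with $m_\psi=\sigma_{\min}(A)$.

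\textbf{Radial homogeneity.} Since $b=A\ov x$, we have $Ax-b=A(x-\ov x)$. For $\lambda\in[0,1]$,
\[
A(\lambda\ov x+(1-\lambda)x)-b=(1-\lambda)A(x-\ov x).
\]
Positive homogeneity of $\|\cdot\|_1$ then yields $\psi(\lambda\ov x+(1-\lambda)x)=(1-\lambda)\psi(x)$. Convexity of $K$ and $\ov x\in K$ ensure the interpolated points stay in $K$. Also $\psi\ge0$.

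\textbf{Lower bound.} We use $\|v\|_1\ge\|v\|_2$ for every $v\in\R^m$. Full column rank gives $\|A(x-\ov x)\|_2\ge\sigma_{\min}(A)\|x-\ov x\|$ with $\sigma_{\min}(A)>0$. Hence $\psi(x)\ge\sigma_{\min}(A)\|x-\ov x\|$, which is \eqref{eq:psi-lower} with $m_\psi=\sigma_{\min}(A)$. Theorem~\ref{thm:master} then gives, for every $\kappa\in(0,\theta)$, that $h_1$ is $(\kappa,\gamma_1)$-strongly quasar-convex with $\gamma_1$ exactly as stated.

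\textbf{Negative properties.} Nonconvexity, failure of star-convexity, and failure of local Lipschitzness at $\ov x$ (relative to $K$) come from the last part of Theorem~\ref{thm:master}. That part requires $\psi$ not to vanish identically on $K$. This is the only delicate point, since it needs $K\neq\{\ov x\}$. If $K$ contains some $x\ne\ov x$, then injectivity of $A$ gives $\psi(x)>0$. So I would note that these conclusions presuppose $K$ nontrivial, which is implicit in the statement, e.g.\ when $K$ is a ball around $\ov x$. In that case non-Lipschitzness also holds in the ambient sense.
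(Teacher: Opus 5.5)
Your proposal is correct and matches the paper's proof: set $\psi_1(x)=\|A(x-\ov{x})\|_1$, verify \eqref{eq:radial-psi} via $b=A\ov{x}$, obtain the lower bound from $\|\cdot\|_1\ge\|\cdot\|_2\ge\sigma_{\min}(A)\|\cdot\|$, and invoke Theorem~\ref{thm:master} with $m_\psi=\sigma_{\min}(A)$. Your caveat that the negative conclusions, taken relative to $K$, need $K\neq\{\ov{x}\}$ is a fair point the paper leaves implicit; for $h_1$ viewed on all of $\R^n$ they hold for every $K$, by the paper's observation $h_1(\ov{x}+td)=t^\theta h_1(\ov{x}+d)$ for any $d$ with $Ad\neq0$.
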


\begin{proof}
Let us define
\[
\psi_1(x):=\|A(x-\ov{x})\|_1,
\]
i.e.,
\[
\psi_1(\lambda \ov{x}+(1-\lambda)x)=(1-\lambda)\psi_1(x),
\qquad \forall x\in K,\ \forall \lambda\in[0,1].
\]
In addition,
\[
\psi_1(x)\ge \|A(x-\ov{x})\|_2\ge \sigma_{\min}(A)\|x-\ov{x}\|_2.
\]
Consequently, Theorem~\ref{thm:master} applies to $m_\psi=\sigma_{\min}(A)$.

The nonsmoothness at $\ov{x}$ is immediate from $h_1(\ov{x}+td)=t^\theta h_1(\ov{x}+d)$,
for every direction $d$ such that $A d\neq 0$.
\end{proof}

\subsection{Nonconvex robust multi-task regression}

Let $x_i\in\mathbb{R}^d$ and $W^\ast\in\mathbb{R}^{p\times d}$, and let us set $y_i=W^\ast x_i\in\mathbb{R}^p$. Here, we equip $\mathbb{R}^{p\times d}$ with the Frobenius norm $\|\cdot\|_F$.

\begin{proposition}\label{prop:multitask}
Let $X=[x_1,\dots,x_N]\in\mathbb{R}^{d\times N}$ have full row rank, let $\theta\in(0,1)$, and define
\[
h_2(W):=\left(\frac{1}{N}\sum_{i=1}^N \|W x_i-y_i\|_2\right)^\theta,
\]
on a nonempty convex set
\[
K\subset \{W\in\mathbb{R}^{p\times d}:\|W-W^\ast\|_F\le R\}, \qquad W^\ast\in K.
\]
Then, for every $\kappa\in(0,\theta)$, the function $h_2$ is $(\kappa,\gamma_2)$-strongly quasar-convex on $K$ with respect to $W^\ast$, where
\[
\gamma_2=\frac{2(\theta-\kappa)}{\kappa}\left(\frac{\sigma_{\min}(X)}{N}\right)^\theta R^{\theta-2}.
\]
Moreover, $h_2$ is not locally Lipschitz at $W^\star$, is nonconvex, and is not star-convex with respect to $W^\star$.
\end{proposition}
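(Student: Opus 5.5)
The plan is to apply Theorem~\ref{thm:master} with $\ov x=W^\ast$ and
\[
\psi_2(W):=\frac1N\sum_{i=1}^N\|Wx_i-y_i\|_2 .
\]
Then $h_2=\psi_2^\theta$, and it suffices to verify the radial homogeneity \eqref{eq:radial-psi} and the linear lower bound \eqref{eq:psi-lower} with $m_\psi=\sigma_{\min}(X)/N$. The space $\mathbb{R}^{p\times d}$ is Euclidean with the Frobenius norm, so the Master lemma (stated for a general inner-product space with $\mb(\ov x,R)$ the ball of radius $R$) applies verbatim.

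\emph{Homogeneity.} Since $y_i=W^\ast x_i$, we have $Wx_i-y_i=(W-W^\ast)x_i$. For $W_\lambda:=\lambda W^\ast+(1-\lambda)W$ we get $W_\lambda-W^\ast=(1-\lambda)(W-W^\ast)$. Hence each residual is scaled by $(1-\lambda)\ge0$, and $\psi_2(W_\lambda)=(1-\lambda)\psi_2(W)$, which is \eqref{eq:radial-psi}. Also $\psi_2\ge0$ and $\psi_2(W^\ast)=0$.

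\emph{Lower bound.} Set $\Delta:=W-W^\ast$. By the elementary inequality $\sum_i a_i\ge(\sum_i a_i^2)^{1/2}$ for $a_i\ge0$,
\[
\psi_2(W)=\frac1N\sum_{i=1}^N\|\Delta x_i\|_2\ge\frac1N\Big(\sum_{i=1}^N\|\Delta x_i\|_2^2\Big)^{1/2}=\frac1N\|\Delta X\|_F .
\]
Since $X$ has full row rank, $\|\Delta X\|_F^2=\operatorname{tr}(\Delta XX^\top\Delta^\top)\ge\sigma_{\min}(X)^2\|\Delta\|_F^2$, because $XX^\top\succeq\sigma_{\min}(X)^2I_d$; this is the same estimate used in Example~\ref{ex:capped-feature-distillation}. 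Thus $\psi_2(W)\ge\frac{\sigma_{\min}(X)}{N}\|W-W^\ast\|_F$, which is \eqref{eq:psi-lower}. Plugging $m_\psi=\sigma_{\min}(X)/N$ into \eqref{eq:master-gamma} gives exactly $\gamma_2$.

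\emph{Negative properties.} These follow from the second part of Theorem~\ref{thm:master}, provided $\psi_2$ is not identically zero on $K$. The lower bound shows $\psi_2>0$ at every $W\in K\setminus\{W^\ast\}$, so the only thing to check is that $K\neq\{W^\ast\}$. This is the one mild obstacle: the statement implicitly assumes $K$ is nontrivial (if $K=\{W^\ast\}$ these claims are vacuous or false). I would add this explicitly, just as the proof of Proposition~\ref{prop:lad} implicitly uses a direction $d$ with $\ov x+d\in K$. Under that assumption, Theorem~\ref{thm:master} yields nonconvexity, failure of star-convexity with respect to $W^\ast$, and failure of local Lipschitz continuity at $W^\ast$ relative to $K$.
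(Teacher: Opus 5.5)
Your proof is correct and follows the paper's argument essentially verbatim. It uses the same $\psi_2$, the same radial homogeneity coming from $y_i=W^\ast x_i$, the same chain $\psi_2(W)\ge \frac1N\|(W-W^\ast)X\|_F\ge\frac{\sigma_{\min}(X)}{N}\|W-W^\ast\|_F$, and the same appeal to Theorem~\ref{thm:master} with $m_\psi=\sigma_{\min}(X)/N$. Your remark that the negative properties require $K\neq\{W^\ast\}$ is a fair point that the paper leaves implicit; similarly, here $K$ lies in the closed ball while Theorem~\ref{thm:master} is stated for the open ball $\mb(\ov x,R)$, which is harmless because its proof only uses $\|x-\ov x\|\le R$.
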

\begin{proof}
Let us define
\[
\psi_2(W):=\frac{1}{N}\sum_{i=1}^N \|(W-W^\ast)x_i\|_2,
\]
i.e.,
\[
\psi_2(\lambda W^\ast+(1-\lambda)W)=(1-\lambda)\psi_2(W),\qquad \forall W\in K,\ \forall \lambda\in[0,1].
\]
In addition,
\[
\psi_2(W)\ge\frac1N\left(\sum_{i=1}^N \|(W-W^\ast)x_i\|_2^2\right)^{1/2}=\frac1N\|(W-W^\ast)X\|_F\ge\frac{\sigma_{\min}(X)}{N}\|W-W^\ast\|_F.
\]
Consequently, Theorem~\ref{thm:master} applies with
\[
m_\psi=\frac{\sigma_{\min}(X)}{N}.
\]
Again, the nonsmoothness at $W^\ast$ follows from
\[
h_2(W^\ast+tD)=t^\theta h_2(W^\ast+D),
\]
for any matrix direction $D$ such that $DX\neq 0$.
\end{proof}

\subsection{Nonconvex robust matrix sensing}

Let $\mathcal{A}:\mathbb{R}^{m\times n}\to\mathbb{R}^p$ be a linear operator, let $b=\mathcal{A}(X^\ast)$, and let us equip $\mathbb{R}^{m\times n}$ with the Frobenius norm.

\begin{proposition}\label{prop:matrixsensing}
Assume that $\mathcal{A}$ is injective, let $\theta\in(0,1)$, and define
\[
h_3(X):=\|\mathcal{A}(X)-b\|_1^\theta,
\]
on a nonempty convex set
\[
K\subset \{X\in\mathbb{R}^{m\times n}:\|X-X^\ast\|_F\le R\},\qquad X^\ast\in K.
\]
Then, for every $\kappa\in(0,\theta)$, the function $h_3$ is $(\kappa,\gamma_3)$-strongly quasar-convex on $K$ with respect to $X^\ast$, where
\[
\gamma_3=\frac{2(\theta-\kappa)}{\kappa}\,m_{\mathcal A}^\theta\,R^{\theta-2},\qquad m_{\mathcal A}:=\min_{\|U\|_F=1}\|\mathcal A(U)\|_1>0.
\]
Moreover, $h_3$ is not locally Lipschitz at $X^*$, is nonconvex, and is not star-convex with respect to $X^*$.
\end{proposition}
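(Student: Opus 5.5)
We apply Theorem~\ref{thm:master} with $\ov x=X^\ast$ and
\[
\psi_3(X):=\|\mathcal A(X-X^\ast)\|_1 .
\]
This mirrors the proofs of Propositions~\ref{prop:lad} and~\ref{prop:multitask}. Since $b=\mathcal A(X^\ast)$ and $\mathcal A$ is linear, $\mathcal A(X)-b=\mathcal A(X-X^\ast)$, so $h_3=\psi_3^\theta$ on $K$. For the radial identity \eqref{eq:radial-psi}, note that
\[
\lambda X^\ast+(1-\lambda)X-X^\ast=(1-\lambda)(X-X^\ast).
\]
Linearity of $\mathcal A$ and absolute homogeneity of $\|\cdot\|_1$ then give
\[
\psi_3(\lambda X^\ast+(1-\lambda)X)=(1-\lambda)\psi_3(X)
\]
for all $X\in K$ and $\lambda\in[0,1]$. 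Clearly $\psi_3\ge 0$.

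The only step with any content is the linear lower bound \eqref{eq:psi-lower} with constant $m_{\mathcal A}$. First, $m_{\mathcal A}$ is well defined and positive. The map $U\mapsto\|\mathcal A(U)\|_1$ is continuous, and the Frobenius unit sphere in $\mathbb R^{m\times n}$ is compact, so the minimum is attained at some $U_0$ with $\|U_0\|_F=1$. Injectivity of $\mathcal A$ gives $\mathcal A(U_0)\neq0$, hence $m_{\mathcal A}>0$.

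For $X\neq X^\ast$, set $U:=(X-X^\ast)/\|X-X^\ast\|_F$. Homogeneity gives
\[
\psi_3(X)=\|X-X^\ast\|_F\,\|\mathcal A(U)\|_1\ge m_{\mathcal A}\|X-X^\ast\|_F .
\]
The case $X=X^\ast$ is trivial. Theorem~\ref{thm:master} is stated for an abstract norm ball; here we read it in the Euclidean space $\mathbb R^{m\times n}$ with the Frobenius norm, whose proof uses only norm properties. Applying it with $m_\psi=m_{\mathcal A}$ yields $(\kappa,\gamma_3)$-strong quasar-convexity for every $\kappa\in(0,\theta)$, with $\gamma_3$ exactly as in \eqref{eq:master-gamma}.

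For the negative properties, the second part of Theorem~\ref{thm:master} requires $\psi_3\not\equiv0$ on $K$. If $K$ contains a point $X\neq X^\ast$, then injectivity gives $\psi_3(X)>0$. Theorem~\ref{thm:master} then yields that $h_3$ is nonconvex, not star-convex with respect to $X^\ast$, and not locally Lipschitz at $X^\ast$ relative to $K$. This needs the implicit nondegeneracy assumption that $K\neq\{X^\ast\}$, as in Propositions~\ref{prop:lad} and~\ref{prop:multitask}, and I would state it explicitly.

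The main (mild) obstacle is justifying $m_{\mathcal A}>0$. This is handled by the compactness argument plus injectivity. Everything else is a direct verification.
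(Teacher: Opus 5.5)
Your proposal is correct and follows the paper's proof essentially verbatim. Like the paper, you take $\psi_3(X)=\|\mathcal A(X-X^\ast)\|_1$ in Theorem~\ref{thm:master}, check the radial identity by linearity, and obtain $m_{\mathcal A}>0$ and $\psi_3(X)\ge m_{\mathcal A}\|X-X^\ast\|_F$ from compactness of the Frobenius unit sphere plus injectivity. Your remark that the negative properties require $K\neq\{X^\ast\}$ (so that $\psi_3\not\equiv0$ on $K$) is a fair point the paper leaves implicit, as is the harmless switch from the open ball in Theorem~\ref{thm:master} to the closed Frobenius ball here, since that proof only uses $\|X-X^\ast\|_F\le R$.
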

\begin{proof}
Let us define
\[
\psi_3(X):=\|\mathcal{A}(X-X^\ast)\|_1,
\]
i.e.,
\[
\psi_3(\lambda X^\ast+(1-\lambda)X)=(1-\lambda)\psi_3(X),\qquad \forall X\in K,\ \forall \lambda\in[0,1].
\]
Since $\mathcal A$ is injective, the continuous map $U\mapsto \|\mathcal A(U)\|_1$ is strictly positive on the compact unit sphere $\{U:\|U\|_F=1\}$, hence
$m_{\mathcal A}>0$ and
\[
\psi_3(X)\ge m_{\mathcal A}\|X-X^\ast\|_F.
\]
The result follows from Theorem~\ref{thm:master} with $m_\psi=m_{\mathcal A}$.
\end{proof}

\bibliographystyle{siamplain}
\bibliography{references.bib}

@article{ahookhosh2024high,
  title={High-order methods beyond the classical complexity bounds: Inexact high-order proximal-point methods},
  author={Ahookhosh, Masoud and Nesterov, Yurii},
  journal={Mathematical Programming},
  volume = {208},
  pages={365--407},
  year={2024},
  publisher={Springer}
}

@article{ahookhosh2025high,
author = {Ahookhosh, Masoud and Nesterov, Yurii},
title = {High-order methods beyond the classical complexity bounds: Inexact high-order proximal-point methods with segment search},
journal = {Submitted manuscript},
year = {2025},
}

@article{ahookhosh2021bregman,
author = {Ahookhosh, Masoud and Themelis, Andreas and Patrinos, Panagiotis},
title = {A {B}regman forward-backward linesearch algorithm for nonconvex composite optimization: Superlinear convergence to nonisolated local minima},
journal = {SIAM Journal on Optimization},
volume = {31},
pages = {653--685},
year = {2021}
}

@article{Ahookhosh2025Asymptotic,
  author = {Ahookhosh, M. and Iusem, A. and Kabgani, A. and Lara, F.},
  title = {Asymptotic convergence analysis of high-order proximal-point methods beyond sublinear rates},
  year = {2025},
  url = {https://doi.org/10.48550/arXiv.2505.20484},
  journal = {arXiv:2505.20484},
  pages = {}
}

@article{Ahookhosh2026Quasar,
  author        = {Ahookhosh, M. and de Brito, J. and Kabgani, A. and Lara, F. and Yuan, J.},
  title         = {Quasar-convex optimization: Fundamental properties and high-order proximal-point methods},
  year          = {2026},
  journal        = {arXiv: 2604.26735},
  pages = {}
}

@article{barre2023principled,
  title={Principled analyses and design of first-order methods with inexact proximal operators},
  author={Barré, Mathieu and Taylor, Alex B. and Bach, Francis},
  journal={Mathematical Programming},
  volume={201},
  pages={185--230},
  year={2023},
  publisher={Springer}
}

@inproceedings{Barron2019AdaptiveRobustLoss,
  author    = {Barron, J. T.},
  title     = {A general and adaptive robust loss function},
  booktitle = {Proceedings of the IEEE/CVF Conference on Computer Vision and Pattern Recognition},
  pages     = {4331--4339},
  year      = {2019}
}

@article{brito2025extending,
  title={Extending linear convergence of the proximal point algorithm: The quasar-convex case},
  author={de Brito, Jos{\'e} and Lara, Felipe and Liu, Di},
  journal={arXiv preprint arXiv:2509.04375},
  year={2025}
}

@article{Brooks2011Ramp,
  author  = {Brooks, J. P.},
  title   = {Support vector machines with the ramp loss and the hard margin loss},
  journal = {Operations Research},
  volume  = {59},
  number  = {2},
  pages   = {467--479},
  year    = {2011}
}

@article{Csiszarik2021RepresentationMatching,
  author        = {Csisz{\'a}rik, A. and K{\H{o}}r{\"o}si-Szab{\'o}, P. and Matszangosz, {\'A}. and Papp, G. and Varga, D.},
  title         = {Similarity and matching of neural network representations},
  year          = {2021},
  journal        = {arXiv: 2110.14633},
  journal = {}
}

@book{Clarke1990,
author = {Clarke, Frank H.},
title = {Optimization and Nonsmooth Analysis},
publisher = {Society for Industrial and Applied Mathematics},
year = {1990},
}

@article{davis2018subgradient,
  author    = {Davis, Damek and Drusvyatskiy, Dmitriy and MacPhee, Kyle J. and Paquette,Courtney},
  title     = {Subgradient Methods for Sharp Weakly Convex Functions},
  journal   = {Journal of Optimization Theory and Applications},
  year      = {2018},
  volume    = {179},
  pages     = {962--982}
}

@article{duchi2019variance,
  author  = {Duchi, J. and Namkoong, H.},
  title   = {Variance-based regularization with convex objectives},
  journal = {Journal of Machine Learning Research},
  volume  = {20},
  number  = {68},
  pages   = {1--55},
  year    = {2019}
}

@article{Dvurechenskii2022,
  author    = {Dvurechensky, P. E.},
  title     = {A gradient method with inexact oracle for composite nonconvex optimization},
  journal   = {Computer Research and Modeling},
  year      = {2022},
  volume    = {14},
  pages     = {321--334}
}

@article{farzin2025minimization,
  title={Minimisation of quasar-convex functions using random zeroth-order oracles},
  author={Farzin, Amir Ali and Pun, Yuen-Man and Braun, Philipp and Shames, Iman},
  journal={arXiv preprint arXiv:2505.02281},
  year={2025}
}

@article{guminov2023accelerated,
  title={Accelerated methods for weakly-quasi-convex optimization problems: S. Guminov et al.},
  author={Guminov, Sergey and Gasnikov, Alexander and Kuruzov, Ilya},
  journal={Computational Management Science},
  volume={20},
  pages={36},
  year={2023},
  publisher={Springer}
}

@article{Hermant2024Study,
  title={Study of the behaviour of {N}esterov accelerated gradient in a non convex setting: the strongly quasar convex case},
  author={Hermant, Julien and Aujol, Jean-Fran{\c{c}}ois and Dossal, Charles and Rondepierre, Aude},
  journal={arXiv preprint arXiv:2405.19809},
  year={2024}
}

@book{hadjisavvas2006handbook,
  title={Handbook of generalized convexity and generalized monotonicity},
  author={Hadjisavvas, Nicolas and Koml{\'o}si, S{\'a}ndor and Schaible, Siegfried S},
  volume={76},
  year={2005},
  publisher={Springer-Verlag}
}

@article{Hardt2018Gradient,
  author  = {Moritz Hardt and Tengyu Ma and Benjamin Recht},
  title   = {Gradient Descent Learns Linear Dynamical Systems},
  journal = {Journal of Machine Learning Research},
  year    = {2018},
  volume  = {19},
  pages   = {1--44}
}

@InProceedings{Hinder2020Near,
  title = 	 {Near-Optimal Methods for Minimizing Star-Convex Functions and Beyond},
  author =       {Hinder, Oliver and Sidford, Aaron and Sohoni, Nimit},
  booktitle = 	 {Proceedings of Thirty Third Conference on Learning Theory},
  pages = 	 {1894--1938},
  year = 	 {2020},
  editor = 	 {Abernethy, Jacob and Agarwal, Shivani},
  volume = 	 {125},
  series = 	 {Proceedings of Machine Learning Research},
  publisher =    {PMLR}
}

@article{Hu2021LoRA,
  author        = {Hu, E. J. and others},
  title         = {{LoRA}: Low-rank adaptation of large language models},
  year          = {2021},
  journal        = {arXiv: 2106.09685},
  pages = {}
}

@article{Huang2014RampLossSVM,
  author  = {Huang, X. and Shi, L. and Suykens, J. A. K.},
  title   = {Ramp loss linear programming support vector machine},
  journal = {Journal of Machine Learning Research},
  volume  = {15},
  pages   = {2185--2211},
  year    = {2014}
}

@article{Huber1964,
  author  = {Huber, P. J.},
  title   = {Robust estimation of a location parameter},
  journal = {The Annals of Mathematical Statistics},
  volume  = {35},
  number  = {1},
  pages   = {73--101},
  year    = {1964}
}

@article{kabgani2024itsopt,
  author = {Kabgani, A. and Ahookhosh, M.},
  title = {Its{OPT}: An inexact two-level smoothing framework for nonconvex optimization via high-order Moreau envelope},
  year = {2024},
  url = {https://doi.org/10.48550/arXiv.2410.19928},
  journal = {arXiv},
  archiveprefix = {2410.19928}
}

@article{kabgani2025itsdeal,
  author = {Kabgani, A. and Ahookhosh, M.},
  title = {Its{DEAL}: Inexact two-level smoothing descent algorithms for weakly convex optimization},
  year = {2025},
  url = {https://doi.org/10.48550/arXiv.2501.02155},
  journal = {arXiv},
  archiveprefix = {2501.02155}
}

@article{kabgani2025fundamental,
  author = {Kabgani, A. and Ahookhosh, M.},
  title = {On fundamental properties of high-order forward-backward envelope},
  year = {2025},
  url = {https://doi.org/10.48550/arXiv.2511.10421},
  journal = {arXiv},
  archiveprefix = {2511.10421}
}

@article{kabgani2025first,
  author = {Kabgani, A. and Ahookhosh, M.},
  title = {First-order majorization-minimization meets high-order majorant: Boosted inexact high-order forward-backward method},
  year = {2025},
  url = {https://doi.org/10.48550/arXiv.2510.22231},
  journal = {arXiv},
  archiveprefix = {2510.22231}
}

@article{kabgani2025moreau,
  author = {Kabgani, A. and Ahookhosh, M.},
  title = {Moreau Envelope and Proximal-Point Methods Under the Lens of High-Order Regularization},
  year = {2025},
  volume={33, 47},
  journal={Set-Valued and Variational Analysis}
}

@inproceedings{KingmaBa2015,
  author    = {Kingma, D. P. and Ba, J.},
  title     = {Adam: A method for stochastic optimization},
  booktitle = {International Conference on Learning Representations (ICLR)},
  year      = {2015}
}

@article{Lara2022strongly,
  title={On strongly quasiconvex functions: Existence results and proximal point algorithms},
  author={Lara, Felipe},
  journal={Journal of Optimization Theory and Applications},
  volume={192},
  number={3},
  pages={891--911},
  year={2022},
  publisher={Springer}
}

@inproceedings{lee2016optimizing,
  title={Optimizing star-convex functions},
  author={Lee, Jasper CH and Valiant, Paul},
  booktitle={2016 IEEE 57th Annual Symposium on Foundations of Computer Science (FOCS)},
  pages={603--614},
  year={2016},
  organization={IEEE}
}

@article{liu2012implementable,
  author  = {Liu, Y.-J. and Sun, D. and Toh, K.-C.},
  title   = {An implementable proximal point algorithmic framework for nuclear norm minimization},
  journal = {Mathematical Programming},
  volume  = {133},
  pages   = {399--436},
  year    = {2012}
}

@article{Maekawa2026Align,
  author        = {Maekawa, S. and Aminnaseri, M. and Pezeshkpour, P. and Hruschka, E.},
  title         = {Align then train: efficient retrieval adapter learning},
  year          = {2026},
  journal        = {arXiv: 2604.03403},
  pages = {}
}

@article{martinet1970regularisation,
  author = {Martinet, Bernard},
  title = {R\'{e}gularisation d'in\'{e}quations variationnelles par approximations successives},
  journal = {Revue Francaise d'informatique et de Recherche operationelle},
  volume = {4},
  pages = {154--158},
  year = {1970}
}

@article{martinet1972determination,
  author = {Martinet, Bernard},
  title = {D\'{e}termination approch\'{e}e d'un point fixe d'une application pseudo-contractante},
  journal = {Cas de l'application prox,”Comptes Rendus de l'Academie des Sciences, Paris},
  volume = {274},
  pages = {163--165},
  year = {1972}
}

@article{nesterov2006cubic,
  title={Cubic regularization of {N}ewton method and its global performance},
  author={Nesterov, Yurii and Polyak, Boris T},
  journal={Mathematical programming},
  volume={108},
  pages={177--205},
  year={2006},
  publisher={Springer}
}

@article{nesterov2023inexact,
  author = {Nesterov, Yurii},
  title = {Inexact Accelerated High-Order Proximal-Point Methods},
  journal = {Mathematical Programming},
  volume = {197},
  pages = {1--26},
  year = {2023}
}

@article{Pun2024,
  title={Online non-stationary stochastic quasar-convex optimization},
  author={Pun, Yuen-Man and Shames, Iman},
  journal={arXiv preprint arXiv:2407.03601},
  year={2024}
}

@article{rahimi2024projected,
  author        = {Rahimi, M. and Ghaderi, S. and Moreau, Y. and Ahookhosh, M.},
  title         = {Projected subgradient methods for paraconvex optimization: Application to robust low-rank matrix recovery},
  year          = {2024},
  journal        = {arXiv:2501.00427},
  pages = {}
}

@article{rockafellar1976monotone,
author = {Rockafellar, R. Tyrrell},
title = {Monotone Operators and the Proximal Point Algorithm},
journal = {SIAM Journal on Control and Optimization},
volume = {14},
pages = {877-898},
year = {1976}
}

@book{Rockafellar09,
  title={Variational Analysis},
  author={Rockafellar, R. Tyrrell and Wets, Roger J-B.},
  year={2009},
  publisher={Springer Berlin, Heidelberg}
}

@inproceedings{Romero2015FitNets,
  author    = {Romero, A. and Ballas, N. and Kahou, S. E. and Chassang, A. and Gatta, C. and Bengio, Y.},
  title     = {FitNets: Hints for thin deep nets},
  booktitle = {International Conference on Learning Representations},
  year      = {2015}
}

@article{Salzo12,
  author = {Salzo, Saverio and Villa, Stefano},
  title = {Inexact and Accelerated Proximal Point Algorithms},
  journal = {Journal of Convex Analysis},
  volume = {19},
  pages = {1167--1192},
  year = {2012}
}

@book{Shor1985,
  author    = {Shor, N. Z.},
  title     = {Minimization Methods for Non-Differentiable Functions},
  publisher = {Springer},
  year      = {1985}
}

@article{Solodov01,
author = {M. V. Solodov and B. F. Svaiter},
title = {A UNIFIED FRAMEWORK FOR SOME INEXACT PROXIMAL POINT ALGORITHMS},
journal = {Numerical Functional Analysis and Optimization},
volume = {22},
pages = {1013--1035},
year = {2001}
}

@article{song2022learning,
  author  = {Song, H. and Kim, M. and Park, D. and Shin, Y. and Lee, J. G.},
  title   = {Learning from noisy labels with deep neural networks: A survey},
  journal = {IEEE Transactions on Neural Networks and Learning Systems},
  volume  = {34},
  number  = {11},
  pages   = {8135--8153},
  year    = {2023}
}

@inproceedings{sra2012scalable,
  author    = {Sra, S.},
  title     = {Scalable nonconvex inexact proximal splitting},
  booktitle = {Advances in Neural Information Processing Systems},
  volume    = {25},
  pages     = {539--547},
  year      = {2012}
}

@article{wang2023continuized,
  title={Continuized acceleration for quasar convex functions in non-convex optimization},
  author={Wang, Jun-Kun and Wibisono, Andre},
  journal={arXiv preprint arXiv:2302.07851},
  year={2023}
}

@article{Wang2019CappedL1,
  author  = {Wang, C. and Ye, Q. and Luo, P. and Ye, N. and Fu, L.},
  title   = {Robust capped {$L_1$}-norm twin support vector machine},
  journal = {Neural Networks},
  volume  = {114},
  pages   = {47--59},
  year    = {2019}
}

@article{Yang2023CappedL1PTSVM,
  author  = {Yang, L. and others},
  title   = {Robust capped {L1}-norm projection twin support vector machine},
  journal = {Journal of Industrial and Management Optimization},
  year    = {2023}
}

\end{document}